\documentclass[a4paper, 11pt]{article}
\usepackage[utf8]{inputenc}
\usepackage[T1]{fontenc}
\usepackage{tgpagella}
\usepackage{amsmath, amssymb, amsthm, bm, graphicx, mathdots, xcolor}
\usepackage[textwidth = 14cm]{geometry}
\usepackage[pdfa, hidelinks, colorlinks = true, allcolors=blue]{hyperref}
\DeclareMathOperator{\ILP}{ILP}
\DeclareMathOperator{\rank}{rank}
\DeclareMathOperator{\Ima}{Im}
\DeclareMathOperator{\diag}{diag}
\DeclareMathOperator{\slope}{slope}
\DeclareMathOperator{\slopematrix}{Slope}

\newcommand{\R}{\mathbb{R}}
\newcommand{\Z}{\mathbb{Z}}
\newcommand{\N}{\mathbb{N}}

\newcommand{\bA}{\bm{A}}
\newcommand{\bB}{\bm{B}}
\newcommand{\bC}{\bm{C}}
\newcommand{\bD}{\bm{D}}
\newcommand{\bx}{\bm{x}}

\newcommand{\bc}{\bm{c}}
\newcommand{\bb}{\bm{b}}

\newcommand{\be}{\bm{e}}
\newcommand{\bp}{\bm{p}}
\newcommand{\bq}{\bm{q}}
\newcommand{\br}{\bm{r}}
\newcommand{\bs}{\bm{s}}
\newcommand{\bt}{\bm{t}}
\newcommand{\bv}{\bm{v}}
\newcommand{\bw}{\bm{w}}
\newcommand{\bu}{\bm{u}}
\newcommand{\bz}{\bm{z}}
\newcommand{\balpha}{\bm{\alpha}}

\newcommand{\bM}{\bm{M}}
\newcommand{\bN}{\bm{N}}
\newcommand{\bT}{\bm{T}}
\newcommand{\bR}{\bm{R}}
\newcommand{\bU}{\bm{U}}
\newcommand{\bV}{\bm{V}}
\newcommand{\indet}{x}
\newcommand{\intvar}{a}

\newcommand{\polyring}{\Z[\indet]}
\newcommand{\polyS}{S}
\newcommand{\polyM}{\bM}
\newcommand{\id}{\bm{I}}
\newcommand{\zero}{\bm{0}}

\newtheorem{theorem}{Theorem}

\newtheorem{definition}{Definition}
\newtheorem{lemma}{Lemma}
\newtheorem{corollary}{Corollary}

\newtheorem{remark}{Remark}
\newtheorem{example}{Example}

\author{Marcel Celaya\thanks{Cardiff University, School of Mathematics, Wales, United Kingdom, \texttt{celayam@cardiff.ac.uk}} \and Stefan Kuhlmann\thanks{ETH Zürich, Department of Mathematics, Institute for Operations Research, Switzerland, \texttt{\{stefan.kuhlmann,robert.weismantel\}@ifor.math.ethz.ch}} \and Robert Weismantel\footnotemark[2]}

\title{Polynomial Matrices in Integer Programming With Restricted Subdeterminants}

\date{ }

\begin{document}
	\maketitle

	\begin{abstract}
		We introduce a framework for tackling questions in discrete optimization associated with parametric constraint matrices. More precisely, the constraint matrices have entries that are polynomials in one variable and all subdeterminants of these matrices are polynomials in a given prescribed set $S$. 
        Two key problems arise in this context. The first is the recognition problem: can a matrix of this form be recognized in polynomial time? The second is the optimization problem: given an integer program whose constraint matrix is of this form, can it be solved in polynomial time? We answer both questions affirmatively for a particular set $S$ consisting of nine linear forms. The matrices we consider are of themselves independent interest; they arise as matrix projections of certain bimodular matrices that admit two distinct unimodular projections.	
		This paper is an extension of a conference proceedings version by the same authors that was presented at IPCO 2024.
	\end{abstract}

	\medskip
	\noindent\textbf{Keywords.} integer programming, subdeterminants, totally unimodular matrices, bimodular matrices, polynomial matrices, matroids

	\noindent\textbf{MSC 2020.} 90C10, 15A15, 05B35, 90C27, 15B36
	\medskip

	\section{Introduction}
	
	For a matrix $\bM$, let $\Delta(\bM)$ denote the maximal absolute value of a subdeterminant of $\bM$. It has long been known that the number $\Delta(\bM)$ plays a crucial role in understanding complexity questions related to integer programming problems whose associated constraint matrix is $\bM$. Important examples include bounds on the diameter of polyhedra \cite{bonisummaeisenbranddiameterpoly14,dadushhahnle2016shadowsimplex,narayananshahsri2021spectraldiameter}, questions about the proximity between optimal LP solutions and optimal integral solutions \cite{alievhenkoertel2020knapsackprox,celayakuhlpaatweis2023proxandflatness,CGST1986,lee2020improving}, and bounds on the size of the minimal support of integer vectors in standard form programs \cite{alievAverkovLoeraOertel21,alievdeloera2018supportintegeroptimal,alievdeloesparelindio2017,gribanov2024delta,svp_delta}. 
	Largely unexplored however remain two fundamental algorithmic questions: Given a finite set $S\subseteq \Z$ of allowed values for the subdeterminants of the matrix $\bM$:
	\begin{enumerate} 
		\item What is the complexity of solving an integer linear program with associated constraint matrix $\bM$ in terms of $\left| S\right|$ and $\max_{s\in S} |s|$? (Optimization problem)
		\item How can we efficiently verify whether a matrix $\bM$ has all its subdeterminants in  $S$? (Recognition problem)
	\end{enumerate}
	If $S = \pm\lbrace 0,1\rbrace$, both questions have been answered. Indeed, a famous theorem of Hoffmann and Kruskal \cite{hoffkruskal1956tustatement} and a decomposition theorem of Seymour for totally unimodular matrices \cite{SEYMOUR1980305} allow us to tackle both questions. We refer to \cite{schrijvertheorylinint86} for further theory concerning totally unimodular matrices. If $S=\pm\lbrace 0,1,2\rbrace$, there exists a polynomial time algorithm to solve Question 1 \cite{artweiszenbimodalgo2017,veselovchirkovbimodular09}. In this generality, further optimization results are not known to us. However, more polynomial time algorithms exist under specific matrix restrictions: for instance, if one assumes that each row of the constraint matrix contains at most two non-zero entries \cite{two_nonzeros_per_row,two_non_zeros_most_rows}, or when the constraint matrix is the transpose of a network matrix with a constant number of additional rows \cite{nearly_tu}. A randomized polynomial time algorithm for the related integer feasibility question can be derived if the constraint matrix is $\pm\lbrace 0,\Delta\rbrace$-modular for $\Delta\leq 4$ \cite{naegele2024advancesstrictlydelta,naegelesanzencongruence2023}. There are also polynomial time algorithms for the optimization and recognition problem if the constraint matrix is $\pm\lbrace a,b,c\rbrace$-modular \cite{glanzer2022notes} for $a,b,c \in \N$.  
	
	The main reason why there are only few general results on optimization and recognition problems is due to the lack of understanding of integral matrices with bounded subdeterminants. 
	The motivation of this paper is to develop a framework to investigate the structure of those matrices. Our point of departure is to swap out $\Z$ for the ring $\polyring$ of polynomials with integral coefficients in one indeterminate $\indet$, and consider matrices $\polyM$ over $\polyring$ whose subdeterminants all lie in some fixed set $\polyS\subseteq\polyring$. 
	\begin{definition}
		\label{def_tot_S_mod_forbidden_submatrix}
		Let $R$ be a commutative ring with unity, and let $\polyS\subseteq R$. Let $1\leq m,n$. A matrix $\polyM\in R^{m\times n}$ is \emph{totally $\polyS$-modular} if every $k\times k$ subdeterminant of $\polyM$ is contained in $\polyS$ for $1\leq k\leq \min \lbrace m,n\rbrace$. Given that $m\leq n$, a matrix $\polyM\in R^{m\times n}$ with full row rank is \emph{$\polyS$-modular} if every $m\times m$ subdeterminant of $\polyM$ is contained in $\polyS$.
	\end{definition}

The advantage of operating in $\polyring$ is that we can leverage arithmetic properties of $\polyring$ to prove statements which would not otherwise hold over $\Z$. From this more abstract setting, we can still make progress towards Question 1 and 2 by evaluating the polynomials in $\polyM$ and $\polyS$ at integers. Indeed, if $\polyS\subseteq \polyring$, then we can evaluate the polynomials in $\polyS$ at an integer $\intvar$ to get the set $\polyS(a)\subseteq\Z$, and it is not hard to show that every totally $\polyS$-modular matrix $\polyM$ becomes a totally $\polyS(\intvar)$-modular matrix $\polyM(\intvar)$ after evaluating the entries of $\polyM$ at $\intvar$. Moreover, we show in Theorem~\ref{thm_from_integer_to_polyring} that if $\polyS$ is finite, then the function $\polyM\mapsto\polyM(\intvar)$ is a bijection from totally $\polyS$-modular matrices to totally $\polyS(\intvar)$-modular matrices for all but finitely many integers $\intvar$.

In this paper we primarily focus on the case $S=\pm\{0,1,x,x+1,2x+1\}$, although many of the techniques we develop are more widely applicable. We outline why we think this particular set is interesting in Section~\ref{sec_smallest_nontrivial_cases}. We state our main results in Section~\ref{sec_results}, and provide some background material in Section~\ref{sec_tools}. In Section~\ref{sec_bijection} we identify, for this particular set $\polyS$, those integers $\intvar$ for which the function $\polyM\mapsto\polyM(\intvar)$ described above is not a bijection. In Section~\ref{sec_recognition} we consider the recognition problem for totally $\polyS$-modular matrices. In Section~\ref{sec_optimization} we consider the optimization problem for totally $\polyS(\intvar)$-modular matrices, given some integer $\intvar$. In Section~\ref{sec_bimodular} we show that $\polyS$-modular matrices correspond to a special family of bimodular matrices.
	
	Given $\polyS\subseteq\polyring$, let us remark that $\polyS$-modular matrices are interesting from a matroid theory point of view. For example, the family of matroids admitting a representation as an $\polyS$-modular matrix is closed under taking minors and duality. In \cite{oxleywalsh2022bimodular}, the authors list some excluded minors of the family corresponding to $\polyS=\pm\{0,1,2\}$, and use them to establish a bound on the number of columns that an $\polyS$-modular matrix can have. The more general problem of bounding the number of columns of a $\pm\lbrace 0,1,2,\ldots,\Delta\rbrace$-modular matrix for $\Delta\geq 2$ is an active line of research; see for example \cite{averkovschymura24,GeelenNelsonWalsh2024,paatstallknecht2024forbiddenminors}. In a very different direction, if we take $\polyS=\N$, the matroids we get are known as \emph{positroids}. See \cite{postnikov2006total} for an introduction to this fascinating topic.

\begin{remark}\label{rem_S_modular_total_S_modular_equivalence}
If $1\in\polyS$, then the two notions introduced in Definition~\ref{def_tot_S_mod_forbidden_submatrix} are related: an $m\times n$ matrix $\polyM$ is totally $\polyS$-modular if and only if $(\id\;\polyM')$ is $S$-modular, where $\id$ denotes the $m\times m$ identity matrix and $\bM'_{i,j}=(-1)^{m-i}\bM_{m-i+1,j}$ for each $i\in [m]$ and $j\in [n]$.
\end{remark}

\section{Choosing a Set of Polynomials}
\label{sec_smallest_nontrivial_cases}
Which sets $\polyS\subseteq\polyring$ are interesting to study in the context of totally $\polyS$-modular matrices? We make some stipulations up front: $\polyS$ consists of linear forms only, $\polyS=-\polyS$, and $s\neq 2s'$ for each non-zero pair $s,s'\in \polyS$. In this section, we make the case for $\polyS=\pm\{0,1,x,x+1,2x+1\}$.

The first observation is that we should have $0\in\polyS$, since we would otherwise have only finitely many invertible totally $\polyS$-modular matrices. This follows from, for instance, \cite[Lemma~7]{ARTMANN2016635}. In light of Remark~\ref{rem_S_modular_total_S_modular_equivalence}, or alternatively the standard convention that the determinant of a $0\times 0$ matrix is 1, it also makes sense to have $1\in\polyS$. Finally, we want at least one non-constant polynomial in our set, so we might as well have $x\in\polyS$. So far, this yields $\polyS=\pm\{0,1,x\}$.

	\begin{definition}
	\label{def_forbidden_submatrix}
	Let $2\leq l$. The matrix $\polyM\in \polyring^{l\times l}$ is a \emph{forbidden submatrix for} $\polyS$ if every $(l-1)\times (l-1)$ submatrix is totally $\polyS$-modular but $\det\polyM\notin \polyS$. By $F(\polyS)$, we denote the set of all polynomials that arise as a determinant of some forbidden submatrix for $\polyS$.
	\end{definition}
For the particular case $\polyS=\pm\{0,1,x\}$, $\polyS$-modular (and hence totally $\polyS$-modular) matrices have a very simple structure: they are of the form given in Example~\ref{ex_trivially_S_modular}. To get something more interesting, we can append to $\polyS$ those elements $F(\polyS)$ to $S$ which do not violate the above stipulations. Using Lemma~\ref{lemma_desnanot_jacobi} we calculate
\[
F(\polyS)=\pm\{2,2x,2x^2,x-1,x+1,x^2\}.
\]
We add the elements $x-1$ and $x+1$ to $\polyS$ to get $\polyS'=\pm\{0,1,x-1,x,x+1\}$. We show in Lemma~\ref{lem_equivalence_of_S_and_S1_modular} that totally $\polyS'$-modular matrices and totally $\pm\{0,1,x,x+1,2x+1\}$-modular matrices are essentially equivalent.

Here is another way to arrive at our desired set. Given that $\polyS\subseteq\polyring$ consists only of linear forms, what are the maximal $\polyS$ for which there exist two distinct integers $a<b$ such that $\polyS(a)=\polyS(b)=\{-1,0,1\}$? Such $\polyS$ are interesting because non-constant totally $\polyS$-modular matrices admit two distinct totally unimodular evaluations. Each element of such an $\polyS$ corresponds to a line in the plane which meets exactly one point in $\{(a,-1),(a,0),(a,1)\}$ and exactly one point in $\{(b,-1),(b,0),(b,1)\}$. There are nine such lines in total. Moreover, by integrality we must have $b-a=1$. Replacing $x$ with $x-b$, we can assume without loss of generality that $b=0$ and $a=-1$. Our nine lines are now defined precisely by the linear forms $\pm\{0,1,x,x+1,2x+1\}$. This connection to total unimodularity is the key to the recognition algorithm given in Section~\ref{sec_recognition}.
\section{Results}
	\label{sec_results}
	Let us discuss our main results, and comment on how the results of this paper extend the results of the extended abstract \cite{celayakuhlmannweismantel2024matricespolynomial}. 
	Our first main result is that, for some particular sets $\polyS\subseteq\polyring$, we can
	recognize total $\polyS$-modularity in polynomial time. 

	\begin{theorem}\label{thm_recognition_polyring}
		Let $\bM\in \polyring^{m\times n}$ and $\polyS\subseteq \polyring$ be one of the following sets:
		\begin{enumerate}
			\item $\pm \lbrace 0,1,\indet\rbrace$, 
			\item $\pm \lbrace 0,1,\indet,\indet + 1\rbrace$, and
			\item $\pm \lbrace 0,1,\indet,\indet + 1, 2\indet + 1\rbrace$.
		\end{enumerate} 
		Then one can decide in polynomial time whether $\bM$ is totally $\polyS$-modular.
	\end{theorem}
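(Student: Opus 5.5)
Our plan is to reduce recognition over $\polyring$ to recognition of total unimodularity over $\Z$, using the observation from Section~\ref{sec_smallest_nontrivial_cases}: every element of $\polyS=\pm\{0,1,\indet,\indet+1,2\indet+1\}$ evaluates into $\{-1,0,1\}$ at both $\intvar=0$ and $\intvar=-1$. Since sets 1 and 2 are contained in set 3, we handle set 3 first and then add a check for the smaller sets.

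\textbf{Step 1 (entries and degree).} First check that every entry of $\bM$ lies in $\polyS$; this is necessary because $1\times 1$ subdeterminants must lie in $\polyS$. In particular each entry is a polynomial of degree at most one.

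\textbf{Step 2 (two TU tests).} Compute $\bM(0)$ and $\bM(-1)$. Decide whether both are totally unimodular using Seymour's decomposition, which gives a polynomial-time recognition algorithm for total unimodularity \cite{SEYMOUR1980305}. Evaluation is a ring homomorphism, so it commutes with determinants. Hence if $\bM$ is totally $\polyS$-modular, both evaluations are totally unimodular, and we reject if either test fails.

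\textbf{Step 3 (both tests pass).} Suppose both evaluations are TU. Then every subdeterminant $d(\indet)$ satisfies $d(0),d(-1)\in\{-1,0,1\}$. If we knew $\deg d\le 1$, then $d$ would be a line through one point of $\{(-1,-1),(-1,0),(-1,1)\}$ and one point of $\{(0,-1),(0,0),(0,1)\}$. By the nine-lines count in Section~\ref{sec_smallest_nontrivial_cases}, $d$ would then lie in $\polyS$.

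The main obstacle is controlling degrees: a $k\times k$ minor of a matrix with linear entries can have degree up to $k$. For instance, $\indet(\indet+1)$ vanishes at both $0$ and $-1$, so it passes both tests. We therefore need a third certificate.

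\begin{itemize}
\item \emph{First approach: a third evaluation.} Test a third value, say $\intvar=1$ (where $\polyS(1)=\pm\{0,1,2,3\}$) or $\intvar=-2$. This is not a TU test, however, so it seems hard to carry out.
\item \emph{Preferred approach: a structural argument.} Show that if $\bM(0)$ and $\bM(-1)$ are both TU and all entries lie in $\polyS$, then every minor has degree at most one, unless some small submatrix already witnesses failure.
\end{itemize}

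To carry out the structural argument, argue by a minimal counterexample. Take a minimal square submatrix whose determinant is not in $\polyS$. All of its proper submatrices are totally $\polyS$-modular, so it is a forbidden submatrix in the sense of Definition~\ref{def_forbidden_submatrix}. Using the Desnanot--Jacobi identity (Lemma~\ref{lemma_desnanot_jacobi}), its determinant is determined by $2\times2$ data of minors in $\polyS$. This expresses $\det$ as $(AD-BC)/E$ with $A,B,C,D,E\in\polyS$, which leaves a finite list of possible values of $F(\polyS)$.

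Each element of that list either violates one of the two TU evaluations, or has a specific shape such as $\indet(\indet+1)$. For the shapes of the second kind, we would show that a smaller forbidden pattern must appear, contradicting minimality. The hope is that this reduces to inspecting submatrices of bounded size, say $2\times2$ or $3\times3$, all of which can be enumerated in polynomial time.

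The resulting algorithm for set 3 is: Step 1, then the two TU tests, then enumerating all bounded-size submatrices.

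\textbf{Sets 1 and 2.} Run the algorithm for set 3 first. For the smaller sets we must additionally exclude $2\indet+1$ (and $\indet+1$ for set 1) as subdeterminants. For set 1 we would use the simple structure of Example~\ref{ex_trivially_S_modular}. For set 2 we would detect the value $2\indet+1$ via a sign or unimodular-evaluation argument, checking which pattern of values occurs at $0$ and $-1$. This again reduces to a bounded-size submatrix search, justified by the same minimal-forbidden-submatrix argument.
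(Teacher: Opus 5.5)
\textbf{There is a genuine gap.} Your Step~2 and the nine-lines argument are right. However, the proof is missing exactly at the obstacle you flag, and your plan for sets 1 and 2 would fail.

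\textbf{Set 3: the degree control is never proved.} The claim that degree-two candidates such as $x(x+1)$ ``force a smaller forbidden pattern'' is the whole difficulty, and it is left as a hope. The missing idea is the rank-one structure of the linear part. Write $\bM=\bM(0)+x\bR$ with $\bR$ integral.
\begin{itemize}
\item If $\rank\bR\ge 2$, an invertible $2\times 2$ submatrix $\bR'$ of $\bR$ gives a $2\times 2$ minor $\det(\bM(0)'+x\bR')$ with nonzero $x^2$-coefficient $\det\bR'$. So $\bM$ is not totally $\polyS$-modular (Lemma~\ref{lemma_rank_coefficient_matrix}), and the rank is computable in polynomial time.
\item If $\bR=\bu\bv^\top$, the modified matrix determinant lemma (Lemma~\ref{lemma_matrix_determinant_lemma}) gives $\det\bM'=(\det\bM'(0)-\det\bM'(-1))x+\det\bM'(0)$ for every square submatrix $\bM'$. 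Hence every minor is linear, and your nine-lines argument finishes (Lemma~\ref{lemma_t_bar_t_tu}).
\end{itemize}
In particular, a $2\times 2$ search does suffice for set 3, but only because of this argument, which your proposal does not contain.

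\textbf{Sets 1 and 2: no bounded-size search can work.} Minimal violating submatrices inside a totally $\pm\{0,1,x,x+1,2x+1\}$-modular matrix can be arbitrarily large.
\begin{itemize}
\item For set 1: take the odd-order cycle matrix $\id+\bm{P}$ ($\bm{P}$ a cyclic permutation matrix) and replace its $(1,1)$ entry by $x$. The result has determinant $x+1$, while all its proper minors lie in $\pm\{0,1,x\}$.
\item For set 2: the matrix
\[
\begin{pmatrix} 1&1&0&0\\ -x&1&0&-x\\ -x&0&1&-x\\ 0&0&1&1 \end{pmatrix}
\]
has determinant $2x+1$, while all its proper minors lie in $\pm\{0,1,x,x+1\}$. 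Its slope matrix is a $5$-cycle incidence matrix with one column negated, hence minimally non-TU. Longer odd cycles give larger examples.
\end{itemize}

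\textbf{What is needed instead is a global test.}
\begin{itemize}
\item For set 1, it is the third evaluation you discarded. For this set $\polyS(1)=\pm\{0,1\}$, so testing $\bM(1)$ is a TU test. Since $\det\bM'(1)=2\det\bM'(0)-\det\bM'(-1)$, total unimodularity of $\bM(0),\bM(-1),\bM(1)$ characterizes the class (Lemma~\ref{lemma_recognition_characterization_0_1_x}).
\item For set 2, the paper uses $\det\bM=\det\bM(0)-x\,\slope(\bM)$ and tests total unimodularity of $\slopematrix(\bM)$ (Lemma~\ref{lemma_recognition_characterization_1_x_x+1}). Its necessity direction needs the non-trivial Lemma~\ref{lemma_append_u_v_1_x_x+1}: $(\bM(0)\;\bu)$ and $(\bM(0)^\top\;\bv)$ are totally unimodular.
\item Example~\ref{ex_trivially_S_modular} describes such matrices structurally but gives no recognition procedure.
\end{itemize}
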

	Our next result concerns integer linear programs of the following form, where we assume that $\bN$ is a full column rank matrix and $\bb$ and $\bc$ are integral:
	\begin{align*}
		\ILP(\bN,\bb,\bc): \,\max \bc^\top\bx\, \ \text{s.t.}\, \ \bN\bx\leq\bb, \ \bx\in\Z^n.
	\end{align*}
	Given $\intvar\in\Z$, $\polyS\subseteq\polyring$ and a totally $\polyS$-modular matrix $\bM\in \polyS^{m\times n}$, we define
$\polyS(\intvar)\subseteq\Z$ to be the set of evaluations of all polynomials
of $\polyS$ at $\intvar$, and similarly we define $\bM(\intvar)\in(\polyS(\intvar))^{m\times n}$
to be the matrix obtained from $\bM$ by evaluating each entry at $\intvar$.
		
	\begin{theorem}\label{thm_opt_polyring}
		Let $\polyS = \pm \lbrace 0,1,\indet,\indet + 1, 2\indet + 1\rbrace$. Let $\bM\in \polyring^{m\times n}$ be totally $\polyS$-modular. Then one can solve in polynomial time $\ILP(\bM(\intvar),\bb,\bc)$ for integral $\bb$, $\bc$, and all $\intvar\in\Z$ such that $\bM(\intvar)$ has full column rank.
	\end{theorem}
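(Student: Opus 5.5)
We split on the integer $\intvar$ according to what $\polyS(\intvar)$ looks like. For $\polyS=\pm\{0,1,\indet,\indet+1,2\indet+1\}$ we have $\polyS(0)=\polyS(-1)=\pm\{0,1\}$. So for $\intvar\in\{0,-1\}$ the matrix $\bM(\intvar)$ is totally unimodular, and $\ILP(\bM(\intvar),\bb,\bc)$ is solved in polynomial time by linear programming via Hoffman--Kruskal. For $\intvar=1$ and $\intvar=-2$ we get $\polyS(1)=\pm\{0,1,2,3\}$ and $\polyS(-2)=\pm\{0,1,2,3\}$. For $\intvar=\pm 2$ and other small values we get sets containing some elements of size $\geq 3$. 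So no fixed-$\Delta$ algorithm applies uniformly, and a structural argument is needed.

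The plan is to use that $\bM$ has two distinct totally unimodular evaluations. We will write $\bM=\bA+\indet\bB$ with $\bA=\bM(0)$ and $\bA-\bB=\bM(-1)$, both totally unimodular. The first step is to show that every row of $\bB$ has a special form. From the entries lying in $\polyS$, each entry of $\bB$ is in $\{0,\pm1,\pm2\}$, and $\pm2$ occurs only on entries $\pm(2\indet+1)$. The next step is to use the $2\times2$ and $3\times3$ conditions (via Lemma~\ref{lemma_desnanot_jacobi} and the forbidden-submatrix list) to show that, after row and column scaling, $\bM$ is an integer combination of two totally unimodular matrices of a network-like structure. Concretely, we aim to show that $\bM(\intvar)$ arises from a TU matrix $\bT$ by a unimodular-like transformation depending on $\intvar$: there should be a TU matrix $\bT=(\bA\;\bB)$, or a block form, such that $\bM(\intvar)=\bT\binom{\id}{\intvar\id}$ up to row and column operations. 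We will then show that the ILP over $\bM(\intvar)$ can be rewritten, by introducing auxiliary variables $\by=\intvar\bx$ or by a substitution along the rank-one structure of $\bB$, as an ILP with a TU (or bimodular) constraint matrix. Bimodular ILPs are solvable by \cite{artweiszenbimodalgo2017}. This is exactly where Section~\ref{sec_bimodular} enters: $\polyS$-modular matrices are matrix projections of bimodular matrices admitting two unimodular projections. So we would lift $\bM(\intvar)$, via Remark~\ref{rem_S_modular_total_S_modular_equivalence} applied to $(\id\;\bM')$, to a bimodular matrix $\bD$ with $\bM(\intvar)$ a projection of $\bD$, and encode the ILP as a bimodular ILP in a lifted space.

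Alternatively, for $|\intvar|$ large, Theorem~\ref{thm_from_integer_to_polyring} and the results of Section~\ref{sec_bijection} identify $\bM(\intvar)$ combinatorially with $\bM$. In that regime we can exploit that $\bB$ has rank at most one, or is a network-type matrix plus few rows, after pivoting. We would then use a proximity or enumeration argument along the single parametric direction, with the remaining structure TU. Finitely many exceptional $\intvar$ would be handled individually by the bounded-subdeterminant algorithms (for $\Delta\le 2$, or $\{a,b,c\}$-modular as in \cite{glanzer2022notes}).

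The main obstacle is the structural step: proving that a totally $\polyS$-modular $\bM$ decomposes so that the ILP for arbitrary $\intvar$ reduces to a TU or bimodular ILP with a polynomially bounded blow-up. First, I would try proving that $\bB$, after pivoting with the TU matrix $\bA$, becomes a matrix with at most one nonzero per column, i.e.\ $\bx\mapsto\intvar\bB\bx$ is a scaled copy of a single aggregated variable. I would then branch on the value of that aggregated quantity modulo $\intvar$ using a proximity bound depending only on $\Delta(\bA)=1$.
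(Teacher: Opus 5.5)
There is a genuine gap. You correctly see that no fixed-$\Delta$ algorithm applies to $\bM(\intvar)$ uniformly in $\intvar$. However, you never supply the reduction that gets around this, and each concrete mechanism you sketch fails.

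\begin{itemize}
\item The substitution $\bm{y}=\intvar\bx$ adds the block $(-\intvar\id\;\;\id)$, whose $k\times k$ minors include $\pm\intvar^k$.
\item Lifting to the bimodular $\bT$ of Theorem~\ref{thm_bimodular_matrix_projection} does not help by itself. Undoing the projection reintroduces $\bw=\intvar\bp+(\intvar+1)\bq$ as a column, and the maximal minors of $(\bw\;\bT)$ containing $\bw$ are exactly the maximal minors of $\bT/\bw$, i.e.\ elements of $\polyS(\intvar)$ again.
\item Branching on the aggregated quantity modulo $\intvar$ creates $|\intvar|$ branches, which is exponential in the encoding length of $\intvar$.
\item The exceptional values cannot be handled ``individually'' with known tools. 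For $\intvar\in\{1,-2\}$ resp.\ $\intvar\in\{2,-3\}$, the matrix $\bM(\intvar)$ is only totally $\pm\{0,1,2,3\}$- resp.\ $\pm\{0,1,2,3,5\}$-modular, which is outside the scope of the bimodular algorithm and of \cite{glanzer2022notes}.
\item Your claim that the proximity window depends only on $\Delta(\bM(0))=1$ and that the remaining structure is totally unimodular is false. Once $y=\bv^\top\bx$ is aggregated, the relevant matrix is $\bM(0)_{\bv}$. For $\bM=\begin{pmatrix}1&0\\ \indet&2\indet+1\end{pmatrix}$, which is totally $\polyS$-modular, the rank-one part forces $\bu=(0,1)^\top$ and primitive $\bv=(1,2)^\top$, so $\bM(0)_{\bv}$ has the entry $2$. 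Moreover, $\Delta(\bM(0))=1$ alone gives no constant bound on $\Delta(\bM(0)_{\bv})$ at all.
\end{itemize}

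The missing idea is the pair Lemma~\ref{lemma_reduction} and Lemma~\ref{lemma_bounded_subdeterminants}. It needs neither a network-type decomposition nor a case split on $\intvar$.

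By Lemma~\ref{lemma_rank_coefficient_matrix}, $\bM=\bM(0)+\indet\bu\bv^\top$ directly. If $\bu=\zero$ or $\bv=\zero$, then $\bM(\intvar)=\bM(0)$ is totally unimodular and you are done.

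Otherwise introduce a single integer variable $y=\bv^\top\bx$. The ILP becomes $\bM(0)\bx+\intvar\bu y\le\bb$, $\bv^\top\bx=y$, so $\intvar$ appears only in the $y$-column. This system has full column rank because $\bM(\intvar)$ does.

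The proximity theorem of \cite{CGST1986}, in the directional form of \cite{celayakuhlpaatweis2023proxandflatness} with direction $\be^{n+1}$, bounds $|y^*-\alpha^*|$ by $n\cdot\Delta_n(\bM(0)_{\bv})$. In this quantity the $y$-column, and hence $\intvar$, no longer appears.

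The key estimate is $\Delta(\bM(0)_{\bv})\le 2\Delta(\bM(0))\Delta(\bM(-1))=2$. It comes from the 3-term Grassmann--Pl\"ucker relations via Lemma~\ref{lemma_bound_subdeterminants_matrix_projection}, and this is exactly where the second totally unimodular evaluation $\bM(-1)$ is used.

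Consequently only $\mathcal{O}(n)$ values of $y$ need to be tried: around an LP vertex, or around any feasible point in the unbounded case. For each fixed $y$ the problem has the totally bimodular constraint matrix $\bM(0)_{\bv}$ and is solved by \cite{artweiszenbimodalgo2017}.

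Your last paragraph points at the right aggregated variable $\bv^\top\bx$. But the window must come from proximity measured by $\bM(0)_{\bv}$, not from residues modulo $\intvar$, and the subproblems are bimodular, not totally unimodular.
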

	
		To formulate the next result, recall from Definition~\ref{def_forbidden_submatrix} that $F(\polyS)$ denotes the set of determinants of forbidden submatrices for $\polyS$. Define
	\begin{align*}
		I(\polyS) := \lbrace \intvar\in\Z : s(\intvar) = f(\intvar)\text{ for some }s\in \polyS \text{ and }f\in F(\polyS)\rbrace\subseteq\Z.
	\end{align*}
	In other words, $I(\polyS)$ consists of all $\intvar\in\Z$ such that there exists a forbidden matrix $\bM$ for $\polyS$, for which $\bM(\intvar)$ is totally $\polyS(\intvar)$-modular.
	\begin{theorem}
		\label{thm_from_integer_to_polyring}
		Let $S\subseteq \polyring$ be finite and $0\in S$. Then the function $\polyM\mapsto\polyM(\intvar)$ is a bijection from totally $\polyS$-modular matrices to totally $\polyS(\intvar)$-modular matrices if and only if $\intvar \in \Z\backslash I(\polyS)$. Moreover, $I(\polyS)$ is finite, which means this function is not a bijection for at most finitely many values of $\intvar\in\Z$.
	\end{theorem}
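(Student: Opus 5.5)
The plan is to reduce everything to an entrywise lifting argument together with an induction on the size of minors. Two facts are used throughout: evaluation commutes with determinants, $\det(\polyM(\intvar))=(\det\polyM)(\intvar)$, and by definition $F(\polyS)\cap\polyS=\emptyset$.

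\emph{Finiteness of $I(\polyS)$.} First I would show that $F(\polyS)$ is finite. Let $\polyM$ be an $l\times l$ forbidden submatrix. For $l=2$, $\det\polyM=s_1s_4-s_2s_3$ with $s_i\in\polyS$, so there are finitely many possibilities. For $l\geq 3$, I apply the Desnanot--Jacobi identity (Lemma~\ref{lemma_desnanot_jacobi}) to $\polyM$:
\[
\det\polyM\cdot\det\polyM_{\mathrm{c}}=\det\polyM_{11}\det\polyM_{ll}-\det\polyM_{1l}\det\polyM_{l1}.
\]
Here $\polyM_{\mathrm{c}}$ is the central $(l-2)\times(l-2)$ submatrix and the $\polyM_{ij}$ are $(l-1)\times(l-1)$ submatrices, so all these minors lie in $\polyS$. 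If $\det\polyM_{\mathrm{c}}\neq 0$, then $\det\polyM\in\{(pq-rt)/u : p,q,r,t,u\in\polyS,\ u\neq 0\}$, which is a finite set. If every choice of rows and columns for the central minor gives $0$, I would permute rows and columns to find a pair $i,j$ whose complementary $(l-2)$-minor is nonzero. If all $(l-2)$-minors of $\polyM$ vanish, then $\rank\polyM\leq l-2$, so $\det\polyM=0\in\polyS$, and $\polyM$ was not forbidden. This case analysis is the main technical obstacle; the permutations must preserve the hypotheses, which they do up to sign, and $\polyS=-\polyS$ may be needed for this. Once $F(\polyS)$ is finite, each difference $f-s$ with $f\in F(\polyS)$ and $s\in\polyS$ is a nonzero polynomial, so it has finitely many integer roots. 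Hence $I(\polyS)$ is finite.

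\emph{Sufficiency ($\intvar\notin I(\polyS)$ implies bijection).} Injectivity requires evaluation at $\intvar$ to be injective on $\polyS$, because entries are $1\times1$ minors. I would derive this from $\intvar\notin I(\polyS)$. Suppose $s\neq s'$ in $\polyS$ with $s(\intvar)=s'(\intvar)$. Consider the matrix $\begin{pmatrix} s & s'\\ 1 & 1\end{pmatrix}$. If $1\in\polyS$, its $1\times1$ minors lie in $\polyS$, and its determinant $s-s'$ evaluates to $0\in\polyS(\intvar)$; this is where the hypothesis $0\in\polyS$ enters. If $s-s'\notin\polyS$, the matrix is forbidden, so $\intvar\in I(\polyS)$. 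If $s-s'\in\polyS$, then $s-s'$ and $0$ are two distinct elements of $\polyS$ that agree at $\intvar$. I would then pick the colliding pair to minimize degree or some similar measure and iterate. This step needs care and may require a more ad hoc construction of a small forbidden matrix for general finite $\polyS$.

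With entrywise injectivity established, surjectivity works as follows. Given an integer matrix $\bN$ that is totally $\polyS(\intvar)$-modular, let $\polyM$ be its unique entrywise lift. I show by induction on $k$ that every $k\times k$ minor of $\polyM$ lies in $\polyS$. The case $k=1$ holds by construction. For $k\geq 2$, suppose some $k\times k$ submatrix has all its $(k-1)$-minors in $\polyS$ but determinant $f\notin\polyS$. Then this submatrix is forbidden, so $f\in F(\polyS)$. But $f(\intvar)$ is a minor of $\bN$, so $f(\intvar)\in\polyS(\intvar)$, which gives $\intvar\in I(\polyS)$, a contradiction.

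\emph{Necessity.} Suppose $\intvar\in I(\polyS)$, witnessed by a forbidden $\polyM$ with $f=\det\polyM$ and $f(\intvar)=s(\intvar)$. Then $\polyM(\intvar)$ is totally $\polyS(\intvar)$-modular: its proper minors are evaluations of elements of $\polyS$, and its determinant equals $s(\intvar)$. If evaluation is not injective on $\polyS$, the map already fails to be injective. Otherwise, the only entrywise preimage of $\polyM(\intvar)$ is $\polyM$ itself, which is not totally $\polyS$-modular, so the map is not surjective. In either case it is not a bijection.
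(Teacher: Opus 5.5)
Your proof of the finiteness of $I(\polyS)$, the lifting-plus-induction argument for surjectivity, and the necessity direction are all sound and essentially match the paper. The gap is in the sufficiency direction, at the step you flag yourself. You never show that $\intvar\notin I(\polyS)$ forces $\varphi_\intvar$ to be injective on $\polyS$. Without this, the map fails to be injective already on $1\times 1$ matrices, and the entrywise lift in your surjectivity argument is not unique.

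Your matrix $\begin{pmatrix} s & s'\\ 1 & 1\end{pmatrix}$ is admissible only when $1\in\polyS$, which the theorem does not assume; the set $\pm\{0,x,x+1,2x+1\}$ is an example without it. More seriously, the iteration stalls. Once you reach a colliding pair $(t,0)$ with $t=s-s'\in\polyS$, the matrix $\begin{pmatrix} t & 0\\ 1 & 1\end{pmatrix}$ has determinant $t\in\polyS$. So no forbidden matrix is ever produced, and minimizing degree does not help.

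The missing idea is how to treat the zero case, together with a $2\times 2$ construction whose entries lie in $\polyS$.

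\textbf{Zero case.} Suppose $0\neq t\in\polyS$ and $t(\intvar)=0$. Then $t\neq\pm 1$, so the powers $t,t^2,t^3,\dots$ are pairwise distinct. Since $\polyS$ is finite, there is a minimal $k\geq 2$ with $t^k\notin\polyS$. The $k\times k$ matrix $\diag(t,\dots,t)$ is then forbidden:
\begin{itemize}
\item its $j\times j$ minors are $0$ or $t^j$;
\item these lie in $\polyS$ for $j<k$, using $0\in\polyS$ and minimality of $k$;
\item its determinant $t^k$ vanishes at $\intvar$.
\end{itemize}
Hence $\intvar\in I(\polyS)$.

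\textbf{General collision.} For distinct nonzero $s_1,s_2\in\polyS$ with $s_1(\intvar)=s_2(\intvar)$, use $\begin{pmatrix} s_1 & s_2\\ s_2 & s_1\end{pmatrix}$, whose entries lie in $\polyS$. Its determinant $s_1^2-s_2^2$ vanishes at $\intvar$. Either the matrix is forbidden, or $s_1^2-s_2^2\in\polyS$ and you are back in the zero case. If $s_1^2-s_2^2=0$, then $s_1=-s_2$, so $s_1(\intvar)=0$ directly.

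This is how the paper closes the argument. With this step added, the rest of your proof goes through.
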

	Given particular choices of $\polyS$, we show in  Section~\ref{ss_computing_I(S)} that it is possible to compute $I(\polyS)$ explicitly. This leads to the following two corollaries:
	\begin{corollary}
		\label{cor_recognition_int}
		Let $\bN\in\Z^{m\times n}$ and $\polyS \subseteq \Z$ be one of the following sets
		\begin{enumerate}
			\item $\pm \lbrace 0,1,\intvar\rbrace$ for all $\intvar\in\Z\backslash\lbrace -2,2\rbrace$,
			\item $\pm \lbrace 0,1,\intvar,\intvar + 1\rbrace$ for all $\intvar \in \Z \backslash\lbrace -3,-2,1,2\rbrace$, and
			\item $\pm \lbrace 0,1,\intvar,\intvar + 1,2\intvar + 1\rbrace$ for all $\intvar \in \Z\backslash\lbrace -3,-2,1,2\rbrace$.
		\end{enumerate}
		Then one can decide in polynomial time whether $\bN$ is totally $\polyS$-modular.
	\end{corollary}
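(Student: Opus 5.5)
The plan is to reduce Corollary~\ref{cor_recognition_int} to Theorem~\ref{thm_recognition_polyring} via the bijection of Theorem~\ref{thm_from_integer_to_polyring}. Fix one of the three polynomial sets $\polyS\subseteq\polyring$ from Theorem~\ref{thm_recognition_polyring}, and an integer $\intvar$ outside the stated exceptional set. The integer set in the corollary is then exactly $\polyS(\intvar)$.

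\textbf{Step 1: compute $I(\polyS)$.} Using the explicit computation of $F(\polyS)$ from Section~\ref{ss_computing_I(S)}, I will verify that $I(\polyS)$ lies inside the excluded set. For instance, for $\polyS=\pm\{0,1,\indet\}$ we have $F(\polyS)=\pm\{2,2x,2x^2,x-1,x+1,x^2\}$. Solving $s(\intvar)=f(\intvar)$ over all pairs should give only $\intvar\in\{-2,2\}$, once we discard the values where $\polyS(\intvar)$ degenerates harmlessly (e.g.\ $\intvar\in\{0,\pm1\}$, where the integer set is $\pm\{0,1\}$ and the test is ordinary total unimodularity anyway). The other two sets are handled the same way.

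By Theorem~\ref{thm_from_integer_to_polyring}, for every such $\intvar$ the evaluation map $\polyM\mapsto\polyM(\intvar)$ is a bijection between totally $\polyS$-modular matrices and totally $\polyS(\intvar)$-modular matrices. Hence $\bN$ is totally $\polyS(\intvar)$-modular if and only if it has a totally $\polyS$-modular preimage.

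\textbf{Step 2: construct the candidate preimage.} Every entry of $\bN$ is a $1\times 1$ subdeterminant, so it must lie in $\polyS(\intvar)$; otherwise we reject immediately. For generic $\intvar$ the polynomials in $\polyS$ take pairwise distinct values at $\intvar$, so each entry $\bN_{ij}$ determines a unique $s\in\polyS$ with $s(\intvar)=\bN_{ij}$. Setting $\bM_{ij}=s$ gives the unique candidate $\bM$ with $\bM(\intvar)=\bN$. When two elements of $\polyS$ collide at $\intvar$, I must argue that uniqueness of the preimage still pins down the choice, or treat those small $\intvar$ separately:
\begin{itemize}
\item at $\intvar=0$ and $\intvar=-1$ the integer set is $\pm\{0,1\}$, and Seymour's recognition of total unimodularity applies;
\item any remaining small cases, I expect, fall in the excluded list.
\end{itemize}

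\textbf{Step 3: run the recognition algorithm.} Apply Theorem~\ref{thm_recognition_polyring} to $\bM$ and return its answer. Correctness follows from the bijection, and the running time is polynomial.

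The main obstacle is Step 2 in the degenerate cases: if $\bN$ is totally $\polyS(\intvar)$-modular with preimage $\bM'$, we need the entrywise lift $\bM$ to equal $\bM'$. This holds whenever the entry lift is forced, since $\bM'$ is itself determined entrywise. So the work is to check that for non-excluded $\intvar$ the evaluation $s\mapsto s(\intvar)$ is injective on $\polyS$, or to dispatch the finitely many non-injective values directly.
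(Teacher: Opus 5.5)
Your proposal is correct and takes essentially the paper's route: compute $I(\polyS)$ as in Section~\ref{ss_computing_I(S)}, and for $\intvar\notin I(\polyS)$ lift $\bN$ entrywise to the unique candidate $\bM$, then apply Theorem~\ref{thm_recognition_polyring} via the bijection of Theorem~\ref{thm_from_integer_to_polyring} (whose proof already shows that $s\mapsto s(\intvar)$ is injective on $\polyS$ for such $\intvar$, so you need not check this separately). The remaining non-excluded values of $I(\polyS)$, namely $\intvar\in\{-1,0\}$ and additionally $\intvar=1$ for the first set, give $\polyS(\intvar)=\pm\{0,1\}$, so classical total unimodularity recognition finishes the argument, as you indicate.
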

	
	\begin{corollary}
		\label{cor_opt_int}
		Let $\intvar\in\Z\backslash\lbrace -3,-2,1,2\rbrace$ and $S = \pm\lbrace 0,1,\intvar,\intvar + 1,2\intvar + 1\rbrace$. Let $\bN\in\Z^{m\times n}$ have full column rank and be totally $S$-modular. Then one can solve $\ILP(\bN,\bb,\bc)$ for integral $\bb$ and $\bc$ in polynomial time.
	\end{corollary}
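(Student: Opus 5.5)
The plan is to reduce Corollary~\ref{cor_opt_int} to Theorem~\ref{thm_opt_polyring}, using the bijection of Theorem~\ref{thm_from_integer_to_polyring}. Put $\polyS=\pm\{0,1,\indet,\indet+1,2\indet+1\}\subseteq\polyring$ and let $\intvar\in\Z\setminus\{-3,-2,1,2\}$. Then $\polyS(\intvar)$ is exactly the integer set $S$ in the corollary.

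\textbf{Step 1: compute $I(\polyS)$.} I take as given the computation in Section~\ref{ss_computing_I(S)}, namely $I(\polyS)=\{-3,-2,1,2\}$, where $\polyS$ is the third set. This is the same set that gives the exceptional values in Corollary~\ref{cor_recognition_int}(3). For our $\intvar$ we therefore have $\intvar\notin I(\polyS)$.

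\textbf{Step 2: lift $\bN$ to a polynomial matrix.} Theorem~\ref{thm_from_integer_to_polyring} says the map $\polyM\mapsto\polyM(\intvar)$ is a bijection from totally $\polyS$-modular matrices onto totally $\polyS(\intvar)$-modular matrices. Since $\bN$ is totally $S$-modular, there is a unique totally $\polyS$-modular $\bM\in\polyring^{m\times n}$ with $\bM(\intvar)=\bN$.

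Finding $\bM$ efficiently is the step I expect to be the main obstacle. Each entry of $\bN$ is a $1\times 1$ subdeterminant, so it lies in $S$. Each entry of $\bM$ lies in $\polyS$, and its evaluation at $\intvar$ is the corresponding entry of $\bN$. So $\bM$ is obtained entrywise by choosing preimages under $s\mapsto s(\intvar)$.

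The difficulty is that this evaluation map on $\polyS$ need not be injective. For instance, $0$ and $\indet$ agree at $\intvar=0$, and $1$ and $\indet+1$ agree there too. I would handle this as follows.
\begin{itemize}
\item If the evaluation map on $\polyS$ is injective at $\intvar$ (the generic case), the lift is forced entrywise and takes $O(mn)$ time.
\item Otherwise $\intvar$ is one of finitely many values; here I expect $\intvar\in\{0,-1\}$.
\end{itemize}
At $\intvar=0$ or $\intvar=-1$ we have $S=\{-1,0,1\}$, so $\bN$ is totally unimodular. I would then either run the TU-case reasoning from Section~\ref{sec_recognition} to build a lift, or, more simply, solve $\ILP(\bN,\bb,\bc)$ directly by linear programming. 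Total unimodularity makes the LP relaxation integral (Hoffman--Kruskal), and LP is polynomial-time solvable. Either way, whenever an explicit lift is needed it is computable in polynomial time.

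\textbf{Step 3: apply Theorem~\ref{thm_opt_polyring}.} Now $\bM$ is totally $\polyS$-modular, $\bM(\intvar)=\bN$ has full column rank, and $\bb,\bc$ are integral. Theorem~\ref{thm_opt_polyring} then solves $\ILP(\bM(\intvar),\bb,\bc)=\ILP(\bN,\bb,\bc)$ in polynomial time. Every step costs polynomial time in the input size, so the corollary follows.
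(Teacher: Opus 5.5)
Your argument is correct and follows the route the paper intends: for $\intvar\notin I(\polyS)$, the bijection of Theorem~\ref{thm_from_integer_to_polyring} gives an entrywise, polynomial-time lift $\bM$ with $\bM(\intvar)=\bN$, and Theorem~\ref{thm_opt_polyring} then solves the problem, while for $\intvar\in\{-1,0\}$ the matrix $\bN$ is totally unimodular and linear programming suffices.

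One correction: Section~\ref{ss_computing_I(S)} gives $I(\polyS)=\{-3,\ldots,2\}$, not $\{-3,-2,1,2\}$. Indeed $-1,0\in I(\polyS)$ is forced, because $\varphi_{\intvar}$ is not injective on $\polyS$ at those values. So your Step~1 claim and the uniqueness assertion in Step~2 are false at $\intvar\in\{-1,0\}$, and your separate totally unimodular treatment of these two values is required rather than an optional shortcut.
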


	Now let $\polyS = \pm \lbrace 0,1,\indet,\indet + 1, 2\indet + 1\rbrace$. The last result of this paper shows that $\polyS$-modular matrices correspond to bimodular matrices which admit two distinct unimodular matrix projections. We define matrix projections in Section~\ref{sec_matrix_projections}.
	\begin{theorem}\label{thm_bimodular_matrix_projection}Assume that $\bA$ is a full row rank matrix with entries in $\polyring$ with at least two distinct non-zero maximal subdeterminants in absolute value. Then $\bA$ is $S$-modular if and only if there exists a full row rank bimodular matrix $\bT$ and non-zero, non-parallel integer vectors $\bp,\bq$ such that $\bT/\bp$ and $\bT/\bq$ are unimodular and $\bA=\bT/(x\bp+(x+1)\bq)$.
\end{theorem}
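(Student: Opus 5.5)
The plan is to read $\bT/\bv$ as the matrix projection along $\bv$: the columns of $\bT$ are mapped by a lattice projection whose kernel is spanned by $\bv$. The basic fact I will use is that each maximal subdeterminant of $\bT/\bv$ equals, up to sign, $\det(\bv\;\bT_{B'})$ for the corresponding set $B'$ of columns of $\bT$. For $\bv=\indet\bp+(\indet+1)\bq$, multilinearity in the first column gives
\[
\det(\bA_{B'})=\pm\bigl(\indet\det(\bp\;\bT_{B'})+(\indet+1)\det(\bq\;\bT_{B'})\bigr).
\]

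\emph{The ``if'' direction.} Since $\bT/\bp$ and $\bT/\bq$ are unimodular, both $\det(\bp\;\bT_{B'})$ and $\det(\bq\;\bT_{B'})$ lie in $\{0,\pm1\}$. Enumerating $\indet\alpha+(\indet+1)\beta$ over $\alpha,\beta\in\{0,\pm1\}$ gives exactly $\pm\{0,1,\indet,\indet+1,2\indet+1\}=\polyS$. So $\bA$ is $\polyS$-modular. Full row rank of $\bA$ follows because $\bv$ is a non-zero vector over $\polyring$, as $\bp,\bq$ are non-parallel.

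\emph{The ``only if'' direction.} Here I construct $\bT$ from $\bA$. Since $\polyS(0)=\polyS(-1)=\{0,\pm1\}$, the evaluations $\bA(0)$ and $\bA(-1)$ are unimodular. The key step is a normal form: after multiplying $\bA$ on the left by a matrix in $GL_m(\Z)$, which changes all maximal minors only by a common sign, I want exactly one row to depend on $\indet$, and that row to be linear, i.e.\ $\br_0+\indet\br_1$ with the remaining rows $\bR$ integral.

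Given this normal form, take $\bT\in\Z^{(m+1)\times n}$ with rows $\br_0$, $\br_0-\br_1$, and $\bR$. Take $\bp=\be_1$ and $\bq=\be_2$. Projecting along $\indet\be_1+(\indet+1)\be_2$ sends a column $(t_1,t_2,\bt')$ to $((\indet+1)t_1-\indet t_2,\bt')$. On the first coordinate this gives $(\indet+1)\br_0-\indet(\br_0-\br_1)=\br_0+\indet\br_1$, so $\bT/(\indet\bp+(\indet+1)\bq)=\bA$. Moreover, $\bT/\bq$ and $\bT/\bp$ are the matrices with rows $(\br_0;\bR)=\bA(0)$ and $(\br_0-\br_1;\bR)=\bA(-1)$, so both are unimodular.

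It remains to check that $\bT$ has full row rank and is bimodular. Every $(m+1)\times(m+1)$ minor of $\bT$ can be expanded along a column using the minors of $\bA(0)$ and $\bA(-1)$, equivalently the $\alpha,\beta$ above. I would combine this expansion with the hypothesis that $\bA$ has two distinct non-zero maximal minors in absolute value. That hypothesis forces some basis $B'$ with $\alpha\beta\ne0$, or with $\alpha$ and $\beta$ supported on different bases. This excludes $\bT$ having rank $m$, and it should bound all $(m+1)$-minors of $\bT$ by $2$ in absolute value. The same hypothesis also yields that $\bp,\bq$ are non-parallel.

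\emph{Main obstacle: the normal form.} Entries of $\bA$ need not a priori be linear, and the $\indet$-dependence may be spread over all rows. My first attempt uses that every Plücker coordinate of the row space of $\bA$ has degree at most one. Writing $\bA(\indet)$ via Taylor expansion at $\indet=0$, this means the map $\indet\mapsto\operatorname{rowspace}\bA(\indet)$ traces a line in the Plücker embedding. Such a line in a Grassmannian is a pencil: all the subspaces contain a common $(m-1)$-dimensional subspace $W$ and lie in a common $(m+1)$-dimensional subspace. Since $\bA(0)$ is unimodular, $W$ can be taken spanned by integral rows forming $\bR$. Extending $\bR$ by one further row of $\bA(0)$ gives an integral basis for the row lattice of $\bA(0)$, and a suitable $GL_m(\Z)$ change isolates a single varying row. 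Linearity of that row then follows from the degree bound on the minors, after clearing any multiple of the rows of $\bR$.

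Making this lattice argument rigorous over $\Z$ rather than over $\mathbb{Q}$ is the step I expect to be hardest. If it fails, I would fall back on a pivoting argument using the tools from Section~\ref{sec_tools}.
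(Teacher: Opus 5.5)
Your ``if'' direction is correct and matches the paper. In the ``only if'' direction the normal form is the crux, and as you state it, it is false: left multiplication by $GL_m(\Z)$ cannot in general isolate a single varying row. For example, $\bA=\begin{pmatrix}1 & x^2 & 1+x^3\\ 0 & 1 & x\end{pmatrix}$ has maximal minors $1,x,-1$ and satisfies all the hypotheses. Yet $a(1,x^2,1+x^3)+b(0,1,x)=(a,\,ax^2+b,\,a+ax^3+bx)$ is integral only for $a=b=0$. The pencil argument only controls row spaces over $\mathbb{Q}(x)$. So you must drop integral row operations and instead produce a matrix with the same maximal minors, which is all that $\bA=\bT/\bw$ requires. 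The integrality step you flag is then exactly what is missing, and your sketch points the wrong way. It can be done as follows:
\begin{itemize}
\item[(i)] Write $\bm{\pi}(x)=\bm{\pi}_0+x\bm{\pi}_1$ for the vector of maximal minors of $\bA$, and let $L(t)$ be the row space of $\bA(t)$.
\item[(ii)] Set $W=L(0)\cap L(-1)$. It is rational, and of dimension $m-1$ by the pencil property.
\item[(iii)] Choose $\bR$ with rows $\bR_1,\dots,\bR_{m-1}$ forming a $\Z$-basis of the saturated lattice $W\cap\Z^n$, and put $u=\bR_1\wedge\cdots\wedge\bR_{m-1}$.
\item[(iv)] Since $\bR$ extends to a $\Z$-basis of $\Z^n$, the image of $\bm{y}\mapsto\bm{y}\wedge u$ is a direct summand of $\Lambda^m\Z^n$.
\item[(v)] As $\bm{\pi}_0$ and $\bm{\pi}(-1)$ are integral and lie in $\mathbb{Q}^n\wedge u$, there are $\br_0,\br_1\in\Z^n$ with $\bm{\pi}(x)=(\br_0+x\br_1)\wedge u$. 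So the matrix with rows $\br_0+x\br_1$ and $\bR$ has exactly the maximal minors of $\bA$.
\end{itemize}

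The second gap is the bimodularity of $\bT$, which you only assert. Expanding an $(m+1)$-minor of $\bT$ along a row or column yields either cofactors built from rows of $\bR$ or a sum of up to $m+1$ terms, so no bound of $2$ follows. The two-distinct-minors hypothesis does not help here. It is needed elsewhere: for full row rank of $\bA(0)$, $\bA(-1)$ and $\bT$, and for the independence of $\bm{\pi}_0,\bm{\pi}_1$.

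The missing tool is Desnanot--Jacobi. For a basis $B$ of $\bT$, choose $I\subseteq B$ with $|I|=m-1$ and $\det\bR_I\neq 0$. Apply Lemma~\ref{lemma_desnanot_jacobi} to $\bT_B$ with pivot $\bR_I$. The four entries of the resulting $2\times 2$ determinant are maximal minors of the matrices with rows $(\br_0,\bR)$ and $(\br_0-\br_1,\bR)$, i.e.\ of $\bA(0)$ and $\bA(-1)$ up to sign. Hence $|\det\bT_B|\le|\det\bR_I\det\bT_B|\le 2$. This is Lemma~\ref{lemma_bound_subdeterminants_matrix_projection} with $\bp=\be_1$, $\bq=\be_2$, and it is how the paper finishes.

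With both repairs your route works and differs genuinely from the paper's, which avoids Grassmannians altogether. The paper passes to $\bA_B^{-1}\bA$ and substitutes $x\mapsto-(dx-b)/(cx-a)$ to get a polynomial, totally $\polyS''$-modular matrix with an identity block. It reads off $\hat{\bA}(0)+x\bu\bv^\top$ from Lemma~\ref{lemma_rank_coefficient_matrix} and borders this by $\bu$ and $-\bv^\top$ to obtain $\bT$. Finally it rewrites $s'\bp'+s\bq'$ as $x\bp+(x+1)\bq$ by a determinant-one change of $(\bp',\bq')$. Your version lands directly on $\bp=\be_1$, $\bq=\be_2$, at the cost of the pencil theorem and a saturation argument in $\Lambda^m\Z^n$.
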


We now comment on how these results relate to and extend the ones in \cite{celayakuhlmannweismantel2024matricespolynomial}. Theorem~\ref{thm_recognition_polyring} and Corollary~\ref{cor_recognition_int} are new, although Theorem~\ref{thm_recognition_polyring}(3) appears as \cite[Lemma~8]{celayakuhlmannweismantel2024matricespolynomial} and Corollary~\ref{cor_recognition_int}(3) appears as \cite[Theorem~2]{celayakuhlmannweismantel2024matricespolynomial}. Theorem~\ref{thm_opt_polyring} puts \cite[Theorem~3]{celayakuhlmannweismantel2024matricespolynomial} in a more general context, and, in addition, both Theorem~\ref{thm_opt_polyring} and Corollary~\ref{cor_opt_int} generalize \cite[Theorem~3]{celayakuhlmannweismantel2024matricespolynomial} by adding $\pm 1$ to the set $\polyS$. Theorem~\ref{thm_from_integer_to_polyring} is an extension of \cite[Lemma~3]{celayakuhlmannweismantel2024matricespolynomial}. Finally, Theorem~\ref{thm_bimodular_matrix_projection} is new.
	
	\section{Tools}\label{sec_tools}
	Throughout this paper we work with the lexicographical order of $\polyring$ which is defined by $s < t$ if and only if the leading coefficient of $t - s$ is positive. With respect to this ordering we further define the absolute value of $s\in \polyring$ to be
	\begin{align*}
		|s| = \begin{cases}
			s, & \text{if }s \geq 0,\\
			-s, & \text{otherwise} 
		\end{cases}.
	\end{align*}
	Also, we interchangeably pass from polynomials $s\in \polyring$ to polynomial functions, denoted by $s(a)$, when evaluating polynomials. This can be done since the polynomial function is uniquely determined by the polynomial over $\polyring$; see \cite[Chapter~4]{langalgebra}. 
	Let $I\subseteq \lbrack n\rbrack$ and $J \subseteq \lbrack n\rbrack$. We denote by $\bM_{\backslash I,\backslash J}$ the submatrix of $\bM$ without the rows indexed by $I$ and columns indexed by $J$. In what follows, the notation $\bA_{(k)}\subseteq\bM$ is shorthand for specifying that $\bA_{(k)}$ is a $k\times k$ submatrix of $\bM$. 
	Given a submatrix $\bA$ of $\bM$ and $i,j\in\lbrack n\rbrack$, we denote by $\bA[i,j]$ the submatrix of $\bM$ that contains $\bA$ along with the extra row and column indexed by $i$ and $j$ respectively. We write $\id$ for the identity matrix with respect to the appropriate dimension. Most of the results presented below remain valid for rings more general than $\polyring$. However, to remain close to our applications in integer programming, we state them throughout in the setting of $\polyring$ and only deviate if necessary.
	
	\subsection{Determinants of Forbidden Submatrices}
	One of the main tools of this paper is the Desnanot-Jacobi identity, also known as Dodgson condensation, which is a special case of Sylvester's determinant identity \cite{sylvester1851relationminors}. By convention, the determinant of an empty matrix is 1.
	\begin{lemma}[Desnanot-Jacobi identity]
		\label{lemma_desnanot_jacobi}
		Let $\bM\in \polyring^{n\times n}$ for $n\geq 2$ and let $\bA_{(n-2)}\subseteq \bM$ with $\bA_{(n-2)} = \bM_{\backslash I,\backslash J}$ for the ordered sets $I=\lbrace i_1,i_2\rbrace$ and $J=\lbrace j_1,j_2\rbrace$. Then we get
		\begin{align*}
		\det\bM\cdot\det\bA_{(n-2)} = \det\begin{pmatrix}
			\det \bA_{(k)}[i_1,j_1]& \det \bA_{(k)}[i_1,j_2] \\
			\det \bA_{(k)}\lbrack i_2, j_1\rbrack & \det \bA_{(k)}\lbrack i_2,j_2\rbrack
		\end{pmatrix}.
	\end{align*}
	\end{lemma}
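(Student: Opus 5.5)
The plan is to reduce to a generic situation, prove the identity by a Schur complement computation there, and then remove the genericity assumption by a polynomial identity argument. Since both sides are integer polynomials in the $n^2$ entries of $\bM$, it suffices to prove the identity for a matrix of independent indeterminates $\bM=(m_{ij})$ over $\Z[m_{ij}]$. Specializing the $m_{ij}$ to elements of $\polyring$ (a ring homomorphism $\Z[m_{ij}]\to\polyring$) then gives the lemma, because determinants commute with ring homomorphisms. In the generic ring $\Z[m_{ij}]$, which is an integral domain, $\det\bA_{(n-2)}\neq 0$. So we may work over its fraction field $K$ and invert $\bA:=\bA_{(n-2)}$.

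First I would normalize the positions. Permute the rows of $\bM$ so that $i_1,i_2$ become the last two rows, in this order, and permute the columns so that $j_1,j_2$ become the last two columns. This multiplies $\det\bM$ by a sign $\varepsilon=\varepsilon_r\varepsilon_c$. It also multiplies each $\det\bA[i_r,j_s]$ by a sign $\varepsilon_r^{(r)}\varepsilon_c^{(s)}$, coming from moving a single row and a single column to the last position of that $(n-1)\times(n-1)$ submatrix.

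The only bookkeeping step is to check that both products on the right-hand side, $\det\bA[i_1,j_1]\det\bA[i_2,j_2]$ and $\det\bA[i_1,j_2]\det\bA[i_2,j_1]$, acquire the same total sign, and that this sign equals $\varepsilon$.
\begin{itemize}
\item For the rows, moving $i_1$ past the rows of $\bA$ below it and moving $i_2$ past the rows of $\bA$ below it together give the same parity as moving the pair $(i_1,i_2)$ to the bottom of $\bM$. This is because $i_2$ never has to pass $i_1$ when $i_1<i_2$.
\item Each of the two products uses each of $i_1,i_2$ exactly once, so both products get the same row sign. The same argument applies to the columns.
\end{itemize}
I expect this sign check to be the main, though routine, obstacle. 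I would verify it carefully by counting inversions.

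After normalization, write
\[
\bM=\begin{pmatrix}\bA & \bB\\ \bC & \bD\end{pmatrix}
\]
with $\bD$ a $2\times 2$ block. Set $\bm{\Sigma}=\bD-\bC\bA^{-1}\bB$, the Schur complement. Block elimination gives $\det\bM=\det\bA\cdot\det\bm{\Sigma}$. Applying the same formula to the $(n-1)\times(n-1)$ matrix obtained by keeping only row $r$ and column $s$ of the last block gives $\det\bA[i_r,j_s]=\det\bA\cdot\bm{\Sigma}_{rs}$. Hence the right-hand side equals
\[
(\det\bA)^2\det\bm{\Sigma}=\det\bA\cdot\det\bM,
\]
which is the left-hand side. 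This identity holds in $K$ and both sides lie in $\Z[m_{ij}]$, so it holds there, and specialization to $\polyring$ finishes the proof.
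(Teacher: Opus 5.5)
Your proof is correct, and it takes a different route from the paper. The paper does not prove the identity: it cites it as a special case of Sylvester's determinant identity. It later remarks only that it can be derived from the 3-term Grassmann--Pl\"ucker relations (Lemma~\ref{lemma_3_term_GP_relations}) by taking $\bu$ and $\bv$ to be standard unit vectors. Concretely, one would take $\be^{i_1},\be^{i_2}$, let $\br,\bs$ be the columns $j_1,j_2$ of $\bM$, and let the remaining $n-2$ columns form the block.

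\textbf{Paper's route.} This derivation costs almost nothing once the relations are available. It also ties the identity to the tool the paper needs anyway for $\polyS$-modular matrices and matrix projections. However, it rests on an external reference and leaves implicit the permutation-sign bookkeeping that you carry out.

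\textbf{Your route.} Your argument is self-contained. Passing to the generic matrix over $\Z[m_{ij}]$ lets you invert $\det\bA_{(n-2)}$ even when it vanishes for the actual matrix over $\polyring$. It also proves the identity over every commutative ring at once. The Schur complement gives the conceptual reason for the identity: the four $(n-1)\times(n-1)$ minors are $\det\bA_{(n-2)}$ times the entries of the $2\times 2$ Schur complement.

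\textbf{The ordering assumption.} You should state explicitly that your sign check uses $i_1<i_2$ and $j_1<j_2$. This is the only reading of ``ordered sets'' under which the statement holds. For example, for $n=2$, $i_1=2$, $i_2=1$, $j_1=1$, $j_2=2$, the right-hand side equals $-\det\bM$.
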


	As an illustration of the utility of this identity, we establish a generalization of the well-known fact, usually attributed to Gomory \cite{CORNUEJOLS200097}, that every matrix that is not totally unimodular and has entries in $\pm\lbrace 0,1\rbrace$ contains a submatrix with determinant two in absolute value. 
	
	\begin{lemma}{\cite[Lemma~2]{celayakuhlmannweismantel2024matricespolynomial}}
		\label{lemma_finite_number_of_determinants}
		Let $\polyS\subseteq \polyring$ be finite. Then the set $F(\polyS)$ of determinants attained by the forbidden submatrices of $\polyS$ is finite and for each $f\in F(\polyS)$ we have
		\begin{align*}
			|f|\leq 2\cdot \max_{s\in \polyS}s^2.
		\end{align*} 
	\end{lemma}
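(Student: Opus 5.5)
Let $\bM\in\polyring^{l\times l}$ be a forbidden submatrix for $\polyS$ with $f=\det\bM\notin\polyS$. The plan is to apply the Desnanot--Jacobi identity (Lemma~\ref{lemma_desnanot_jacobi}) to $\bM$ and to express $f$ through $(l-1)\times(l-1)$ and $(l-2)\times(l-2)$ minors, which lie in $\polyS$ because every proper submatrix is totally $\polyS$-modular. For $l=2$ nothing needs to be done: $\bA_{(0)}$ is empty with determinant $1$ and the entries lie in $\polyS$, so $f=s_1s_4-s_2s_3$ with $s_i\in\polyS$. Since $\polyS$ is finite, this already gives finitely many values and the bound $|f|\le 2\max_{s\in\polyS}s^2$. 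That bound uses $|st|=|s||t|\le\max s^2$ in the lexicographic order on $\polyring$ (here one needs $\max_s s^2$ to be the largest $|st|$, which follows from the order being multiplicative on nonnegatives) together with the triangle inequality.

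For $l\ge3$ the key step is to find rows $I=\{i_1,i_2\}$ and columns $J=\{j_1,j_2\}$ such that $d:=\det\bM_{\backslash I,\backslash J}$ is a unit or, failing that, such that we can reduce to a smaller forbidden matrix. Both $d$ and the four $(l-1)$-minors lie in $\polyS$, so $f\cdot d=s_1s_4-s_2s_3$. If some choice gives $d\neq0$, then $f$ is determined by the finitely many tuples $(d,s_1,\dots,s_4)$, which yields finiteness. For the bound we use $|f|\le|f||d|$ whenever $|d|\ge1$, which holds for every nonzero $d\in\polyring$ in the lexicographic order; hence $|f|\le|s_1s_4|+|s_2s_3|\le 2\max s^2$.

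The main obstacle is the degenerate case in which every $(l-2)\times(l-2)$ minor of $\bM$ vanishes. To handle it, note first that if all $(l-1)$-minors of $\bM$ vanish, then $\bM$ has rank below $l-1$, so $f=0\in\polyS$ whenever $0\in\polyS$. More generally, the adjugate argument gives $\det\bM^{\,l-2}\cdot\det\bM=\det\operatorname{adj}(\bM)$, and the rank drop of $\bM$ can be controlled through Jacobi's complementary minor theorem. Concretely, I would argue as follows. If every $(l-2)$-minor vanishes, then $\bM$ has rank at most $l-2$ over the fraction field, and therefore $\det\bM=0$. If $0\notin\polyS$, then no minor can vanish, so this case cannot occur at all. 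In either case $f\in\polyS\cup\{0\}$, and we may simply define the relevant $f$ to be nonzero. Consequently the degenerate case either contradicts $f\notin\polyS$ or gives $f=0$, which is harmless for both finiteness and the bound.

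Combining the two cases, every $f\in F(\polyS)$ arises as $(s_1s_4-s_2s_3)/d$ with $s_i,d\in\polyS$ and $d\neq0$. This set is finite because $\polyS$ is finite, and it satisfies $|f|\le 2\max_{s\in\polyS}s^2$.
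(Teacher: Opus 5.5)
Your proposal is correct and follows the paper's proof. Both apply the Desnanot--Jacobi identity around a nonzero $(l-2)\times(l-2)$ minor $d\in\polyS$, so that $f=(s_1s_2-s_3s_4)/d$ takes only finitely many values and $|f|\le 2\max_{s\in\polyS}s^2$ follows from the triangle inequality and $|d|\ge 1$. The paper simply restricts to forbidden matrices with $\det\bM\neq 0$, where such a minor automatically exists; this amounts to your separate treatment of the degenerate case, and your remarks about the adjugate and Jacobi's complementary minor theorem are not needed.
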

	\begin{proof}
		Select some forbidden submatrix $\polyM$ for $\polyS$ such that $\det\polyM\neq 0$. 
		There exists an invertible submatrix $\bA_{(n-2)}\subseteq \bM$. By the Desnanot-Jacobi identity, Lemma~\ref{lemma_desnanot_jacobi}, applied to $\bA_{(n-2)}$, we obtain that 
		\begin{align*}
			\det\polyM = \frac{1}{\det\bA_{(n-2)}}\cdot\left( s_1s_2-s_3s_4\right),
		\end{align*}
		where $s_i\in \polyS$ for $i=1,2,3,4$. Since $\det\bA_{(n-2)}\in S$, the right hand side only attains finitely many values as $\polyS$ is finite. Hence, there are only finitely many values possible for $\det\polyM$. 
		To obtain the inequality, we take absolute values on both sides, apply the triangle inequality, and observe that $\left|s_i\right|\leq \max_{s\in \polyS}\left|s\right|$. The claim follows then from $1\leq \left|\det\bA_{(n-2)}\right|$. 
	\end{proof}
	
	We immediately obtain that every forbidden submatrix for totally $\pm\lbrace 0, 1,\ldots,\Delta\rbrace$-modular matrices over $\Z$ admits a determinant bounded by $2\Delta^2$ in absolute value. The bound in Lemma \ref{lemma_finite_number_of_determinants} is tight: Let $\polyS\subseteq\polyring$ be finite such that $s\in \polyS$ implies $-s\in \polyS$ and $\lbrace 0\rbrace \neq \polyS$. Select $\tau = \max_{s\in S}\left|s\right|$. Then
	\begin{align*}
		\begin{pmatrix} \phantom{-}\tau & \tau \\ -\tau & \tau\end{pmatrix}
	\end{align*}
	is a forbidden submatrix for $S$ and has determinant $2\tau^2$. However, it is possible to strengthen the bound in Lemma \ref{lemma_finite_number_of_determinants} if the dimension is sufficiently large and $S\subseteq \Z$, see \cite[Theorem 4]{celayakuhlmannweismantel2024matricespolynomial}.

We end this subsection with one more determinant identity. Given some $\polyS$, the following analogue of the Desnanot-Jacobi identity is more useful when working with $\polyS$-modular matrices rather than totally $\polyS$-modular matrices:

\begin{lemma}[{3-term Grassmann-Pl\"ucker relations \cite[Ch. 3]{BLSWZ1999}}] \label{lemma_3_term_GP_relations}
Let $m\geq 2$, let $\bA$ be an $m\times (m-2)$ matrix, and let $\bu,\bv,\br,\bs$ be $m$-dimensional vectors. Then 
\[
\det(\bu\ \bv\ \bA)\det(\br\ \bs\ \bA)=\det(\bu\ \br\ \bA)\det(\bv\ \bs\ \bA)-\det(\bu\ \bs\ \bA)\det(\bv\ \br\ \bA).
\]
\end{lemma}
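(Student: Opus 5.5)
The plan is to reduce the identity to the case $m=2$ by a change of basis, and then check that case directly.

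Since both sides are polynomials with integer coefficients in the entries of $\bA,\bu,\bv,\br,\bs$, it suffices to prove the identity when these entries are independent indeterminates. So I would work over the field $K=\mathbb{Q}(\text{entries})$. Over $\polyring$ (or any commutative ring) the identity then follows by specialization. In this generic setting $\bA$ has full column rank $m-2$.

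The first step is to choose an invertible matrix $\bm{G}\in K^{m\times m}$ with
\[
\bm{G}\bA=\begin{pmatrix}\zero\\ \id\end{pmatrix},
\]
where $\zero$ is $2\times(m-2)$ and $\id$ is $(m-2)\times(m-2)$. Such a $\bm{G}$ exists by extending the columns of $\bA$ to a basis of $K^m$. Replacing every vector $\bw$ by $\bm{G}\bw$ multiplies each of the six determinants by $\det\bm{G}$. Hence each side of the identity is multiplied by $(\det\bm{G})^2\neq 0$, so we may assume $\bA$ has exactly this form.

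Now expand each determinant along its last $m-2$ columns, which are unit vectors. This gives
\[
\det(\bu\ \bv\ \bA)=u_1v_2-u_2v_1=:[\bu\bv],
\]
where only the first two coordinates of the vectors matter. The claim therefore reduces to the $2\times 2$ Plücker identity
\[
[\bu\bv][\br\bs]=[\bu\br][\bv\bs]-[\bu\bs][\bv\br].
\]
I would verify this by expanding both sides into monomials of degree four. Alternatively, fix $\bu,\bv,\br$ and observe that both sides are linear in $\bs$. It then suffices to check the cases $\bs=\be_1$ and $\bs=\be_2$, each of which is a two-line computation.

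The only step that needs care is the reduction to generic entries, which justifies working over a field despite the statement being over $\polyring$. A degenerate $\bA$ of rank less than $m-2$ makes all six determinants vanish in any case. The $2\times2$ computation itself is routine, so I do not expect a genuine obstacle here.
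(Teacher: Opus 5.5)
Your proof is correct. The paper does not prove this lemma; it quotes it from the oriented matroid literature \cite[Ch.~3]{BLSWZ1999}. What you give is therefore a self-contained proof rather than a variant of one in the text.

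Each step holds up:
\begin{itemize}
\item With generic entries, $\bA$ has full column rank over the field of fractions.
\item The change of basis to $\left(\begin{smallmatrix}\zero\\ \id\end{smallmatrix}\right)$ scales both sides by the same nonzero factor $(\det\bm{G})^2$.
\item The block-triangular shape reduces every determinant to the $2\times 2$ minor in the first two coordinates.
\item The $2\times 2$ identity checks out: for $\bs=\be_1$ both sides equal $-r_2(u_1v_2-u_2v_1)$.
\end{itemize}

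The standard textbook route is different. The expression $\sum_{k=1}^{m+1}(-1)^k\det(\bx_1\cdots\widehat{\bx_k}\cdots\bx_{m+1})\det(\bx_k\ \bm{y}_1\cdots\bm{y}_{m-1})$ is multilinear and alternating in $m+1$ vectors of an $m$-dimensional space, so it vanishes identically. Take $(\bx_i)=(\bv,\br,\bs,\bA)$ and $(\bm{y}_j)=(\bu,\bA)$. Every term in which $\bx_k$ is a column of $\bA$ then has a repeated column and drops out. After using antisymmetry, exactly the three-term relation remains.

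The two approaches buy different things:
\begin{itemize}
\item The alternating-form argument works verbatim over any commutative ring and yields all Grassmann--Pl\"ucker relations at once.
\item Your argument needs a field, hence the detour through generic entries. For the integral domains $\Z$, $\polyring$, $\mathbb{Q}(x)$ that the paper actually uses, your remark that a rank-deficient $\bA$ makes all six determinants vanish would already suffice. In exchange, your proof is completely explicit. It also makes transparent that the identity depends only on $\bu,\bv,\br,\bs$ modulo the column span of $\bA$, where it is just the case $m=2$.
\end{itemize}
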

Note that we can derive the Desnanot-Jacobi identity, Lemma~\ref{lemma_desnanot_jacobi}, from this lemma by taking $\bu$ and $\bv$ to be standard unit vectors.
	\subsection{Determinants and Rank-1 Updates}
	Suppose we have a totally $\polyS$-modular matrix $\polyM = \bM(0) + \indet\cdot \bR\in\polyring^{m\times n}$. The following result relates the rank of $\bR$ to the largest degree among the polynomials in a given set $\polyS$. We write $\deg(s)$ for the degree of $s\in \polyring$.
	
	\begin{lemma}{\cite[Lemma~4]{celayakuhlmannweismantel2024matricespolynomial}}
		\label{lemma_rank_coefficient_matrix}
		Let $S\subseteq \polyring$ be finite. Let $\polyM = \bM(0) + \indet\cdot \bR \in\polyring^{m\times n}$ be totally $\polyS$-modular. Then we have $\rank\bR\leq \max_{s\in \polyS}\deg(s)$. 
	\end{lemma}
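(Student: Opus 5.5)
Since $\polyM = \bM(0) + \indet\cdot\bR$, every entry of $\polyM$ is linear in $\indet$. I will show that a large rank of $\bR$ forces a square submatrix of $\polyM$ whose determinant has large degree in $\indet$. Set $d:=\max_{s\in\polyS}\deg(s)$ and $r:=\rank\bR$. Total $\polyS$-modularity means every square subdeterminant of $\polyM$ lies in $\polyS$, so every subdeterminant has degree at most $d$. It therefore suffices to exhibit an $r\times r$ submatrix of $\polyM$ whose determinant has degree exactly $r$.

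Because $\bR$ is an integer matrix of rank $r$, it has an $r\times r$ submatrix $\bR_{I,J}$ with $\det\bR_{I,J}\neq 0$. Consider the corresponding submatrix $\polyM_{I,J}=\bM(0)_{I,J}+\indet\,\bR_{I,J}$. Its determinant is a polynomial in $\indet$ of degree at most $r$. Expanding by multilinearity, each term of the Leibniz expansion picks either the constant part or the $\indet$-part in each row. Hence the coefficient of $\indet^r$ is exactly $\det\bR_{I,J}$, which is nonzero. So $\deg\det\polyM_{I,J}=r$.

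Since $\polyM$ is totally $\polyS$-modular, $\det\polyM_{I,J}\in\polyS$, which gives $r=\deg\det\polyM_{I,J}\le d$. The degenerate cases need only a remark. If $r=0$ the claim is trivial. We have $r\le\min\{m,n\}$, so the submatrix is a legitimate $k\times k$ submatrix with $1\le k\le\min\{m,n\}$, as required by Definition~\ref{def_tot_S_mod_forbidden_submatrix}.

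There is no real obstacle. The only step needing care is identifying the leading coefficient of $\det(\bM(0)_{I,J}+\indet\bR_{I,J})$ as $\det\bR_{I,J}$. One can see this directly, or via $\det(\bA+\indet\bB)=\indet^r\det(\bB+\indet^{-1}\bA)$ over the field of rational functions. Note that finiteness of $\polyS$ is not even needed beyond ensuring the maximum degree exists.
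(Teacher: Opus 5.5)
Your proof is correct and follows the paper's argument: you pick an invertible $r\times r$ submatrix of $\bR$ and show that the corresponding submatrix of $\polyM$ has determinant of degree exactly $r$. The only difference is cosmetic: you read off the leading coefficient $\det\bR_{I,J}$ directly from the multilinear expansion, whereas the paper factors out $\det\bR'$ and applies the Leibniz formula to $\det(\bR'^{-1}\bM(0)'+\indet\cdot\id)$.
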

	\begin{proof}
		Let $r = \rank \bR$ and $\bR'\in\Z^{r\times r}$ be an invertible submatrix of $\bR$ and $\bM' = \bM(0)' + \indet \cdot \bR'$ the corresponding submatrix of $\bM$.  
		We get
		\begin{align*}
			\det\bM' = \det\left(\bM(0)' + \indet \cdot \bR'\right) = \det\bR'\cdot\det\left(\bR'^{-1}\bM(0)' + \indet \cdot \id\right),
		\end{align*} 
		Using the Leibniz formula for the determinant $\det(\bR'^{-1}\bM(0)' + \indet \cdot \bm{I})$, we observe that there exists only one term with degree $r$, namely the term corresponding to the identity permutation, and no term with degree larger than $r$. Thus, the right hand side in the equation above is a polynomial of degree $r$. So $\det\polyM'$ is also a polynomial of degree $r$. Since the matrix $\polyM'$ is totally $\polyS$-modular, we have that $r = \deg(\det\polyM')\leq \max_{s\in \polyS}\deg(s)$. 
	\end{proof}
	
	So, if $\max_{s\in \polyS}\deg(s) = 1$ and $\bR$ is non-zero, then Lemma~\ref{lemma_rank_coefficient_matrix} implies that $\rank\bR = 1$. Hence, we may write $\bM = \bM(0) + \indet\cdot \bu\cdot \bv^\top$ for some integral $\bu$ and $\bv$. 
	Given such a matrix $\bM = \bM(0) + \indet\cdot \bu\cdot \bv^\top$ without any restriction on its subdeterminants, it is possible to establish that subdeterminants of $\polyM$ are indeed polynomials of degree at most one. This is a consequence of the well-known matrix determinant lemma for rank-1 updates. A variant of this lemma is stated below, where we work over $\R[\indet]$ instead of $\polyring$ to deal with the case when $\bM(0)$ is singular. It has been presented in \cite[Lemma~5]{celayakuhlmannweismantel2024matricespolynomial} without a proof.
	
	\begin{lemma}[Modified matrix determinant lemma]
		\label{lemma_matrix_determinant_lemma}
		Let $\polyM = \bM(0)+ \indet \cdot\bu\cdot\bv^\top\in\R[\indet]^{n\times n}$ for $\bM(0)$, $\bu$, and $\bv$. Then we have
		\begin{align*}
			\det \polyM = (\det\bM(0) - \det\bM(-1))\cdot \indet + \det\bM(0).
		\end{align*}
	\end{lemma}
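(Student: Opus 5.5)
The plan is to show first that $\det \polyM$ is a polynomial in $\indet$ of degree at most one, and then read off its two coefficients by evaluating at $\indet=0$ and $\indet=-1$.

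\emph{Degree bound.} Write $\bM(0)=(\bm{m}_1,\ldots,\bm{m}_n)$ column by column. Then the $j$-th column of $\polyM$ is $\bm{m}_j+\indet v_j\bu$. Expand $\det\polyM$ by multilinearity in the columns. For each subset $J\subseteq[n]$ we take the summand $\indet v_j\bu$ in the columns $j\in J$ and the summand $\bm{m}_j$ in the remaining columns. If $|J|\geq 2$, the resulting matrix has two columns that are both multiples of $\bu$, so its determinant vanishes. Hence
\[
\det\polyM=\det\bM(0)+\indet\sum_{j=1}^n v_j\det\bigl(\bM(0)\text{ with column } j \text{ replaced by } \bu\bigr).
\]
This is an affine polynomial $\alpha\indet+\beta$ with real coefficients. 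The argument works over $\R[\indet]$ and needs no invertibility of $\bM(0)$, which is precisely why the modified lemma is stated this way.

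\emph{Identifying the coefficients.} Since $\det\polyM=\alpha\indet+\beta$ as a polynomial, evaluation commutes with taking the determinant, because the determinant is a polynomial expression in the entries. At $\indet=0$ this gives $\beta=\det\bM(0)$. At $\indet=-1$ it gives $\det\bM(-1)=-\alpha+\beta$, so $\alpha=\det\bM(0)-\det\bM(-1)$. Substituting these values yields the stated formula.

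The only step that requires any care is the first one: we must justify that all terms of degree at least two vanish. The multilinear expansion handles this directly. An equivalent route is the classical matrix determinant lemma together with a density argument for singular $\bM(0)$, but the expansion avoids that detour.
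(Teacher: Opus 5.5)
Your proof is correct, but it follows a different route from the paper. The paper first assumes $\bM(0)$ is invertible and applies the classical matrix determinant lemma, $\det(\bM(0)+\indet\bu\bv^\top)=(1+\indet\bv^\top\bM(0)^{-1}\bu)\det\bM(0)$, then reads off the slope by evaluating at $\indet=-1$. It handles singular $\bM(0)$ separately, by perturbing to $\bM(0)+\varepsilon\id$ and letting $\varepsilon\to 0$ pointwise in $\indet$; this limit argument is why the statement is posed over $\R[\indet]$. Your column-wise multilinear expansion replaces both steps at once. It shows directly that every term containing two copies of $\bu$ vanishes, so $\det\polyM$ is affine in $\indet$ with no invertibility assumption and no limit; from there both arguments find the coefficients the same way, by evaluating at $0$ and $-1$. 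Your route is purely algebraic, so it works verbatim over any commutative ring, in particular directly over $\polyring$ with no detour through $\R[\indet]$. It also gives an explicit linear coefficient, $\sum_j v_j\det(\bM(0)\text{ with column }j\text{ replaced by }\bu)=\bv^\top\mathrm{adj}(\bM(0))\bu$, which by Cramer's rule and a Laplace expansion equals $-\slope(\bM)$. So you get the identity \eqref{eq_matrix_determinant_lemma} used in Section~\ref{sec_recognition} essentially for free. The paper's route gains only the convenience of citing a standard result.
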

	\begin{proof}
		If $\bM(0)$ is invertible, the classical matrix determinant lemma yields 
		\begin{align*}
			\det\left(\bM(0)+\indet\cdot\bu\cdot\bv^\top\right) = \left(1+\indet\bv^\top \bM(0)^{-1}\bu\right)\det\bM(0).
		\end{align*}
		In particular, evaluating at $\indet = -1$ gives
		\[
		\det\bM(-1)=\left(1-\bv^\top \bM(0)^{-1}\bu\right)\det\bM(0).
		\]
		Hence
		\[
		\det\bM(0)-\det\bM(-1)
		=\det\bM(0)\bv^\top \bM(0)^{-1}\bu,
		\]
		and the claim follows. If $\bM(0)$ is not invertible, consider the matrices
		\[
		\bM_\varepsilon := \left(\bM(0) + \varepsilon\cdot \id\right) + \indet \cdot\bu\cdot\bv^\top\in\R[\indet]^{n\times n},
		\]
		where $\varepsilon\in\mathbb{R}$.
		For all sufficiently small $\varepsilon\neq 0$, the matrix $\bM(0) + \varepsilon\cdot \id$ is invertible. Applying the already proven case, we obtain
		\[
		\det\bM_\varepsilon
		=
		\det(\bM(0) + \varepsilon\cdot \id)
		+ \indet\left(\det(\bM(0) + \varepsilon\cdot \id)-\det(\bM(0) + \varepsilon \cdot\id-\bu\bv^\top)\right).
		\]
		Fix some $a\in\R$ and consider the two sides above as polynomials in $\indet$ evaluated at $a$. Letting $\varepsilon$ go to $0$ shows that 
		\begin{align*}
			\det\bM(a) = \det\bM(0) + a\cdot\left(\det\bM(0) - \det\bM(-1)\right)
		\end{align*}
		for all $a\in \R$, which implies the claimed identity over $\R[x]$.
	\end{proof}
	
	\subsection{Variable Substitution}
	
	For a commutative unital ring $R$, let $\mathcal{M}(R)$ be the set of matrices with entries in $R$; see \cite[Chapter 2]{langalgebra} for basics about rings. Given commutative unital rings $R,R'$ and a ring homomorphism $\varphi:R\rightarrow R'$, we define $\Phi:\mathcal{M}(R)\rightarrow \mathcal{M}(R')$ to be the function that applies $\varphi$ entrywise to matrices in $\mathcal{M}(R)$. In this paper we are primarily concerned with the case $R=\polyring$, but it will also be convenient to consider the more general setting $R=\mathbb{Q}(x)$ where $\mathbb{Q}(x)$ is the field of fractions of $\polyring$. Note that we have the inclusions $\Z\hookrightarrow\polyring\hookrightarrow\mathbb{Q}(x)$. 
	
	It is a basic fact that every homomorphism $\mathbb{Q}(x)\rightarrow\mathbb{Q}(x)$ is uniquely determined by the value it assigns to $x$. However, not every $r\in\mathbb{Q}(x)$ has the property that the assignment $x\mapsto r$ extends to a homomorphism $\mathbb{Q}(x)\rightarrow \mathbb{Q}(x)$. For example, there is no homomorphism $\varphi:\mathbb{Q}(x)\rightarrow\mathbb{Q}(x)$ for which $\varphi(x)=2$, because we would have
\begin{align*}
0=\varphi(x-2)\varphi\left(\frac{1}{x-2}\right)=\varphi\left(\frac{x-2}{x-2}\right)=\varphi(1)=1.
\end{align*}
It is precisely the constants that are problematic here: one can check that $x\mapsto r$ extends to a homomorphism if and only if $r\in\mathbb{Q}(x)\setminus \mathbb{Q}$. 

Nevertheless, in this paper we would like to be able to substitute $x$ with constants. We get around this problem by using localization: given $r\in\mathbb{Q}(x)$ we define $\mathsf{S}_r:=\{q\in\polyring : q(r)\neq 0\}$ and consider the ring $\mathsf{S}_r^{-1}\polyring$ of rational functions of the form $p/q$ with $p\in\polyring$ and $q\in\mathsf{S}_r$. Note that we have the inclusions $\polyring\hookrightarrow\mathsf{S}_r^{-1}\polyring\hookrightarrow\mathbb{Q}(x)$, and note also that $\mathsf{S}_r^{-1}\polyring=\mathbb{Q}(x)$ if and only if $r\in\mathbb{Q}(x)\setminus\mathbb{Q}$. 
	
	Now, for \emph{every} $r\in\mathbb{Q}(x)$, the assignment $x\mapsto r$ extends uniquely to a ring homomorphism $\varphi_r:\mathsf{S}_r^{-1}\polyring\rightarrow\mathbb{Q}(x)$. We define $\Phi_r$ to be the function which applies $\varphi_r$ entrywise to matrices in $\mathcal{M}(\mathsf{S}_r^{-1}\polyring)$. Given such a matrix $\polyM$, we write $\polyM(r)$ as shorthand for $\Phi_r(\polyM)$. Similarly, given a subset $\polyS\subseteq\mathsf{S}_r^{-1}\polyring$, we write $\polyS(r)$ as shorthand for $\varphi_r(\polyS)$. In summary, $\polyM(r)$ and $\polyS(r)$ are what one gets after plugging in $r$ to $\polyM$ and $\polyS$, respectively. We need to assume that the entries of $\polyM$ and the elements of $\polyS$ allow for the substitution of $x$ with $r$, but this is always the case if $r$ is not a constant or if $\polyM\in\mathcal{M}(\polyring)$ and $\polyS\subseteq\polyring$ .

	Fix some $r\in\mathbb{Q}(\indet)$. We now state some straightforward consequences of the fact that $\varphi_r$ is a homomorphism. Lemma~\ref{lemma_determinant_homomorphism} below follows from the Leibniz formula for the determinant, which exhibits the determinant of a square matrix as a polynomial function of its entries. Lemma~\ref{lemma_totally_S_r_modular} is an immediate consequence of Lemma~\ref{lemma_determinant_homomorphism}. Lemma~\ref{lemma_variable_substitution_inverse} follows from the adjugate formula for the matrix inverse, and from Lemma~\ref{lemma_variable_substitution_multiplication} by setting $\bA=\bM$ and $\bB=\bM^{-1}$.
\begin{lemma} For a square matrix $\bM\in\mathcal{M}(\mathsf{S}_r^{-1}\polyring)$ we have $\det (\polyM(r)) = (\det \polyM)(r)$.
\label{lemma_determinant_homomorphism}

\end{lemma}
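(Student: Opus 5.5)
The plan is to write out the determinant of $\polyM$ with the Leibniz formula and then push $\varphi_r$ through it, using only that $\varphi_r$ is a ring homomorphism. Let $\polyM=(m_{ij})\in(\mathsf{S}_r^{-1}\polyring)^{n\times n}$. The Leibniz formula expresses
\[
\det\polyM=\sum_{\sigma\in \mathfrak{S}_n}\operatorname{sgn}(\sigma)\prod_{i=1}^n m_{i\sigma(i)},
\]
which is a finite sum of signed products of entries of $\polyM$. Every one of these entries lies in the ring $\mathsf{S}_r^{-1}\polyring$, so $\det\polyM$ lies there too. This means the expression $(\det\polyM)(r)=\varphi_r(\det\polyM)$ makes sense.

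Next I apply $\varphi_r$ to both sides. Because $\varphi_r$ is a ring homomorphism, it is additive and multiplicative. It also sends $1$ to $1$, so it sends the integer $\operatorname{sgn}(\sigma)=\pm1$ to $\pm1$. Carrying it inside the sum and the products gives
\[
\varphi_r(\det\polyM)=\sum_{\sigma}\operatorname{sgn}(\sigma)\prod_i \varphi_r(m_{i\sigma(i)}).
\]
By definition of $\Phi_r$, the entries of $\polyM(r)=\Phi_r(\polyM)$ are exactly the values $\varphi_r(m_{ij})$. The right-hand side is therefore the Leibniz expansion of $\det(\polyM(r))$, now computed in $\mathbb{Q}(\indet)$, and this proves the identity.

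Nothing here is really an obstacle. The only point that needs attention is domains: the determinant of $\polyM$ must be formed inside $\mathsf{S}_r^{-1}\polyring$, where $\varphi_r$ is defined, and not inside all of $\mathbb{Q}(\indet)$. This holds because $\mathsf{S}_r^{-1}\polyring$ is a subring, so it is closed under the sums and products that appear in the Leibniz formula. I would also note the convention that an empty ($0\times 0$) matrix has determinant $1$; this case is consistent because $\varphi_r(1)=1$.
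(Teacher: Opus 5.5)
Your proposal is correct and is essentially the paper's own argument: the paper also justifies this lemma by the Leibniz formula, which exhibits the determinant as a polynomial in the entries, and then uses that $\varphi_r$ is a ring homomorphism. Your remark that the determinant is formed inside the subring $\mathsf{S}_r^{-1}\mathbb{Z}[x]$, where $\varphi_r$ is defined, is a good addition.
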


\begin{lemma}
\label{lemma_totally_S_r_modular} Let $\bM\in\mathcal{M}(\mathsf{S}_r^{-1}\polyring)$ and $\polyS\subseteq\mathsf{S}_r^{-1}\polyring$. If $\bM$ is $\polyS$-modular then $\bM(r)$ is $\polyS(r)$-modular. If $\bM$ is totally $\polyS$-modular then $\bM(r)$ is totally $\polyS(r)$-modular. 
\end{lemma}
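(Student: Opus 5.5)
The statement follows directly from Lemma~\ref{lemma_determinant_homomorphism}, which says that substitution commutes with taking determinants. The plan is to track an arbitrary relevant subdeterminant of $\bM(r)$ back to the corresponding subdeterminant of $\bM$.

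First I would record that evaluation commutes with taking submatrices. By definition, $\Phi_r$ applies $\varphi_r$ entrywise. So if $\bA$ is the submatrix of $\bM$ indexed by row set $I$ and column set $J$, then the submatrix of $\bM(r)$ indexed by $I,J$ is exactly $\bA(r)$. Lemma~\ref{lemma_determinant_homomorphism} then gives $\det(\bA(r))=(\det\bA)(r)$.

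For the total version, take any $k\times k$ submatrix of $\bM(r)$ with $1\le k\le\min\{m,n\}$. It has the form $\bA(r)$ for a $k\times k$ submatrix $\bA$ of $\bM$. Total $\polyS$-modularity gives $\det\bA\in\polyS$, hence $\det(\bA(r))=(\det\bA)(r)\in\varphi_r(\polyS)=\polyS(r)$.

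For the non-total version, the same argument applies to every $m\times m$ subdeterminant, again placing each one in $\polyS(r)$. The only point needing care is that Definition~\ref{def_tot_S_mod_forbidden_submatrix} also requires $\bM(r)$ to have full row rank. I would treat this as follows: when $0\notin\polyS(r)$, some $m\times m$ minor is nonzero, which gives full row rank. In general, rank can drop under evaluation (e.g.\ at a root of a minor), and I expect the intended reading is that the full-row-rank hypothesis on $\bM(r)$ is either implicit or assumed whenever the lemma is applied. So the main (mild) obstacle is this rank bookkeeping; I would state the conclusion as ``every $m\times m$ subdeterminant of $\bM(r)$ lies in $\polyS(r)$,'' which is $\polyS(r)$-modularity whenever $\bM(r)$ has full row rank.
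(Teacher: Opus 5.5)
Your argument is the same as the paper's. The paper records this lemma as an immediate consequence of Lemma~\ref{lemma_determinant_homomorphism}, applied to each square submatrix exactly as you do, using that evaluation commutes with taking submatrices. Your caveat about full row rank is correct and is glossed over in the paper: for instance $\bM=(x)$ is $\pm\{0,1,x\}$-modular, but $\bM(0)=(0)$ does not have full row rank. It causes no harm for non-constant $r$, where $\varphi_r$ is an injective field homomorphism. In the paper's constant-$r$ use of the non-total version (the proof of Theorem~\ref{thm_bimodular_matrix_projection}), full row rank of $\bT/\bp$ and $\bT/\bq$ holds for independent reasons.
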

		
\begin{lemma}
\label{lemma_variable_substitution_multiplication}Suppose $\bA,\bB\in\mathcal{M}(\mathsf{S}_r^{-1}\polyring)$ are two matrices for which the multiplication $\bA\bB$ exists. Then $\bA\bB\in\mathcal{M}(\mathsf{S}_r^{-1}\polyring)$ and $(\bA\bB)(r)=\bA(r)\bB(r)$.
\end{lemma}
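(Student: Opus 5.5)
We prove the statement entrywise, by reducing it to the fact that $\varphi_r$ is a ring homomorphism. Write $\bA\in\mathcal{M}(\mathsf{S}_r^{-1}\polyring)$ of size $m\times k$ and $\bB$ of size $k\times n$. By definition of matrix multiplication, $(\bA\bB)_{ij}=\sum_{l=1}^k \bA_{il}\bB_{lj}$.

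First we check that $\mathsf{S}_r^{-1}\polyring$ is a subring of $\mathbb{Q}(x)$, i.e.\ closed under sums and products. The set $\mathsf{S}_r$ is multiplicatively closed: if $q_1(r)\neq 0$ and $q_2(r)\neq 0$, then $(q_1q_2)(r)=q_1(r)q_2(r)\neq 0$, because evaluation at $r$ is a homomorphism into the field $\mathbb{Q}(x)$, which has no zero divisors. Hence
\[
\frac{p_1}{q_1}+\frac{p_2}{q_2}=\frac{p_1q_2+p_2q_1}{q_1q_2}
\qquad\text{and}\qquad
\frac{p_1}{q_1}\cdot\frac{p_2}{q_2}=\frac{p_1p_2}{q_1q_2}
\]
again have denominators in $\mathsf{S}_r$. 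It follows that each entry $(\bA\bB)_{ij}$ lies in $\mathsf{S}_r^{-1}\polyring$, so $\bA\bB\in\mathcal{M}(\mathsf{S}_r^{-1}\polyring)$.

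Next, since $\varphi_r$ is a ring homomorphism, it is additive and multiplicative. Therefore
\[
\varphi_r\big((\bA\bB)_{ij}\big)=\sum_{l=1}^k\varphi_r(\bA_{il})\,\varphi_r(\bB_{lj})=\big(\bA(r)\bB(r)\big)_{ij}.
\]
Because $\Phi_r$ applies $\varphi_r$ entrywise, this gives $(\bA\bB)(r)=\bA(r)\bB(r)$.

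There is no substantial obstacle here. The only point needing care is the closure of the localization under the ring operations, i.e.\ the multiplicative closedness of $\mathsf{S}_r$, and this follows from $\mathbb{Q}(x)$ being an integral domain.
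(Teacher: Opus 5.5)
Your proof is correct and takes essentially the same approach as the paper. The paper states this lemma without a written proof, calling it a straightforward consequence of $\varphi_r$ being a ring homomorphism applied entrywise to $(\bA\bB)_{ij}=\sum_l \bA_{il}\bB_{lj}$. Your additional check that $\mathsf{S}_r$ is multiplicatively closed, so that $\mathsf{S}_r^{-1}\polyring$ is a subring, is a detail the paper leaves implicit.
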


\begin{lemma}
\label{lemma_variable_substitution_inverse}Suppose $\bM\in\mathcal{M}(\mathsf{S}_r^{-1}\polyring)$ is an invertible matrix. Then $\bM^{-1}\in\mathcal{M}(\mathsf{S}_r^{-1}\polyring)$ if and only if $\bM(r)$ is invertible. If this is the case we have $\bM^{-1}(r)=(\bM(r))^{-1}$.
\end{lemma}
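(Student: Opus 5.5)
Both directions of Lemma~\ref{lemma_variable_substitution_inverse} come from the adjugate formula $\bM^{-1}=(\det\bM)^{-1}\operatorname{adj}(\bM)$, once we have the right description of the units of $\mathsf{S}_r^{-1}\polyring$. The adjugate's entries are signed $(n-1)\times(n-1)$ minors of $\bM$, so they lie in $\mathsf{S}_r^{-1}\polyring$, since this is a ring.

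\emph{Units.} We first record that an element $p/q\in\mathsf{S}_r^{-1}\polyring$, with $q\in\mathsf{S}_r$, is invertible in $\mathsf{S}_r^{-1}\polyring$ exactly when $\varphi_r(p/q)=p(r)/q(r)\neq 0$. If $p(r)\neq 0$, then $p\in\mathsf{S}_r$, so $q/p$ lies in the ring. Conversely, if $p/q$ has an inverse $p'/q'$ in the ring, applying the homomorphism $\varphi_r$ gives $\varphi_r(p/q)\varphi_r(p'/q')=1$, hence $\varphi_r(p/q)\neq0$.

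\emph{Sufficiency.} Suppose $\bM(r)$ is invertible. By Lemma~\ref{lemma_determinant_homomorphism}, $\varphi_r(\det\bM)=\det(\bM(r))\neq 0$, so $\det\bM$ is a unit of $\mathsf{S}_r^{-1}\polyring$. The inverse $\bM^{-1}$, computed in $\mathbb{Q}(x)$, equals $(\det\bM)^{-1}\operatorname{adj}(\bM)$, and every factor lies in the ring, so $\bM^{-1}\in\mathcal{M}(\mathsf{S}_r^{-1}\polyring)$.

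\emph{Necessity.} Suppose $\bM^{-1}\in\mathcal{M}(\mathsf{S}_r^{-1}\polyring)$. Lemma~\ref{lemma_variable_substitution_multiplication} applied to $\bA=\bM$ and $\bB=\bM^{-1}$ gives $\bM(r)\,\bM^{-1}(r)=(\bM\bM^{-1})(r)=\id(r)=\id$, using $\varphi_r(1)=1$ and $\varphi_r(0)=0$. Hence $\bM(r)$ is invertible with inverse $\bM^{-1}(r)$, which also proves the final identity $\bM^{-1}(r)=(\bM(r))^{-1}$. Alternatively, the necessity follows from determinants: $\det\bM\cdot\det\bM^{-1}=1$ in the ring, and applying $\varphi_r$ shows $\det(\bM(r))\neq0$.

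The only point needing care is the unit characterization. In particular, $\det\bM$ could have a factor vanishing at $r$ (when $r$ is a constant), and then $1/\det\bM$ would not lie in the localization. This is exactly why invertibility of $\bM(r)$ is required. Everything else is formal manipulation with the homomorphism $\varphi_r$.
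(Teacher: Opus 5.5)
Your proof is correct and follows the paper's route. The adjugate formula gives sufficiency, since $\det\bM$ is a unit of $\mathsf{S}_r^{-1}\polyring$ once $\varphi_r(\det\bM)=\det(\bM(r))\neq 0$, and Lemma~\ref{lemma_variable_substitution_multiplication} with $\bA=\bM$, $\bB=\bM^{-1}$ gives necessity together with $\bM^{-1}(r)=(\bM(r))^{-1}$. Your explicit characterization of the units of the localization only spells out a step the paper leaves implicit.
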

We use in Section~\ref{sec_bimodular} special cases of the following example of variable substitution.
\begin{example}
Suppose  $a,b,c,d$ are integers such that $ad-bc\neq 0$. We can define a map $\mathbb{Q}(x)\rightarrow\mathbb{Q}(x)$ by extending the assignment
\begin{align*}
x\mapsto\frac{ax+b}{cx+d}
\end{align*}
to all of $\mathbb{Q}(x)$. This map is invertible, and its inverse is obtained by extending
\begin{align*}
x\mapsto-\frac{dx-b}{cx-a}
\end{align*}
to all of $\mathbb{Q}(x)$. We remark that functions of the form
\begin{align*}
f(x)=\frac{ax+b}{cx+d}
\end{align*}
are called linear fractional transformations, or M\"obius transformations if $a,b,c,d$ are complex numbers and $x$ is a complex variable.
\end{example}
Now suppose $\polyS=\pm\{0,1,x,x+1,2x+1\}$ and $\polyS'=\pm\{0,1,x-1,x,x+1\}$. As an application of the above discussion, we derive a relationship between totally $S$-modular and totally $S'$-modular matrices. 

\begin{lemma}
\label{lem_equivalence_of_S_and_S1_modular}
Fix integers $m,n\geq 1$, and let $I=J=[k]$ for some positive integer $k\leq \min \lbrace m,n\rbrace$. Then the function
\begin{align*}
\bM=\begin{bmatrix}\bA & \bB\\
\bC & \bD
\end{bmatrix}\mapsto\begin{bmatrix}\bA^{-1} & \bA^{-1}\bB\\
-\bC\bA^{-1} & \bD-\bC\bA^{-1}\bB
\end{bmatrix}\left(\frac{x}{x+1}\right),\qquad \bA=\bM_{I,J}
\end{align*}
is a bijection from totally $\polyS'$-modular $m\times n$ matrices $\bM$ such that $\det \bM_{I,J}=\pm(x-1)$, to totally $\polyS$-modular $m\times n$ matrices $\bM'$ such that $\det \bM'_{I,J}=\pm(x+1)$.
\end{lemma}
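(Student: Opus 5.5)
The plan is to factor the map as a pivot on the block $\bA$ followed by the substitution $x\mapsto x/(x+1)$, and to track what each step does to subdeterminants. Write $P(\bM)$ for the pivoted matrix
\[
P(\bM)=\begin{bmatrix}\bA^{-1} & \bA^{-1}\bB\\ -\bC\bA^{-1} & \bD-\bC\bA^{-1}\bB\end{bmatrix},
\]
computed over $\mathbb{Q}(x)$. This makes sense because $\det\bA=\pm(x-1)\neq0$. Since $r=x/(x+1)$ is not a constant, $\mathsf{S}_r^{-1}\polyring=\mathbb{Q}(x)$, so the substitution $\Phi_r$ is defined on all entries of $P(\bM)$.

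\textbf{Step 1: the pivot identity.} I will use the standard fact that pivoting divides every minor by $\det\bA$. Precisely, for row set $R$ and column set $C$,
\[
\det P(\bM)_{R,C}=\pm\frac{\det\bM_{R',C'}}{\det\bA},
\]
where $R'=R\,\triangle\, I$ and $C'=C\,\triangle\, J$, with $|R'|=|C'|$. The case $R=I$, $C=J$ gives $\det P(\bM)_{I,J}=1/\det\bA$. I would prove this via the representation in Remark~\ref{rem_S_modular_total_S_modular_equivalence}. The matrix $(\id\;\bM)$ is row-equivalent, by left multiplication with a block matrix of determinant $1/\det\bA$, to a matrix whose $(\id\;P(\bM))$-form arises after swapping the columns of $I$ and $J$. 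Maximal minors of the two matrices then agree up to sign and the factor $1/\det\bA$. Alternatively, I would expand via the Schur complement formula $\det\begin{bmatrix}\bA&\bB\\ \bC&\bD\end{bmatrix}=\det\bA\det(\bD-\bC\bA^{-1}\bB)$ applied to suitable submatrices. Getting the sign and index bookkeeping right here is the main technical obstacle, but it is classical.

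\textbf{Step 2: the effect of the substitution.} Every subdeterminant of $P(\bM)$ has the form $\pm s/\det\bA$ with $s\in\polyS'$, where $s=1$ covers the block $\bA^{-1}$ itself. By Lemma~\ref{lemma_determinant_homomorphism}, evaluating at $r=x/(x+1)$ gives
\[
\pm\frac{s(r)}{r-1}=\mp(x+1)\,s\!\left(\frac{x}{x+1}\right).
\]
A direct check gives $0\mapsto0$, $1\mapsto x+1$, $x\mapsto x$, $x+1\mapsto 2x+1$ and $x-1\mapsto -1$, all up to sign. Hence every subdeterminant of $\bM'=P(\bM)(r)$ lies in $\polyS$. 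In particular every entry lies in $\polyring$, so $\bM'$ is a totally $\polyS$-modular matrix over $\polyring$. Moreover $\det\bM'_{I,J}=\pm(x+1)$.

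\textbf{Step 3: the inverse map.} I will show the map is a bijection by exhibiting its inverse: pivot on $I,J$ and then substitute $x\mapsto x/(1-x)$, which is the inverse Möbius transformation. Running the same computation in reverse, now dividing by $\det\bM'_{I,J}=\pm(x+1)$, shows that this map sends totally $\polyS$-modular matrices with $\det\bM'_{I,J}=\pm(x+1)$ to totally $\polyS'$-modular matrices with $\det\bM_{I,J}=\pm(x-1)$. Concretely, $(x+1)\mapsto 1/(1-x)$ and $s\mapsto \pm(1-x)\,s(x/(1-x))$ takes $\polyS$ back to $\polyS'$.

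It remains to check that the two maps are mutually inverse, for which two facts suffice. First, the pivot is an involution: $P(P(\bM))=\bM$, a direct block computation. Second, the pivot commutes with substitution, by Lemmas~\ref{lemma_variable_substitution_multiplication} and~\ref{lemma_variable_substitution_inverse}; the required invertibility of $\bA(r)$ holds because $\det\bA(r)=\pm(r-1)\neq0$. Since the two substitutions compose to the identity on $\mathbb{Q}(x)$, both compositions of the maps are the identity.
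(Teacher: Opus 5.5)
Your route is essentially the paper's. The paper passes to the full-row-rank form $(\id\;\bM)$ of Remark~\ref{rem_S_modular_total_S_modular_equivalence} and changes basis by the inverse of the block with determinant $\pm(x-1)$, which divides every maximal minor by that determinant. It then substitutes $x\mapsto x/(x+1)$ and inverts with $x\mapsto -x/(x-1)=x/(1-x)$. Your Steps~2 and~3 match this and are correct, including the set computations, the involution $P(P(\bM))=\bM$, and the commutation of pivoting with substitution.

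One statement in Step~1 needs fixing: the index sets in your pivot identity are wrong. Identify the first $k$ row positions and the first $k$ column positions of $P(\bM)$ with $[k]$. Then the correct identity is
\[
\det P(\bM)_{R,C}=\pm\frac{\det\bM_{(I\setminus C)\cup(R\setminus I),\,(J\setminus R)\cup(C\setminus J)}}{\det\bA}.
\]
In words, the parts of $R$ and $C$ inside the pivot block are exchanged between rows and columns. Your $R\triangle I$, $C\triangle J$ is the principal-minor version of this identity. It agrees with the correct one only when $R\cap I=C\cap J$, and in general $|R\triangle I|\neq|C\triangle J|$. For example, take $k=1$ and $m=n=2$. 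The $(1,2)$ entry of $P(\bM)$ is $\bM_{1,2}/\bM_{1,1}$, whose numerator has row set $\{1\}$ and column set $\{2\}$. Your formula instead gives $R\triangle I=\emptyset$ and $C\triangle J=\{1,2\}$.

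This does not affect the rest of the argument. Step~2 only uses that every minor of $P(\bM)$ equals $\pm s/\det\bA$ for some $s\in\polyS'$, with the empty minor giving $s=1$. The row-operation argument on $(\id\;\bM)$ that you sketch produces the correct index sets automatically.
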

\begin{proof}
We prove the analogous statement for $S'$-modular and $S$-modular matrices: given positive integers $m<n$ and distinct subsets $B,B'\subseteq [n]$ of size $m$, the function $\bA\mapsto\hat{\bA}:=(\bA_{B'}^{-1}\bA)(\tfrac{x}{x+1})$ defines, up to determinant $1$ row operations, a bijection from $S$-modular $m\times n$ matrices $\bA$ such that $\det \bA_{B}=1$ and $\det \bA_{B'}=\pm(x-1)$, to $S'$-modular $m \times n$ matrices $\hat{\bA}$ such that $\det \hat{\bA}_{B}=\pm(x+1)$ and $\det \hat{\bA}_{B'}=1$. To avoid confusion, we emphasize that we are repurposing the letters $m$, $n$, and $\bA$ from what is given in the lemma statement. We can return to the totally $S'$-modular setting by removing the $B$-columns from $\bA^{-1}_B\bA$, and similarly we can return to the totally $S$-modular setting by removing the $B'$-columns from $\hat{\bA}^{-1}_{B'}\hat{\bA}$.

Let $\bA$ be as in the preceding paragraph, and let $\hat{\bA}=(\bA_{B'}^{-1}\bA)(\tfrac{x}{x+1})$. By Lemmas~\ref{lemma_variable_substitution_multiplication} and \ref{lemma_variable_substitution_inverse}, we have $\hat{\bA}=((\bA(\tfrac{x}{x+1}))_{B'})^{-1}\bA(\tfrac{x}{x+1})$. By Lemma~\ref{lemma_determinant_homomorphism}, we have $\det(\bA(\tfrac{x}{x+1}))_{B'}=\pm\tfrac{1}{x+1}$ and by Lemma~\ref{lemma_totally_S_r_modular}, we have $\bA(\tfrac{x}{x+1})$ is a $\pm\{0,1,\tfrac{1}{x+1},\tfrac{x}{x+1},\tfrac{2x+1}{x+1}\}$-modular matrix. So we conclude that $\det\hat{\bA}_B=\pm(x+1)$ and $\det\hat{\bA}_{B'}=1$ and that $\hat{\bA}$ is $S$-modular. 

Conversely, suppose $\hat{\bA}$ is an $S$-modular matrix where $\det\hat{\bA}_B=\pm(x+1)$ and $\det\hat{\bA}_{B'}=1$. We define $\bA:=(\hat{\bA}_{B}^{-1}\hat{\bA})(-\tfrac{x}{x-1})$. A similar argument to above shows that $\bA$ is $S'$-modular, and that $\det \bA_B=1$ and $\det \bA_{B'}=\pm(x-1)$. It is moreover straightforward to check that the maps $\bA\mapsto\hat{\bA}$ and $\hat{\bA}\mapsto\bA$ are inverses of eachother.
\end{proof}

\subsection{Matrix Projections}\label{sec_matrix_projections}

We now define an operation on full row rank matrices which is the matrix analogue of the \emph{elementary quotient} from matroid theory~\cite[p. 272]{oxley2011matroid}.

\begin{definition}
Let $m,n\geq 2$, let $\bT\in(\mathbb{Q}(x))^{m\times n}$ have full row rank, and let $\bw\in (\mathbb{Q}(x))^{m}$ be non-zero. Define $\bT/\bw$ to be any $(m-1)\times n$
matrix with entries in $\mathbb{Q}(x)$ for which
\begin{align*}
\det((\bT/\bw)_{I}) & =\det(\bw\ \bT_{I})
\end{align*}
for all $I\in\binom{\left[n\right]}{m-1}$. If $\bT$ has full column rank and $\bw^\top$ is a non-zero row vector, then define $\bT/\bw^\top$ to be $(\bT^\top/\bw)^\top$. We call $\bT/\bw$ a \emph{matrix projection}.
\end{definition} 

Such a matrix always exists: choose an $m\times m$ matrix $\bU$ with determinant $1$ such that $\bU\bw=\be^1$, where $\be^1$ is the first standard unit vector, then take the bottom $(m-1)$ rows of $\bU\bA$. This definition does not specify $\bT/\bw$ uniquely, but if $\bA$ and $\bA'$ are two representatives of $\bT/\bw$, then there exists a determinant $1$ matrix $\bV$ such that $\bA=\bV\bA'$. We write $\bA=\bT/\bw$ to say that $\bA$ is a representative of $\bT/\bw$. 
If the entries of $\bT$ and $\bw$ are taken from $\polyring$ and $\gcd(\bw)=1$, then we can take the matrix $\bU$ above to be a matrix over $\polyring$, hence we can take $\bT/\bw$ to be a matrix over $\polyring$.
    
\begin{example}
Suppose
\[
\bT=\left(\begin{array}{cccc}
\phantom{-}1 & \phantom{-}1 & \phantom{-}0 & \phantom{-}1\\
-1 & \phantom{-}0 & \phantom{-}1 & -1\\
\phantom{-}1 & \phantom{-}1 & -1 & \phantom{-}0
\end{array}\right),\qquad\bw=\left(\begin{array}{c}
1\\
1\\
x
\end{array}\right).
\]
Let $\bU\in(\polyring)^{3\times 3}$ be the unimodular matrix for which
\[
\bU\left(\bw\ \bT\right)=\left(\begin{array}{ccccc}
\phantom{-}1 & \phantom{-}1 & \phantom{-}1 & \phantom{-}0 & \phantom{-}1\\
\phantom{-}0 & -2 & -1 & \phantom{-}1 & -2\\
\phantom{-}0 & 1-x & 1-x & -1 & -x
\end{array}\right).
\]
We can take $\bT/\bw$ to be the bottom-right $2\times 4$ submatrix:
\[
\bT/\bw=\left(\begin{array}{cccc}
-2 & -1 & \phantom{-}1 & -2\\
1-x & 1-x & -1 & -x
\end{array}\right).
\]
\end{example}

\begin{example}\label{ex_trivially_S_modular}
Let $\polyS=\pm\{0,s_1,s_2,\ldots,s_k\}\subseteq\polyring$. Let $\bT_1,\bT_2,\ldots,\bT_k$ be full row rank unimodular matrices over $\Z$. Define
\[
\bT:=\left(\begin{array}{cccc}
\bT_1 &  &  &\\
 & \bT_2 & &\\
 & & \ddots &\\
 &  & & \bT_k
\end{array}\right),
\]
For each $i=1,2,\ldots,k$, choose a non-zero column $\bt_i$ from the columns of $\bT$ allocated to the submatrix $\bT_i$, and let $\bw=s_1\bt_1+s_2\bt_2+\cdots+s_k\bt_k$. One can show, using multilinearity of the determinant, that $\bT/\bw$ is $S$-modular, and moreover every non-zero element of $\polyS$ is attained up to sign by a maximal subdeterminant of $\bT/\bw$. 

Matrix projections of this form are considered in \cite{glanzer2022notes}. Specifically, \cite[Theorem~1]{glanzer2022notes} states that if $a\geq b>0$ are integers such that $a\neq2b$, then every $\pm\{0,a,b\}$-modular matrix arises up to column permutations by the above construction.
\end{example}
Matrix projections are compatible with variable substitution: 
\begin{lemma}
\label{lemma_matrix_projection_variable_substitution}
Let $r\in\mathbb{Q}(x)$, and let $\bT\in(\mathsf{S}_r^{-1}\polyring)^{m\times n}$ and $\bw\in(\mathsf{S}_r^{-1}\polyring)^m$ be such that $\bT(r)$ has full row rank and $\bw(r)$ is non-zero. Then any representative $\bA$ of $\bT/\bw$ with entries in $\mathsf{S}_r^{-1}\polyring$ satisfies $\bA(r)=\bT(r)/\bw(r)$.
\end{lemma}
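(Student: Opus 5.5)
The plan is to verify the defining property of $\bT(r)/\bw(r)$ directly: one needs $\det(\bA(r)_I)=\det(\bw(r)\ \bT(r)_I)$ for every $I\in\binom{[n]}{m-1}$. We also need $\bA(r)$ to be a legitimate $(m-1)\times n$ matrix over $\mathbb{Q}(x)$; this is automatic, since the entries of $\bA$ lie in $\mathsf{S}_r^{-1}\polyring$ and so $\varphi_r$ can be applied entrywise.

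First I fix $I$. Because $\bA$ represents $\bT/\bw$, we have the identity $\det(\bA_I)=\det(\bw\ \bT_I)$ in $\mathbb{Q}(x)$. Both sides are determinants of square matrices with entries in $\mathsf{S}_r^{-1}\polyring$. Since $\mathsf{S}_r^{-1}\polyring$ is a subring of $\mathbb{Q}(x)$, the two determinants are equal elements of $\mathsf{S}_r^{-1}\polyring$, so we may apply $\varphi_r$ to both. Lemma~\ref{lemma_determinant_homomorphism} then gives
\[
\det(\bA(r)_I)=(\det \bA_I)(r)=(\det(\bw\ \bT_I))(r)=\det(\bw(r)\ \bT(r)_I).
\]
Here I use that substitution commutes with taking submatrices and with adjoining the column $\bw$, because $\varphi_r$ acts entrywise.

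Finally I check that the definition of $\bT(r)/\bw(r)$ applies, i.e.\ its hypotheses hold. These are exactly that $\bT(r)$ has full row rank and $\bw(r)\neq 0$, both of which are assumed in the lemma. Hence $\bA(r)$ satisfies the defining identities and is a representative of $\bT(r)/\bw(r)$.

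I do not expect a real obstacle. The only point needing care is that the definition of a matrix projection is purely in terms of the maximal minors, so no nondegeneracy of $\bA(r)$ has to be proved separately. If full row rank of $\bA(r)$ were wanted anyway, it follows from the displayed identity: $\bT(r)$ has full row rank and $\bw(r)\neq0$, so some $\det(\bw(r)\ \bT(r)_I)$ is nonzero, by extending $\bw(r)$ to a basis using columns of $\bT(r)$.
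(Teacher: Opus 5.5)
Your proof is correct and matches the paper's: fix $I$, apply Lemma~\ref{lemma_determinant_homomorphism} to both sides of $\det(\bA_I)=\det(\bw\ \bT_I)$, and conclude that $\bA(r)$ satisfies the defining identities of $\bT(r)/\bw(r)$. Your extra remarks, on the hypotheses that make $\bT(r)/\bw(r)$ meaningful and on the full row rank of $\bA(r)$, are sound but not needed.
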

\begin{proof}
Let $I\in\binom{[n]}{m-1}$. Then we have
\[
\det((\bA(r))_I)=(\det(\bA_I))(r)=(\det(\bw\;\bT_I))(r)=\det(\bw(r)\;(\bT(r))_I)
\]
where the first and last equalities hold by Lemma~\ref{lemma_determinant_homomorphism}.
\end{proof}

The following lemma is used in the proofs of Theorem~\ref{thm_opt_polyring} and Theorem~\ref{thm_bimodular_matrix_projection}. For a full rank integer matrix $\bA$, let $\Delta_r(\bA)$ denote the maximum absolute value of any $r\times r$ submatrix of $\bA$ where $r=\rank \bA$.

\begin{lemma}
\label{lemma_bound_subdeterminants_matrix_projection}
Let $m\geq 2$, let $\bT$ be a full row rank integer matrix with $m$ rows, and let $\bp,\bq\in\Z^m$ be non-zero and non-parallel. Then
\[
\Delta_m(\bp\;\bq\;\bT)\leq 2\cdot\Delta_{m-1}(\bT/\bp)\cdot\Delta_{m-1}(\bT/\bq).
\]

\end{lemma}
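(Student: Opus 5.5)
The natural tool is the 3-term Grassmann--Pl\"ucker relation (Lemma~\ref{lemma_3_term_GP_relations}). Fix an $m\times m$ submatrix $(\bp\;\bq\;\bT_I)$ with $I\in\binom{[n]}{m-2}$ attaining $\Delta_m(\bp\;\bq\;\bT)$; we may assume its determinant is non-zero. The first step is to produce two further columns $\br,\bs$ of $\bT$. Since $\bT$ has full row rank and $(\bp\;\bq\;\bT_I)$ is invertible, the columns $\bT_I$ are linearly independent. We can therefore extend $\bT_I$ to a basis of $\R^m$ using two columns $\br,\bs$ of $\bT$, which gives $\det(\br\;\bs\;\bT_I)\neq 0$.

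Next, apply Lemma~\ref{lemma_3_term_GP_relations} with $\bu=\bp$, $\bv=\bq$, $\bA=\bT_I$:
\[
\det(\bp\;\bq\;\bT_I)\det(\br\;\bs\;\bT_I)=\det(\bp\;\br\;\bT_I)\det(\bq\;\bs\;\bT_I)-\det(\bp\;\bs\;\bT_I)\det(\bq\;\br\;\bT_I).
\]
By the definition of matrix projection, $\det(\bp\;\br\;\bT_I)=\det((\bT/\bp)_{\{r\}\cup I})$ up to the column ordering sign, and the same holds for the other three factors on the right. Hence each factor involving $\bp$ is at most $\Delta_{m-1}(\bT/\bp)$ in absolute value, and each factor involving $\bq$ is at most $\Delta_{m-1}(\bT/\bq)$. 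Taking absolute values and using the triangle inequality bounds the right-hand side by $2\,\Delta_{m-1}(\bT/\bp)\,\Delta_{m-1}(\bT/\bq)$. Since $\det(\br\;\bs\;\bT_I)$ is a non-zero integer, its absolute value is at least $1$, and dividing gives the claim.

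The step needing care is the existence of the two columns $\br,\bs$. This is where full row rank of $\bT$ is used: its columns span $\R^m$, so any independent set of $m-2$ columns extends by two columns to a basis. The non-parallel hypothesis only guarantees that $\Delta_m(\bp\;\bq\;\bT)$ can be positive; if it is $0$ the bound is trivial. A minor technical point is $m=2$, where $\bT_I$ is empty and the relation reduces to the classical $2\times2$ identity. Sign conventions in matching $\det(\bw\;\bT_J)$ with the projection minors do not matter, since we only use absolute values.
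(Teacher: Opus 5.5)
\paragraph{Gap: you bound only one of three kinds of minors.}
Your argument for minors of the form $(\bp\;\bq\;\bT_I)$ is correct, and it is exactly the first case of the paper's proof. However, it does not prove the lemma. $\Delta_m(\bp\;\bq\;\bT)$ is the maximum over \emph{all} $m\times m$ submatrices of $(\bp\;\bq\;\bT)$, and you assume without justification that the maximum is attained by one containing both $\bp$ and $\bq$. Two other kinds of minors are left unbounded: those using exactly one of $\bp,\bq$, and the minors $\det\bT_B$ of $\bT$ alone. Your remark that the bound is trivial when $\Delta_m(\bp\;\bq\;\bT)=0$ reflects this misreading; since $\bT$ has full row rank, this quantity is always at least $1$.

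The mixed case is immediate. $\det(\bp\;\bT_J)$ is a maximal minor of $\bT/\bp$. Also $\Delta_{m-1}(\bT/\bq)\geq 1$, because $\bq\neq\zero$ extends to a basis using columns of $\bT$ and the minors $\det(\bq\;\bT_J)$ are integers. The minors of $\bT$ alone are the substantive case. This is exactly where the non-parallel hypothesis is needed, and it is the case the paper relies on to conclude that $\bT$ is bimodular in the proof of Theorem~\ref{thm_bimodular_matrix_projection}.

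\paragraph{How to handle the minors of $\bT$.}
Take $\bT_B$ invertible. Since $\bp,\bq$ are linearly independent, the exchange lemma gives $I\subseteq B$ with $|I|=m-2$ such that $(\bp\;\bq\;\bT_I)$ is invertible. Let $\br,\bs$ be the columns indexed by $B\setminus I$. The same relation from Lemma~\ref{lemma_3_term_GP_relations}, with the roles reversed, now bounds $|\det(\br\;\bs\;\bT_I)|=|\det\bT_B|$ by $2\,\Delta_{m-1}(\bT/\bp)\,\Delta_{m-1}(\bT/\bq)$. You divide by the non-zero integer $\det(\bp\;\bq\;\bT_I)$, so integrality of $\bp,\bq$ is used here.

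Some such step is indispensable. Take $m=2$, $\bT=\diag(k,1)$ and $\bp=\bq=\be^1$. Then $\bT/\bp=(0\;\;1)$, so the right-hand side equals $2$, while $\Delta_2(\bT)=k$. The inequality therefore fails for $k\geq 3$ once non-parallelism is dropped. Your argument never uses non-parallelism essentially; it goes through verbatim when $\bp=\bq$. So it cannot control these minors, and the proof is incomplete until this case is added.
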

\begin{proof}
It suffices to only consider invertible $m\times m$ submatrices of $(\bp\;\bq\;\bT)$ which contain either both $\bp$ and $\bq$ or neither $\bp$ nor $\bq$. Suppose $I$ indexes $m-2$ columns of $\bT$ so that $(\bp\;\bq\;\bT_I)$ is invertible. Because $\bT$ has full row rank, there exist columns $\br,\bs$ of $\bT$ such that $(\br\;\bs\;\bT_I)$ is invertible. By the 3-term Grassmann-Pl\"ucker relations (Lemma~\ref{lemma_3_term_GP_relations}), we have
\[
\det(\bp\ \bq\ \bT_{I})\det(\br\ \bs\ \bT_{I})=\det(\bp\ \br\ \bT_{I})\det(\bq\ \bs\ \bT_{I})-\det(\bp\ \bs\ \bT_{I})\det(\bq\ \br\ \bT_{I}).
\]
In particular, we see that $\left|\det(\bp\ \bq\ \bT_{I})\right|$ satisfies the bound since $\det(\br\ \bs\ \bT_{I})$ is a non-zero integer.

Now suppose $B$ indexes $m$ columns of $\bT$ so that $\bT_B$ is invertible. Since $\bp,\bq$ are non-zero and non-parallel, there exists $I\subseteq B$ of size $m-2$ such that $(\bp\;\bq\;\bT_I)$ is invertible. Let $\br,\bs$ be the columns of $\bT$ indexed by $B\backslash I$. Again by the 3-term Grassmann-Pl\"ucker relations, we get $\left|\det(\bT_{B})\right|=\left|\det(\br\ \bs\ \bT_{I})\right|$ satisfies the bound as $\det(\bp\ \bq\ \bT_{I})$ is a non-zero integer.
\end{proof}

	\section{From $\Z$ to $\polyring$}
	\label{sec_bijection}
	Let $\polyS\subseteq \polyring$ be finite and let $\intvar\in\Z$. If one is interested in stuying totally $\polyS(\intvar)$-modular matrices, then a starting point might be to study totally $\polyS$-modular matrices since every such matrix $\bM$ has the property that $\bM(\intvar)$ is totally $\polyS(\intvar)$-modular by Lemma~\ref{lemma_totally_S_r_modular}. However, there are two problems with this approach. First, not every totally $\polyS(\intvar)$-modular matrix is of the form $\bM(\intvar)$ for some totally $\polyS$-modular matrix $\bM$. Second, for a given totally $\polyS(\intvar)$-modular matrix $\bN$, it is possible that there are two distinct totally $\polyS$-modular matrices $\bM,\bM'$ such that $\bM(\intvar)=\bM'(\intvar)=\bN$. Put another way, it is not always the case that, for a given $\intvar\in\Z$, the totally $\polyS$-modular matrices and the totally $\polyS(\intvar)$-modular matrices are in bijection via the map $\bM\mapsto\bM(\intvar)$. The goal of this section is to characterize those $\intvar\in\Z$ for which this failure occurs.

	\begin{example}
		Consider the set $\polyS = \pm\lbrace 0,\indet,\indet + 1,2\indet + 1\rbrace$ and the set $S(1) = \pm \lbrace 0,1,2,3\rbrace$. Take the matrix 
		\begin{align*}
			\polyM(1) = \begin{pmatrix} 1 & 2 & 1 & 1\\ 
				2 & 2 & 2 & 1 \\ 
				1 & 2 & 2 & 2 \\ 
				1 & 1 & 2 & 1 
			\end{pmatrix}\in\Z^{4\times 4}.
		\end{align*}
		This matrix is totally $S(1)$-modular. It turns out that the unique way, up to a sign, to find the associate polynomials is given by the map that maps $\indet$ to $1$. 
		However, the then uniquely defined corresponding matrix
		\begin{align*}
			\polyM = \begin{pmatrix} \indet & \indet+1 & \indet & \indet\\ 
				\indet+1 & \indet+1 & \indet+1 & \indet \\ 
				\indet & \indet+1 & \indet+1 & \indet+1 \\ 
				\indet & \indet & \indet+1 & \indet 
			\end{pmatrix}\in\polyring^{4\times 4} 
		\end{align*}
		satisfies $\det\polyM = 1$ and is therefore not totally $\polyS$-modular.
	\end{example}
	
%	\section{The abstract setting and results}
%	
%	We make the remarks from the previous section precise:  
	
\subsection{Proof of Theorem~\ref{thm_from_integer_to_polyring}}
	For $S\subseteq \polyring$, let $\mathcal{M}(S)$ denote the set of all totally $S$-modular matrices. Recall that, given $\intvar\in\Z$, $\varphi_{\intvar}:\polyring\rightarrow \Z$ is the homomorphism which substitutes $x$ with $\intvar$, and $\Phi_{\intvar}:\mathcal{M}(\polyring)\rightarrow\mathcal{M}(\Z)$ is the function $\polyM\mapsto\polyM(\intvar)$ which applies $\varphi_{\intvar}$ entrywise to matrices in $\mathcal{M}(\polyring)$. We have by Lemma~\ref{lemma_totally_S_r_modular} that $\Phi_{\intvar}(\mathcal{M}(\polyS))\subseteq\mathcal{M}(\polyS(\intvar))$ for each $S\subseteq \polyring$.
	
	\begin{proof}[Proof of Theorem~\ref{thm_from_integer_to_polyring}]
		First, we show the claim for those $\intvar\in\Z$ with the property that $\varphi_{\intvar}$ defines a bijection between $\polyS$ and $S(\intvar)$. For such $\intvar\in\Z$, the map $\Phi_{\intvar}|_{\mathcal{M}(S)}$ is an injection. Hence, the statement boils down to showing that $\Ima\Phi_{\intvar}|_{\mathcal{M}(S)} = \mathcal{M}(S(\intvar))$ if and only if $\intvar \notin I(\polyS)$ for those values of $\intvar$. We will show the negated statement which is $\Ima\Phi_{\intvar}|_{\mathcal{M}(S)} \subsetneq \mathcal{M}(S(\intvar))$ if and only if $\intvar \in I(\polyS)$.
		
		Suppose there exists $\bN$ totally $S(\intvar)$-modular such that $\bN\notin \Ima\Phi_{\intvar}|_{\mathcal{M}(\polyS)}$. Since $\varphi_{\intvar}|_S$ is a bijection, we can construct a matrix $\polyM$ from $\bN$ by applying $(\varphi_{\intvar}|_S)^{-1}$ entrywise to $\bN$ so that $\bN = \bM(\intvar)$. Our assumption that $\bN\notin \Ima\Phi_{\intvar}|_{\mathcal{M}(\polyS)}$ translates to $\polyM$ not being totally $\polyS$-modular. So there exists a submatrix $\tilde{\polyM}$ of $\polyM$ which is a forbidden submatrix for $\polyS$. This implies $\det\tilde{\polyM}\in F(\polyS)$. However, by Lemma~\ref{lemma_determinant_homomorphism}, we get $\varphi_\intvar(\det\tilde{\polyM}) = \det\Phi_{\intvar}(\tilde{\polyM}) = \det\tilde{\bM}(\intvar)\in S(\intvar)$ since $\tilde{\bM}(\intvar)$ is a submatrix of $\bN = \bM(\intvar)$ which is totally $S(\intvar)$-modular. In other words, we have $(\det\tilde{\polyM})(\intvar) = s(\intvar)$ for some $s\in\polyS$, which implies $\intvar\in I(\polyS)$.
		
		Now suppose that $\intvar\in I(\polyS)$, that is, there exist $s\in \polyS$ and $f\in F(\polyS)$ such that $s(\intvar) = f(\intvar)$. Let $\polyM$ be a forbidden submatrix for $\polyS$ such that $\det\polyM = f$. Then, using Lemma~\ref{lemma_determinant_homomorphism}, we calculate that $\det\bM(\intvar) = \det\Phi_{\intvar}(\polyM) = \varphi_{\intvar}(f) = f(\intvar) = s(\intvar) \in S(\intvar)$. So $\bM(\intvar)$ is totally $S(\intvar)$-modular by Lemma~\ref{lemma_totally_S_r_modular}. Since $\varphi_{\intvar}|_S$ is a bijection, $\bM$ is the unique matrix with entries in $\polyS$ which maps to $\bM(\intvar)$ under $\Phi_{\intvar}$, so we conclude that $\bM(\intvar)\notin \Ima\Phi_{\intvar}|_{\mathcal{M}(\polyS)}$. This settles the claim for all $\intvar\in\Z$ that give rise to a bijection $\varphi_{\intvar}$ between $\polyS$ and $S(\intvar)$.
		
		We are left with the case when $\varphi_{\intvar}$ is not a bijection between $\polyS$ and $S(\intvar)$ for $\intvar\in\Z$. Fix such an $\intvar\in\Z$. By construction of $S(\intvar)$, the map $\varphi_{\intvar}$ is always surjective between $\polyS$ and $S(\intvar)$. So, to be not bijective, $\varphi_{\intvar}$ is not injective between $\polyS$ and $S(\intvar)$. This implies that there exist distinct $s_1,s_2\in \polyS$ such that $s_1(\intvar) = s_2(\intvar)$. Moreover, since $s_1$ and $s_2$ can be considered as $1\times 1$ matrices, we already know that $\Phi_{\intvar}|_{\mathcal{M}(S)}$ is not injective. Hence, it suffices to show that $\intvar\in I(\polyS)$. Suppose that $s_2 = 0$, i.e., $s_2$ is the zero-polynomial. There exists a minimal $k\in\N_{\geq 2}$ such that $s_1^k\notin S$. Then $\diag(s_1,\ldots,s_1)\in\polyring^{k\times k}$ is a forbidden submatrix for $\polyS$. The determinant of this matrix equals $s_1^k$ and satisfies $s_1^k(\intvar) = s_2^k(\intvar) = 0$, which gives us $\intvar \in I(\polyS)$ as $0\in\polyS$. Suppose now that neither $s_1$ nor $s_2$ is the zero-polynomial. Consider the matrix
		\begin{align*}
			\polyM = \begin{pmatrix}
				s_1 & s_2 \\ 
				s_ 2 & s_1 
			\end{pmatrix}\in\polyring^{2\times 2}.
		\end{align*}
		This matrix has determinant $s_1^2 - s_2^2$. If $s_1^2-s_2^2\notin S$, then $\polyM$ is a forbidden submatrix for $\polyS$ and $(s_1^2-s_2^2)(\intvar) = s_1^2(\intvar) - s_2^2(\intvar) = 0$ gives us $\intvar \in I(\polyS)$ as $0\in \polyS$. If $s_1^2-s_2^2\in S$, apply the same argument for $\tilde{s}_1 := s_1^2-s_2^2$ and $\tilde{s}_2 = 0$ from above to recover that $\intvar \in I(\polyS)$.
		
		To see the last statement, we have by assumption that $\polyS$ is finite, and hence $F(\polyS)$ is finite by Lemma~\ref{lemma_finite_number_of_determinants}. An integer $\intvar$ lies in $I(\polyS)$ if and only if it is an integral root of a polynomial of the form $s-f$ where $s\in\polyS$ and $f\in F(\polyS)$. Note that every such polynomial is nonzero, since $\polyS$ and $F(\polyS)$ are disjoint by definition of $F(\polyS)$. It follows that
		\[
		\left| I(\polyS)\right| \leq \left|S\right|\left|F(\polyS)\right|\cdot\max_{s\in \polyS\cup F(\polyS)}\deg(s).\qedhere
		\]
	\end{proof}
	
\subsection{Discussion with an Example}
		Let $\polyS = \pm\lbrace 0,1,\indet,\indet + 1,2\indet + 1\rbrace$. One can compute that $I(\polyS) = \lbrace -3,-2,-1,0,1,2\rbrace$; see Section~\ref{ss_computing_I(S)}. Hence, by Theorem~\ref{thm_from_integer_to_polyring} the evaluation map $\varphi_{\intvar}$ gives rise to an isomorphism for all integer values $\intvar\notin\lbrace -3,-2,-1,0,1,2\rbrace$. 
		
		Let $\intvar \in\lbrace -3,2\rbrace$. Then we have $S(\intvar) = \pm\lbrace 0,1,2,3,5\rbrace$. Observe that the map $\varphi_{\intvar}$ is a bijection between $\polyS$ and $S(\intvar)$. Theorem~\ref{thm_from_integer_to_polyring} implies that there exists a forbidden submatrix for $\polyS$ that evaluates to a totally $S(\intvar)$-modular matrix. Such a forbidden submatrix for $\polyS$ is given by
		\begin{align}
			\label{matrix_example_2}
			\polyM = \begin{pmatrix}
				x & 1 \\ 
				1 & x
			\end{pmatrix}\in\polyring^{2\times 2}.
		\end{align}
		This matrix has determinant $x^2-1$, which is forbidden. If $\intvar=2$ then $\det\bM(\intvar) = 3\in S(\intvar)$, so in this case the set of totally $S(\intvar)$-modular matrices is strictly richer than the set of totally $\polyS$-modular matrices since $\Ima\Phi_{\intvar}|_{\mathcal{M}(\polyS)}\subsetneq \mathcal{M}(S(\intvar))$ by Lemma~\ref{lemma_totally_S_r_modular}.
		
		Let $\intvar\in\lbrace -2,-1,0,1\rbrace$. Then we have 
		\begin{enumerate}
			\item $S(0) = \pm \lbrace 0,1\rbrace = S(-1)$ and
			\item $S(1) = \pm\lbrace 0,1,2,3\rbrace = S(-2)$.
		\end{enumerate}
		There is no bijection between the entries. This implies that there is not a unique way how to identify totally $S(\intvar)$-modular matrices with totally $\polyS$-modular matrices. When $\intvar = 0$, the matrix in (\ref{matrix_example_2}) is an example of a forbidden submatrix for $\polyS$ whose evaluation is totally $S(\intvar)$-modular. However, the matrix $\bM(0)$ remains an evaluation of some other totally $\polyS$-modular matrix since $\pm \lbrace 0,1\rbrace\subseteq \pm \lbrace 0,1,\indet,\indet + 1,2\indet + 1\rbrace$. The same argument implies that all totally $S(0)$-modular matrices arise as an evaluation of a totally $\polyS$-modular matrix. In other words, we have $\Ima\Phi_{\intvar}|_{\mathcal{M}(\polyS)} = \mathcal{M}(S(0))$ but not an injection. This has the consequence that our results for totally $\polyS$-modular matrices also imply results for totally $S(0)$-modular matrices.
		
		For $S(1) = \pm\lbrace 0,1,2,3\rbrace$, it would be interesting to know which matrices are evaluations of totally $\polyS$-modular matrices and which are not. The proof of Theorem~\ref{thm_from_integer_to_polyring} does not give an answer. It only guarantees the existence of a forbidden submatrix for $\polyS$ whose evaluated polynomial agrees with one of the evaluated polynomials in $S(1)$. The example (\ref{matrix_example_2}) with determinant $x^2-1$ is such a matrix but its evaluation can also be obtained by the all-ones matrix. A matrix that is totally $S(1)$-modular and is not an evaluation is given below
		\begin{align*}
			\begin{pmatrix}
				3 & 2 \\
				2 & 1
			\end{pmatrix}.
		\end{align*}
		The only matrices that evaluate to this matrix are
		\begin{align*}
			\begin{pmatrix}
				2\indet + 1 & \indet + 1\\ 
				\indet + 1 & 1
			\end{pmatrix}\text{ and }\begin{pmatrix}
				2\indet + 1 & \indet + 1\\ 
				\indet + 1 & \indet
			\end{pmatrix}.
		\end{align*}
		The determinants are in both cases quadratic so both matrices are forbidden submatrices for $\polyS$.
		
\subsection{Computing $I(\polyS)$}\label{ss_computing_I(S)}
	Following Theorem~\ref{thm_from_integer_to_polyring}, we demonstrate how to calculate all intersections between the polynomial functions in a given set $\polyS$ and the polynomial functions in $F(\polyS)$. To do so, we need to determine $F(\polyS)$ for a given $\polyS$. An initial superset of $F(\polyS)$ can be obtained by computing all polynomials $\det\polyM$ that satisfy the Desnanont-Jacobi identity, Lemma~\ref{lemma_desnanot_jacobi}. Call the set of those polynomials $DJ(\polyS)$. Since every element in $F(\polyS)$ has to arise as one of the solutions of the Desnanont-Jacobi identity, we have $F(\polyS)\subseteq DJ(\polyS)\backslash S \cap \polyring$. However, as we will see in the proof of Lemma~\ref{lemma_determinants_submatrices_1_a_a+1_2a+1}, this inclusion may not be tight, i.e., there could be polynomials in $DJ(\polyS)\backslash S \cap \polyring$ that do not correspond to an actual matrix. 
	To deal with these polynomials, we prove the following lemma which holds under more general assumptions and will also be used in the next section. 
%	For that purpose, we remark that the ring $\polyring$ is a unique factorization domain. So every element in $\polyring$ admits a unique factorization into smaller irreducible elements up to multiplying with $\pm 1$. Therefore, the notion of greatest common divisors carries naturally over from $\Z$ to $\polyring$. 
	Analogously to $\Z$, elements in $\polyring$ are relatively prime if their greatest common divisor is $1$.
	\begin{lemma}{\cite[Lemma~6]{celayakuhlmannweismantel2024matricespolynomial}}
	\label{lemma_equal_submatrices_tu}
		Let $\polyS \subseteq\polyring$ be such that all non-zero $y,z\in \polyS$ are relatively prime or $\left|y\right| = \left|z\right|$ and $2\notin \polyS$. Let $n\geq 3$ and $\polyM\in\polyring^{n\times n}$ be invertible and every $(n-1)\times (n-1)$ submatrix of $\polyM$ is totally $\polyS$-modular. Then either the invertible $(n-1)\times (n-1)$ submatrices of $\polyM$ are totally unimodular or there exist invertible submatrices $\bA_{(n-2)},\tilde{\bA}_{(n-2)}\subseteq\bM$ such that
		\begin{align*}
			\left|\det\bA_{(n-2)}\right|\neq\bigl|\det\tilde{\bA}_{(n-2)}\bigr|. 
		\end{align*}
	\end{lemma}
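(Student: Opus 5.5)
The plan is to prove the contrapositive. Assume every invertible $(n-2)\times(n-2)$ submatrix of $\polyM$ has the same absolute determinant $d\in\polyS\setminus\{0\}$. We then show that every invertible $(n-1)\times(n-1)$ submatrix of $\polyM$ is totally unimodular.

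First we check that invertible $(n-2)\times(n-2)$ submatrices exist. Since $\polyM$ is invertible, Laplace expansion gives an invertible $(n-1)\times(n-1)$ submatrix $\bB$, and expanding $\bB$ in turn gives an invertible $(n-2)\times(n-2)$ submatrix $\bA$.

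Next, fix any invertible $\bB=\polyM_{\backslash\{i\},\backslash\{j\}}$. Every $(n-2)\times(n-2)$ submatrix of $\polyM$, and in particular of $\bB$, has determinant in $\{0,\pm d\}$. Apply the Desnanot--Jacobi identity (Lemma~\ref{lemma_desnanot_jacobi}) to $\polyM$ with an invertible $\bA_{(n-2)}$ and the two removed rows $i_1,i_2$ and columns $j_1,j_2$. The four entries $\det\bA[i_k,j_l]$ are $(n-1)$-minors of $\polyM$, hence lie in $\polyS$. The identity becomes
\[
\det\polyM\cdot(\pm d)=s_1s_2-s_3s_4,\qquad s_i\in\polyS.
\]
Applying the same identity inside $\bB$, for a suitable invertible $(n-3)$-minor, yields $\det\bB\cdot c=(\pm d)^2-(\pm d)^2\in\{0,\pm 2d^2\}$, where $c\in\polyS$ is that $(n-3)$-minor.

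The key arithmetic step is as follows. Since $\det\bB\in\polyS\setminus\{0\}$, we have $c\det\bB=\pm 2d^2$ with both factors in $\polyS$. Because nonzero elements of $\polyS$ are pairwise coprime unless equal in absolute value, and $2\notin\polyS$, unique factorization in $\polyring$ should force $|c|=|\det\bB|$ together with a contradiction with $2$. More precisely, $c\det\bB=\pm2d^2$ with $c,\det\bB,d$ each either coprime to one another or of equal absolute value, which leaves only the options $|c|=|d|=|\det\bB|$ (giving $d^2=\pm2d^2$, impossible) or $d$ a unit. If $d$ is a unit, then $|c|\cdot|\det\bB|=2$, which requires $2$ or a factor involving $2$ to be in $\polyS$; this is excluded unless $\polyS$ contains $\pm2$, and $\pm2$ is not in $\polyS$ by hypothesis. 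Hence every such $2\times2$ Desnanot--Jacobi determinant inside $\bB$ built on an invertible $(n-3)$-minor must vanish, so $(\pm d)^2-(\pm d)^2=0$ always.

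The main obstacle is turning this into total unimodularity of $\bB$, which I would do by downward induction on minor size. With $d$ shown to be a unit, all $(n-2)$-minors lie in $\{0,\pm1\}$. For smaller minors, I would again use Desnanot--Jacobi on a $k\times k$ submatrix of $\bB$ whose $(k-1)$-minors are known to lie in $\pm\{0,1\}$, together with the coprimality hypothesis, to rule out values other than $0,\pm1$. Equivalently, one can argue that if some minor of $\bB$ were a non-unit $s\in\polyS$, then pivoting on it would produce $(n-2)$-minors of differing absolute values, namely $s$ times units versus units, contradicting the standing assumption. This pivoting and bookkeeping step, in particular making sure that an invertible $(n-2)$-minor containing a given non-unit minor exists, is where the real care is needed.
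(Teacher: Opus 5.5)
Your proposal has a genuine gap at the key arithmetic step. In the Desnanot--Jacobi identity inside $\bB$, the four $(n-2)$-minors lie in $\{0,\pm d\}$, not only in $\{\pm d\}$. Hence $c\det\bB\in\{0,\pm d^2,\pm 2d^2\}$, and you dropped the value $\pm d^2$, which occurs whenever exactly one of the two products vanishes.

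Once this value is restored, $|c|=|\det\bB|=|d|$ is fully compatible with the coprimality hypothesis, so this identity alone does not force $d$ to be a unit. Your case analysis in fact proves too much. Take $\polyM$ to be the $3\times 3$ identity: then $d=c=\det\bB=1$ and $c\det\bB=d^2$, yet your argument would exclude this case. Your conclusion that the right-hand side ``must vanish'' also contradicts $c\neq 0\neq\det\bB$. Similarly, when $d$ is a unit the correct conclusion is $|c|=|\det\bB|=1$, not a contradiction. So the claim ``with $d$ shown to be a unit'', on which your final paragraph rests, is never established.

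The missing idea is an invertible $(n-3)\times(n-3)$ minor $c$ of $\bB$ with $|c|\neq|d|$. For such a $c$ the corrected analysis does give a contradiction. If $|c|=1$, then $d$ is a non-unit and $\det\bB\in\pm\{d^2,2d^2\}$; this cannot lie in $\polyS$, since it shares the factor $d$ with $d\in\polyS$ without having the same absolute value. Otherwise $c$ is coprime to $d$, so it divides $2$, which $2\notin\polyS$ forbids.

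The paper obtains this $c$ by induction on $n$. Apply the lemma for $n-1$ to an invertible, non-totally-unimodular $\bB$. It cannot return the first alternative: then all proper minors of $\bB$ lie in $\{0,\pm1\}$, so $\det\bB\in\{0,\pm1,\pm2\}$, and $2\notin\polyS$ would make $\bB$ totally unimodular. Hence it returns two $(n-3)$-minors of different absolute value, and one of them differs from $|d|$.

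If you want to avoid induction, you can instead push $|d|$ downward. Assume $d$ is a non-unit. The corrected case analysis then shows that every invertible $(n-3)$-minor of $\bB$, and $\det\bB$ itself, has absolute value $|d|$. Repeat this inside invertible $(n-2)\times(n-2)$ submatrices, and so on, using that every invertible minor of an invertible matrix extends to invertible minors of all larger sizes. This shows that all invertible minors of $\bB$, down to the entries, have absolute value $|d|$. An invertible $2\times 2$ minor would then lie in $\pm\{d^2,2d^2\}$, a contradiction.

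Only after this step do you know that $d$ is a unit and $|\det\bB|=1$. At that point your final descent to total unimodularity goes through: Desnanot--Jacobi with the larger minors in $\{0,\pm1\}$, together with $2\notin\polyS$.
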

	\begin{proof}
		We prove the statement by induction. Let $n=3$. The $(n-2)\times (n-2)$ submatrices of $\polyM$ are the entries of $\polyM$. Assume that every entry of $\polyM$ is in $\pm\lbrace 0, s\rbrace$ for some $s\in S$. If $s=1$, we get that $\bM\in\pm\lbrace 0,1\rbrace^{3\times 3}$. Since $2\notin S$, the $2\times 2$ submatrices of $\bM$ are totally unimodular. If we assume $s\neq 1$, we have  $\det\bA_{(2)}\in\pm\lbrace s^2, 2s^2\rbrace$ for an invertible submatrix $\bA_{(2)}\subseteq\polyM$ as every entry is contained in $\pm\lbrace 0, s\rbrace$. Since the non-zero elements in $S$ are pairwise relatively prime and $s\in S$, we get $\det\bA_{(2)}\notin S$, contradicting that $\bA_{(2)}$ is totally $S$-modular. 
		
		So we suppose that $n\geq 4$. Assume that every invertible $(n-2)\times (n-2)$ submatrix of $\polyM$ has determinant $s$ or $-s$ for some $s\in S$. We show that every $(n-1)\times (n-1)$ submatrix of $\polyM$ is totally unimodular by a contradiction. For that purpose, suppose that $\bA_{(n-1)}\subseteq\polyM$ is an invertible $(n-1)\times (n-1)$ submatrix that is not totally unimodular. We apply the induction hypothesis to $\bA_{(n-1)}\subseteq\bM$. Since $\bA_{(n-1)}$ is not totally unimodular and $\left|\det\bA_{(n-1)}\right|\neq 2$, the induction hypothesis guarantees the existence of an invertible submatrix $\bA_{(n-3)}\subseteq\bA_{(n-1)}$ with $\bigl|\det\bA_{(n-3)}\bigr|\neq  \left|s\right|$. 
		We apply the Desnanot-Jacobi identity, Lemma~\ref{lemma_desnanot_jacobi}, to $\bA_{(n-3)}\subseteq \bA_{(n-1)}$ and obtain that 
		\begin{align*}
			\det\bA_{(n-1)} = \frac{1}{\det\bA_{(n-3)}} \left( s_1s_2-s_3s_4\right)\in\pm\frac{1}{\det\bA_{(n-3)}}\cdot\lbrace s^2, 2s^2\rbrace
		\end{align*}
		with $s_i\in\pm\lbrace 0,s\rbrace$ for $i=1,2,3,4$. If $\left|\det\bA_{(n-3)}\right| \neq 1$, then $\det\bA_{(n-3)}$ does not divide $s^2$ or $2s^2$ as the non-zero elements in $S$ are pairwise relatively prime and $2\notin S$. However, this implies that $\det\bA_{(n-1)}\notin \polyring$, a contradiction. If $\left|\det\bA_{(n-3)}\right| =1$, we get that $\det\bA_{(n-1)}\in \pm\lbrace s^2,2s^2\rbrace$. This gives us a contradiction since $\det\bA_{(n-1)}$ and $s$ are not relatively prime. 
	\end{proof}
	
	Let $F_2(\polyS)\subseteq F(\polyS)$ be the subset of determinants of forbidden submatrices for $\polyS$ of size $2\times 2$.  
	We are in the position to determine the set of possible determinants given by the forbidden submatrices for the specific set $\polyS=\pm\lbrace 0,1,\indet,\indet+1,2\indet+1\rbrace$.
	\begin{lemma}{\cite[Lemma~7]{celayakuhlmannweismantel2024matricespolynomial}}
		\label{lemma_determinants_submatrices_1_a_a+1_2a+1}
		Let $\polyS=\pm\lbrace 0,1,\indet,\indet+1,2\indet+1\rbrace$. Then we have $F(\polyS) = F_2(\polyS)$.
	\end{lemma}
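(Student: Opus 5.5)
The inclusion $F_2(\polyS)\subseteq F(\polyS)$ holds by definition, so the task is to show that every forbidden submatrix $\polyM\in\polyring^{n\times n}$ with $n\geq 3$ has a determinant that is already the determinant of some $2\times 2$ forbidden submatrix.

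First I will compute $DJ(\polyS)$. By Lemma~\ref{lemma_desnanot_jacobi}, any determinant of a forbidden $n\times n$ matrix with an invertible $(n-2)\times(n-2)$ minor has the form $(s_1s_2-s_3s_4)/d$ with $s_i\in\polyS$ and $d\in\polyS\setminus\{0\}$. For $n=2$ we have $d=1$, and $F_2(\polyS)$ is the set of all values $s_1s_2-s_3s_4\notin\polyS$. Any such value is realized by the $2\times 2$ matrix with entries $s_1,s_3,s_4,s_2$, since its $1\times1$ minors lie in $\polyS$. Thus $F_2(\polyS)$ is completely explicit, and it contains every quadratic or non-$\polyS$ linear combination of this form.

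For $n\geq 3$, suppose $\polyM$ is forbidden and $\det\polyM\neq 0$; the case $\det\polyM=0$ cannot occur, since $0\in\polyS$. I will split into cases according to Lemma~\ref{lemma_equal_submatrices_tu}. Its hypotheses hold because the non-zero elements $1,\indet,\indet+1,2\indet+1$ of $\polyS$ are pairwise coprime in $\polyring$ and $2\notin\polyS$.

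In the first case, all invertible $(n-1)\times(n-1)$ submatrices are totally unimodular. Then I will apply Desnanot--Jacobi with $d=\pm1$ and $s_i\in\pm\{0,1\}$, which gives $\det\polyM\in\pm\{0,1,2\}$. Since $\det\polyM\notin\polyS$, we get $\det\polyM=\pm2$, and $\pm2$ is the determinant of the $2\times2$ matrix $\begin{pmatrix}1&1\\-1&1\end{pmatrix}$, so it lies in $F_2(\polyS)$.

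In the second case, there are invertible $(n-2)\times(n-2)$ minors with distinct absolute values $d\neq d'$. Desnanot--Jacobi applied to both gives
\[
\det\polyM=\frac{s_1s_2-s_3s_4}{d}=\frac{s_1's_2'-s_3's_4'}{d'}.
\]
Hence $d'(s_1s_2-s_3s_4)=d(s_1's_2'-s_3's_4')$, where both sides have degree at most $2$ in $\polyring$. Because $d$ and $d'$ are coprime, $d\mid s_1s_2-s_3s_4$ and $d'\mid s_1's_2'-s_3's_4'$. I will enumerate this finite list: for each $d$, take the values $(s_1s_2-s_3s_4)/d\in\polyring\setminus\polyS$. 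The intersection over at least two distinct $|d|$ should land inside $F_2(\polyS)$, that is, it should consist only of values obtainable with $d=1$. Some candidates will lie in $DJ(\polyS)\setminus\polyS$ but outside $F_2(\polyS)$. An example is a quotient such as $((2\indet+1)^2-1)/\indet=4\indet+4$, or another linear non-$\polyS$ polynomial that arises only with $d\neq1$. For these I must show that no consistent pair $d\neq d'$ produces them; the consistency equation above will be the tool for this.

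The main obstacle is this final case analysis: ruling out determinants that occur only for a single value of $|d|\neq1$. If the direct intersection argument is not enough, I will refine it by taking an $(n-1)\times(n-1)$ submatrix containing both minors. By induction on $n$, its own structure is constrained, since it is totally $\polyS$-modular. I will also use degree arguments: a linear $\det\polyM$ forces strong divisibility conditions on $s_1s_2-s_3s_4$. This should cut the candidates down to those already in $F_2(\polyS)$.
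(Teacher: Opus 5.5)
Your proposal is correct and is essentially the paper's proof. It uses Desnanot--Jacobi together with Lemma~\ref{lemma_equal_submatrices_tu}, and your case split is just the contrapositive of the paper's step that a determinant admitting only one value of $|\det\bA_{(n-2)}|$ contradicts that lemma; the finite enumeration of candidate values is the same, and the direct intersection check already suffices, so no fallback is needed. If $|d|=1$ or $|d'|=1$ the value lies in $F_2(\polyS)$ by definition, $|d|=2\indet+1$ produces nothing outside $F_2(\polyS)$, and the only candidates outside $F_2(\polyS)$ are $\pm\{2\indet+3,4\indet+3,4\indet+4\}$ (arising only for $|d|=\indet$) and $\pm\{2\indet-1,4\indet,4\indet+1\}$ (arising only for $|d|=\indet+1$), which is exactly the paper's excluded list, each admitting a single $|d|$.
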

	\begin{proof}
		Let $n\geq 3$ and $\polyM$ be a forbidden submatrix. We show that $\det\polyM \in F_2(\polyS)$. There exists an invertible submatrix $\bA_{(n-2)}\subseteq\polyM$. Applying the Desnanot-Jacobi identity, Lemma~\ref{lemma_desnanot_jacobi}, we get
		\begin{align}\label{eq_desnanot_jacobii}
			\det\polyM\cdot\det\bA_{(n-2)} = s_1s_2 - s_3s_4
		\end{align}
		with $s_i\in \polyS$ for $i=1,2,3,4$. This equation needs to have a solution for $\det\polyM$ in $\polyring$. Suppose that $\det\polyM$ has degree larger than one. This implies $\left|\det\bA_{(n-2)}\right| = 1$, an equality that has to hold for every invertible $(n-2)\times (n-2)$ subdeterminant. By Lemma~\ref{lemma_equal_submatrices_tu}, the invertible $(n-1)\times(n-1)$ submatrices of $\polyM$ are totally unimodular. Therefore, by \eqref{eq_desnanot_jacobii}, $\det\polyM$ is an integer which is a contradiction. So we assume $\det\bM$ has degree at most one. Using suitable software such as SageMath, one can enumerate all feasible solutions to the Desnanot-Jacobi identity with degree at most one. This gives us the following feasible values for $\det\polyM$ up to a sign:
		\begin{align*}
			2\polyS\backslash 0 \cup \lbrace \indet-1,\indet+2,2\indet-1,2\indet+3,3\indet+1,3\indet+2,4\indet,4\indet+1,4\indet+3,4\indet+4\rbrace.
		\end{align*}
		However, if $\det\polyM\in\pm\lbrace 2\indet-1,2\indet+3,4\indet,4\indet+1,4\indet+3,4\indet+4\rbrace$, then $\left|\det\bA_{(n-2)}\right|$ is uniquely determined in \eqref{eq_desnanot_jacobii} provided it is non-zero. Therefore, if there exists a forbidden submatrix with such a determinant, every invertible $(n-2)\times (n-2)$ submatrix needs to have the same determinant in absolute value. This contradicts Lemma~\ref{lemma_equal_submatrices_tu}. Hence, we can exclude these values. The remaining determinants are all contained in $F_2(\polyS)$. 
	\end{proof}
	The set $F_2(\polyS)$ can simply be computed by brute force. 
	Computing the intersections of the elements in $\polyS=\pm\lbrace 0,1,\indet,\indet+1,2\indet+1\rbrace$ and $F_2(\polyS)$ yields that $I(\polyS)=\lbrace -3,\ldots,2\rbrace$. 
	
	\begin{remark}
		\label{rem_intersections_1_x_x+1_2x+1}
		One can compute analogously to the proof of Lemma~\ref{lemma_determinants_submatrices_1_a_a+1_2a+1} the following sets $I(\polyS)$, which confirm the exceptions in Corollary~\ref{cor_recognition_int}.
		\begin{enumerate}
			\item For the set $\polyS = \pm \lbrace 0,1,\indet,\indet + 1\rbrace$, one has $F(S) = F_2(S)$ and $I(S) = \lbrace -3,\ldots,2\rbrace$.
			\item For the set $\polyS = \pm \lbrace 0,1,\indet\rbrace$, one has $F(S) = F_2(S)$ and $I(S) = \lbrace -2,\ldots,2\rbrace$.
		\end{enumerate}
	\end{remark}

	\section{The Recognition Problem}
	\label{sec_recognition}
	In this section we prove Theorem~\ref{thm_recognition_polyring}. We show how to recognize matrices with determinants in $\pm\lbrace 0,1,\indet,\indet + 1,2\indet + 1\rbrace$ and then, using this result, how to recognize subclasses of those matrices. We proceed as follows. Let $\polyS\subseteq \pm\lbrace 0,1,\indet,\indet + 1,2\indet + 1\rbrace$ be the given set of allowed polynomials. By Lemma~\ref{lemma_rank_coefficient_matrix}, every totally $\polyS$-modular matrix $\polyM$ admits a decomposition of the form $\polyM = \bM(0) + \indet\cdot\bm{u}\cdot\bm{v}^\top$ for some $\bu$ and $\bv$. One can test in polynomial time whether such a decomposition exists. For the sets $\polyS$ to be considered below, we use a specific characterization of when $\polyM$ is totally $\polyS$-modular. In all these cases, the characterization of total $\polyS$-modularity involves testing whether certain matrices are totally unimodular.	The polynomial running time follows then from the fact that one can recognize totally unimodular matrices in polynomial time; see \cite[Chapter 20]{schrijvertheorylinint86}.
	
	\subsection{The case $\pm \lbrace 0,1,\indet,\indet + 1,2\indet + 1\rbrace$} 
	For $S=\pm\lbrace 0,1,\indet,\indet+1,2\indet+1\rbrace$, 
%	By Lemma~\ref{lemma_rank_coefficient_matrix}, every totally $\polyS$-modular matrix $\polyM$ admits a decomposition into $\polyM = \bM(0) + \indet\cdot\bm{u}\cdot\bm{v}^\top$ for some $\bu$ and $\bv$.
%	One can test in polynomial time whether such a decomposition exists. 
	we use the complete characterization given below. 
	\begin{lemma}
		\label{lemma_t_bar_t_tu}
		Let $S=\pm\lbrace 0,1,\indet,\indet+1,2\indet+1\rbrace$. A matrix $\polyM = \bM(0) + \indet \cdot\bm{u}\cdot\bm{v}^\top$ is totally $\polyS$-modular if and only if $\bM(0)$ and $\bM(-1)$ are totally unimodular.
	\end{lemma}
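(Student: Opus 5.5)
The plan is to reduce everything to the modified matrix determinant lemma (Lemma~\ref{lemma_matrix_determinant_lemma}), applied to each square submatrix separately. Every square submatrix $\polyM'$ of $\polyM=\bM(0)+\indet\cdot\bu\cdot\bv^\top$ has the same form $\bM(0)'+\indet\cdot\bu'\cdot\bv'^\top$, where $\bu',\bv'$ are the restrictions of $\bu,\bv$ to the chosen rows and columns. By Lemma~\ref{lemma_matrix_determinant_lemma},
\[
\det\polyM' = (\alpha-\beta)\cdot\indet+\alpha,\qquad \alpha=\det\bM(0)',\ \beta=\det\bM(-1)'.
\]
So $\det\polyM'$ is the unique polynomial of degree at most one taking the value $\alpha$ at $0$ and $\beta$ at $-1$. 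The whole statement should therefore reduce to a statement about pairs of integers $(\alpha,\beta)$.

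The key observation is the one already made in Section~\ref{sec_smallest_nontrivial_cases}. The nine linear forms of $\polyS$ are exactly the affine functions $\ell$ with $\ell(0),\ell(-1)\in\{-1,0,1\}$. Since $\ell(x)=(\ell(0)-\ell(-1))x+\ell(0)$, the map $\ell\mapsto(\ell(0),\ell(-1))$ is a bijection from $\polyS$ onto $\{-1,0,1\}^2$. One can check this directly:
\begin{itemize}
\item $0\leftrightarrow(0,0)$;
\item $\pm1\leftrightarrow\pm(1,1)$;
\item $\pm\indet\leftrightarrow\pm(0,-1)$;
\item $\pm(\indet+1)\leftrightarrow\pm(1,0)$;
\item $\pm(2\indet+1)\leftrightarrow\pm(1,-1)$.
\end{itemize}

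With this in hand, both directions follow.

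For ($\Rightarrow$), suppose $\polyM$ is totally $\polyS$-modular. Then $\bM(0)$ and $\bM(-1)$ are totally $\polyS(0)$- and totally $\polyS(-1)$-modular by Lemma~\ref{lemma_totally_S_r_modular}. Since $\polyS(0)=\polyS(-1)=\pm\{0,1\}$, both are totally unimodular.

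For ($\Leftarrow$), suppose $\bM(0)$ and $\bM(-1)$ are totally unimodular. For each square submatrix $\polyM'$, the values $\alpha$ and $\beta$ are corresponding subdeterminants of $\bM(0)$ and $\bM(-1)$, so they lie in $\{-1,0,1\}$. By the displayed formula, $\det\polyM'$ is the affine polynomial with values $(\alpha,\beta)$ at $0$ and $-1$. By the bijection above, $\det\polyM'\in\polyS$.

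There is no real obstacle here. The only points needing care are:
\begin{itemize}
\item the matrix determinant lemma is stated over $\R[\indet]$, but it applies to submatrices with integer data, so the resulting polynomial is integral;
\item evaluating a submatrix of $\polyM$ at $0$ and $-1$ commutes with taking that submatrix.
\end{itemize}
The substantive content is the nine-line enumeration.
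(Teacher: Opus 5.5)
Your proposal is correct and follows essentially the paper's proof. Both apply the modified matrix determinant lemma to each square submatrix, and both obtain the converse by evaluating at $0$ and $-1$. Your explicit bijection between $S$ and $\{-1,0,1\}^2$ via $\ell\mapsto(\ell(0),\ell(-1))$ is simply a clean way of carrying out the paper's ``going through all feasible cases.''
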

	\begin{proof}
		It suffices to assume $\bM$ is square. Suppose $\bM(0)$ and $\bM(-1)$ are totally unimodular. We have
		\begin{align}\label{eq_proof_recognition_matrix_determinant_lemma}
			\det\polyM = \left(\det\bM(0) - \det\bM(-1)\right)\cdot \indet + \det\bM(0)
		\end{align}
		by Lemma \ref{lemma_matrix_determinant_lemma}. So $\det\polyM$ is completely determined by $\det\bM(0)$ and $\det\bM(-1)$. 
		As $\det\bM(0)$ and $\det\bM(-1)$ are both in $\pm\lbrace 0, 1\rbrace$, we obtain that $\det\polyM\in\polyS$ by going through all feasible cases. The other direction follows directly by evaluating $\polyS$ and $\polyM$ at $\indet\in\lbrace -1,0\rbrace$. 
	\end{proof}
	
	\subsection{The case $\pm \lbrace 0,1,\indet,\indet + 1\rbrace$}
	Let $\polyS = \pm \lbrace 0,1,\indet,\indet + 1\rbrace$. 
%	As before, by Lemma~\ref{lemma_rank_coefficient_matrix}, every totally $\polyS$-modular matrix $\polyM$ admits a decomposition into $\polyM = \bM(0) + \indet \cdot \bu\cdot\bv^\top$ for some vectors $\bu,\bv$ and one can test whether such a decomposition exists in polynomial time. 
	We aim to give a characterization similar as Lemma~\ref{lemma_t_bar_t_tu}. For that purpose, we introduce the concept of slope matrices. Let $\polyM = \bM(0) + \indet\cdot\bu\cdot\bv^\top$. Then the slope matrix of $\bM$ is defined by 
	\begin{align*}
		\slopematrix(\bM) := \begin{pmatrix}
			0 & \bv^\top \\ 
			\bu & \bM(0)
		\end{pmatrix}.
	\end{align*} 
	Given that $\bM$ is square, we denote by $\slope(\bM)$ the determinant of  $\slopematrix(\bM)$. It can be shown using the matrix determinant lemma and applying Laplace expansion along the first row in $\slopematrix(\bM)$ and then on the first column of the resulting matrices that
	\begin{align}
		\label{eq_matrix_determinant_lemma}
		\det \polyM = \det\bM(0) - \indet\cdot\slope(\bM).
	\end{align}
	We prove in Lemma~\ref{lemma_recognition_characterization_1_x_x+1} an analogue of Lemma~\ref{lemma_t_bar_t_tu} that characterizes totally $\polyS$-modular matrices by checking if $\polyM(0)$, $\polyM(-1)$, and $\slopematrix(\polyM)$ are totally unimodular, which again can be done in polynomial time. To deal with slope matrices we need the following intermediate result which uses Lemma~\ref{lemma_equal_submatrices_tu}.
	
	\begin{lemma}
		\label{lemma_append_u_v_1_x_x+1}
		Let $\polyS = \pm \lbrace 0,1,\indet,\indet + 1\rbrace$, and let $\polyM = \bM(0) + \indet \cdot\bu\cdot\bv^\top$ be an $m\times n$ totally $\polyS$-modular matrix where $\bu\neq\bm{0}$ and $\bv\neq\bm{0}$. Then $(\bM(0) \; \bu)$ is totally unimodular.
	\end{lemma}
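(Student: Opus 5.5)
The plan is to reduce the statement to a total $\polyS$-modularity statement about an augmented polynomial matrix, and then rule out a forbidden submatrix by a Desnanot--Jacobi computation. First I record two easy facts. Evaluating at $x=0$ shows that $\bM(0)$ is totally unimodular. Every entry $\bM(0)_{ij}+x u_iv_j$ lies in $\polyS$, and every element of $\polyS$ has $x$-coefficient in $\{0,\pm1\}$, so $u_iv_j\in\{0,\pm1\}$. Normalising $\gcd(\bu)=1$ (and absorbing the factor into $\bv$), this gives $\bu\in\{0,\pm1\}^m$ and $\bv\in\{0,\pm1\}^n$. In particular $(\bM(0)\;\bu)$ has entries in $\{0,\pm1\}$.

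Next I consider the augmented matrix $\bM^+:=(\polyM\;\; x\bu)$. Take a square submatrix of $\bM^+$ with rows $R$ that uses the last column together with columns $C$ of $\polyM$. Its columns in $C$ are $\bM(0)_c+xv_c\bu_R$, so subtracting $v_c$ times the last column from each of them gives
\[
\det(\polyM_{R,C}\;\; x\bu_R)=x\cdot\det(\bM(0)_{R,C}\;\;\bu_R).
\]
Hence $(\bM(0)\;\bu)$ is totally unimodular exactly when every such determinant lies in $\pm\{0,x\}$. So I argue by contradiction. Suppose $(\bM(0)\;\bu)$ is not totally unimodular, and choose a minimal non-totally-unimodular square submatrix $\bD=(\bM(0)_{R,C}\;\bu_R)$. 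Every proper submatrix of $\bD$ is totally unimodular and its entries are in $\{0,\pm1\}$, so by Gomory's fact (equivalently, the case $\polyS=\pm\{0,1\}$ of Lemma~\ref{lemma_finite_number_of_determinants}) we get $\det\bD=\pm2$. The corresponding submatrix $\bD^+=(\polyM_{R,C}\;x\bu_R)$ of $\bM^+$ therefore has determinant $\pm2x$. By the choice of $\bD$, its proper submatrices have determinants in $\polyS\cup\pm\{0,x\}=\polyS$, using the displayed identity for the submatrices containing the last column. So $\bD^+$ is a forbidden submatrix for $\polyS$.

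It remains to show that no forbidden submatrix for $\polyS$ arising this way can have determinant $\pm2x$. If $\bD$ has size $1$ or $2$, this is a direct check, using that the entries of $\bu$ and of the $x$-coefficients are in $\{0,\pm1\}$. For size $n\ge3$, I apply Lemma~\ref{lemma_equal_submatrices_tu} to $\bD^+$. Its hypotheses hold: the non-zero elements of $\polyS$ are pairwise coprime or equal up to sign, and $2\notin\polyS$. So either all invertible $(n-1)\times(n-1)$ submatrices of $\bD^+$ are totally unimodular, or two invertible $(n-2)\times(n-2)$ submatrices have different absolute determinants. In the first case, the minors of $\bD^+$ through the last column equal $x$ times an integer and would have to be $\pm1$, which is impossible, so those minors vanish. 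Expanding $\det\bD^+$ along the last column then forces $\det\bD^+=0$, a contradiction. In the second case, I use the Desnanot--Jacobi identity (Lemma~\ref{lemma_desnanot_jacobi}), $2x\cdot\det\bA_{(n-2)}=s_1s_2-s_3s_4$ with $s_i\in\polyS$. Checking which $\det\bA_{(n-2)}\in\polyS$ admit such a representation should show that $|\det\bA_{(n-2)}|$ is determined uniquely, as in the proof of Lemma~\ref{lemma_determinants_submatrices_1_a_a+1_2a+1}. This contradicts the fact that two invertible $(n-2)\times(n-2)$ submatrices have different absolute determinants.

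The main obstacle is this last case analysis. The value $2x$ does occur in $F(\pm\{0,1,x,x+1\})$: for example the $2\times2$ matrix with rows $(x,x)$ and $(-x,x)$ has determinant $2x^2$, and $2x$ itself arises from $\pm1$ and $x$ entries. So a purely numerical Desnanot--Jacobi argument may not suffice. If it does not, I would use the extra structure of $\bD^+$, namely that its last column is $x\bu_R$ with $\bu_R\in\{0,\pm1\}$ and that every $x$-coefficient is a multiple of that same $\bu_R$. The specific expansion I would try applies Desnanot--Jacobi with $\bA_{(n-2)}$ chosen to avoid the last column. Then two of the four minors on the right-hand side are multiples of $x$ by integers in $\{0,\pm1\}$, and the identity reads $2x\det\bA=x(\epsilon_1 s_2-\epsilon_2 s_4)$ with $\epsilon_i\in\{0,\pm1\}$. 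This forces $\det\bA=\pm1$, and a similar expansion is then needed to reach the desired contradiction.
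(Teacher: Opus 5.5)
Your reduction is sound up to the point where you obtain the forbidden submatrix $\bD^+=(\polyM_{R,C}\;\,x\bu_R)$ with $\det\bD^+=\pm2x$. The step you leave for last, however, cannot be carried out. It is simply false that no forbidden submatrix of this shape has determinant $\pm2x$, so no argument internal to $\bD^+$ (Lemma~\ref{lemma_equal_submatrices_tu}, or Desnanot--Jacobi applied to $\bD^+$) can give a contradiction. Whenever $\bv_C=\zero$ we have $\bD^+=(\bM(0)_{R,C}\;\,x\bu_R)$, for instance
\[
\begin{pmatrix}1 & x\\ -1 & x\end{pmatrix},\qquad
\begin{pmatrix}1&1&0\\0&1&x\\1&0&x\end{pmatrix}.
\]
All proper minors of these lie in $\pm\{0,1,x\}$ and the determinant is $2x$. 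They also have exactly the structure you wanted to exploit: the last column is $x\bu_R$ with $\bu_R$ a $\{0,\pm1\}$-vector, and all $x$-coefficients are multiples of $\bu_R$ (here zero). So your size-$2$ ``direct check'' already fails. Likewise, your final expansion $2\det\bA=\epsilon_1s_2-\epsilon_2s_4$ only gives $|\det\bA|=|s_2|=|s_4|$ (e.g.\ $2x=x-(-x)$), not $\det\bA=\pm1$, and it gives no contradiction. The underlying problem is that after bounding the entries of $\bu$ you never use $\bv\neq\zero$ again, nor $2x+1\notin\polyS$, and both are necessary. For $\bv=\zero$, take $\bM(0)=(1,-1)^\top$ and $\bu=(1,1)^\top$. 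For the set $\pm\{0,1,x,x+1,2x+1\}$, take $\polyM=\begin{pmatrix}1&x\\-1&x+1\end{pmatrix}$, i.e.\ $\bu=(1,1)^\top$, $\bv=(0,1)^\top$. In both cases $(\bM(0)\;\bu)$ contains $\begin{pmatrix}1&1\\-1&1\end{pmatrix}$, which has determinant $2$.

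The missing idea is to leave $\bD^+$ and use a column $j$ of $\polyM$ with $v_j\neq0$. The paper first uses precisely your Desnanot--Jacobi expansion, with $\bA$ avoiding the $\bu$-column. This shows that all invertible $(k-2)\times(k-2)$ submatrices of $\polyM_{R,C}$ (with $k=|R|$) have a common absolute determinant $s$. Replacing $\bu_R$ by $s\bu_R$ and applying Lemma~\ref{lemma_equal_submatrices_tu} then forces $s=1$ and forces $\polyM_{R,C}$ to have constant entries. Hence $\bu_R\bv_C^\top=\zero$, and since $\bu_R\neq\zero$, $\bv_C=\zero$. So the configuration in the examples above is not merely possible in a minimal counterexample; it is forced. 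Now choose $j$ with $v_j\neq0$. Then $j\notin C$, and multilinearity gives
\[
\det\polyM_{R,C\cup j}=\det\bM(0)_{R,C\cup j}\pm x\,v_j\det(\bM(0)_{R,C}\;\,\bu_R)=t\pm2x,\qquad t\in\{0,\pm1\}.
\]
This is not in $\polyS$, since no element of $\polyS$ has leading coefficient $\pm2$. The contradiction therefore lives in a $k\times k$ submatrix of $\polyM$ outside $\bD^+$, and this is exactly where $\bv\neq\zero$ and $2x+1\notin\polyS$ are used.
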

	
	\begin{proof}
		It suffices to consider the case $m,n\geq 2$. The matrix $\bM(0)$ is totally unimodular by definition and $\bu$ and $\bv$ have entries in $\pm\lbrace 0,1\rbrace$. 
		Suppose $(\bM(0) \; \bu)$ is not totally unimodular. So the matrix contains a forbidden submatrix for $\pm\lbrace 0,1\rbrace$. Since $\bM(0)$ is totally unimodular, a forbidden submatrix has to meet the column $\bu$. Let $(\bM(0)_{I,J} \; \bu_I)$ be such a forbidden submatrix indexed by $I\subseteq \lbrack m\rbrack$ and $J\subseteq \lbrack n\rbrack$ with $|I|=k$ and $|J|=k-1$. We can add or subtract $\indet\cdot\bu_I$ to columns of $\bM(0)_{I,J}$ such that we obtain the matrix $(\bM(0)_{I,J}+ \indet\cdot\bu_I\cdot\bv_J^\top \quad \bu_I) = (\polyM_{I,J} \; \bu_I)$. This does not alter subdeterminants that meet the column $\bu_I$. 
		Since $\polyM_{I,J}$ is a submatrix of $\polyM$, each proper submatrix of $(\polyM_{I,J}\; \bu_I)$ is totally $\polyS$-modular. 
		
		We claim that all invertible $(k-2)\times (k-2)$ submatrices of $\polyM_{I,J}$ have the same determinant in absolute value. To this end, we show that if $\bA_{(k-2)}$ is an invertible $(k-2)\times(k-2)$ submatrix of an invertible $(k-1)\times(k-1)$ submatrix $\bA_{(k-1)}$ of $\bM_{I,J}$, then $|\det\bA_{(k-2)}	|=|\det\bA_{(k-1)}|$. The claim follows since every invertible $(k-2)\times(k-2)$ submatrix of $\polyM_{I,J}$ is contained in an invertible $(k-1)\times(k-1)$ submatrix of $\polyM_{I,J}$, and any two invertible $(k-1)\times(k-1)$ submatrices of $\polyM_{I,J}$ share at least one invertible $(k-2)\times(k-2)$ submatrix.
			
		Let $\bA_{(k-2)}\subseteq \bA_{(k-1)}\subseteq \polyM_{I,J}$ be invertible $(k-2)\times (k-2)$ and $(k-1)\times (k-1)$ submatrices, respectively. Then, by Lemma~\ref{lemma_desnanot_jacobi}, we recover that 
		\begin{align*}
			2\cdot\det\bA_{(k-2)} = t_1\cdot\det\bA_{(k-1)} - t_2\cdot s
		\end{align*}
		for $s\in \polyS$ and $t_1,t_2\in \pm \lbrace 0,1\rbrace$ as $t_1,t_2$ correspond to the $(k-1)\times (k-1)$ submatrices that meet $\bu_I$, which admit determinants contained in $\pm \lbrace 0,1\rbrace$. If one of the $t_i$ equals zero, the Desnanot-Jacobi identity above has no solution. So we must have $t_1,t_2\in\{-1,1\}$. The only possibility to satisfy this equation is given by $\left|s\right| = \left|\det\bA_{(k-2)}\right| = \left|\det\bA_{(k-1)}\right|$.
		
		Denote by $s$ the common absolute value of the non-zero $(k-2)\times(k-2)$ subdeterminants of $\polyM_{I,J}$. We scale the last column and analyze $(\polyM_{I,J} \;\, s\cdot \bu_I)$. The invertible $(k-2) \times (k-2)$ submatrices that meet $s\cdot\bu_I$ admit a determinant $\pm s$. So each $(k-2)\times (k-2)$ submatrix of $(\polyM_{I,J} \;\, s\cdot \bu_I)$ has a determinant contained in $\pm\lbrace 0,s\rbrace$. 
		By Lemma~\ref{lemma_equal_submatrices_tu}, it follows that the invertible $(k-1)\times (k-1)$ submatrices of $(\polyM_{I,J} \;\, s\cdot \bu_I)$ are totally unimodular. This implies $s = 1$ and $\polyM_{I,J}$ has entries in $\pm\lbrace 0,1\rbrace$. Since $\polyM_{I,J}=\polyM_{I,J}(0)+x\cdot\bu_I\cdot\bv_J^\top$, we have $\bu_I\cdot\bv_J^\top=\bm{0}$. 
		To satisfy this equation, we need to have $\bv_J = \bm{0}$ as $\bu_I = \bm{0}$ would give us $\det(\bM(0)_{I,J}\;\bu_I) = 0$. Let $j\in\lbrack n\rbrack$ be such that the column $\polyM_{I,j}$  contains some entries in $\pm \lbrace \indet,\indet + 1\rbrace$. Such a column exists because $\bv\neq\bm{0}$. By the multilinearity of the determinant, we obtain
		\begin{align*}
			\left|\det\polyM_{I, J\cup j}\right| 
			&= \left|\det(\bM(0)_{I,J} \; \polyM_{I,j})\right| \\
			&= \left|\det\bM(0)_{I, J\cup j} \pm \indet\cdot\det (\bM(0)_{I,J} \; \bu_I)\right| \\
			&= \left|t + 2\cdot\indet\right|
		\end{align*}
		for $t\in\pm \lbrace 0, 1\rbrace$, where we use that $(\bM(0)_{I,J} \; \bu_I)$ is a forbidden submatrix for $\pm\lbrace 0,1\rbrace$ and has therefore determinant $\pm 2$. However, this is a contradiction of the fact $\det\polyM_{I, J\cup j}\in \polyS$, since $\polyS$ has no linear form with leading coefficient $\pm 2$. We conclude that $(\bM(0)\;\bu)$ is totally unimodular.
	\end{proof}
	
	\begin{lemma}
		\label{lemma_recognition_characterization_1_x_x+1}
		Let $\polyS = \pm\lbrace 0,1,\indet,\indet + 1\rbrace$. A matrix $\polyM = \bM(0) + \indet \cdot\bu\cdot\bv^\top$ is totally $\polyS$-modular if and only if $\bM(0)$, $\bM(-1)$, and $\slopematrix(\bM)$ are totally unimodular.
	\end{lemma}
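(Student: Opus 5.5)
Both directions rest on the identity \eqref{eq_matrix_determinant_lemma}, $\det\polyM=\det\bM(0)-\indet\cdot\slope(\bM)$, applied to square submatrices. Passing to a submatrix $\polyM_{I,J}$ gives the decomposition $\bM(0)_{I,J}+\indet\,\bu_I\bv_J^\top$, and its slope matrix is the submatrix of $\slopematrix(\bM)$ on rows $\{0\}\cup I$ and columns $\{0\}\cup J$. Moreover, evaluating \eqref{eq_matrix_determinant_lemma} at $\indet=-1$ gives $\slope(\bM_{I,J})=\det\bM(-1)_{I,J}-\det\bM(0)_{I,J}$.

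For the ``if'' direction, assume $\bM(0)$, $\bM(-1)$ and $\slopematrix(\bM)$ are totally unimodular, and fix a square submatrix. Its slope lies in $\pm\{0,1\}$, since it is a subdeterminant of $\slopematrix(\bM)$. Its constant term $\det\bM(0)_{I,J}$ also lies in $\pm\{0,1\}$. Hence $\det\polyM_{I,J}=c-\indet\sigma$ with $c,\sigma\in\pm\{0,1\}$. Moreover, $c-\sigma=\det\bM(-1)_{I,J}$ lies in $\pm\{0,1\}$, which rules out $c=-\sigma\neq 0$, i.e.\ the forms $\pm(1+\indet)$ with the wrong sign pairing. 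Enumerating the remaining cases gives $0$, $\pm1$, $\pm\indet$ and $\pm(\indet+1)$, with the sign pattern matching. (Concretely, $c=1,\sigma=1$ gives $1-\indet$, for which $\det\bM(-1)=2$, so it is excluded; $c=1,\sigma=-1$ gives $1+\indet$, which is allowed.) So $\polyM$ is totally $\polyS$-modular.

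For the ``only if'' direction, $\bM(0)$ and $\bM(-1)$ are totally unimodular by evaluating at $0$ and $-1$ (Lemma~\ref{lemma_totally_S_r_modular}), since $\polyS(0)=\polyS(-1)=\pm\{0,1\}$. The real work is $\slopematrix(\bM)$. If $\bu=\zero$ or $\bv=\zero$, the slope matrix is $\bM(0)$ bordered by a zero row or column together with $\bv$ or $\bu$; I would treat this degenerate case directly, replacing $\bu$ or $\bv$ by zero, which is allowed because the decomposition is not unique. So assume both are non-zero. A square submatrix of $\slopematrix(\bM)$ falls into one of four types:
\begin{enumerate}
\item It avoids the first row and column, so it is a submatrix of $\bM(0)$ and hence totally unimodular.
\item It uses the first column but not the first row, so it is a submatrix of $(\bu\;\bM(0))$; this is covered by Lemma~\ref{lemma_append_u_v_1_x_x+1}.
\item It uses the first row but not the first column, so it is a submatrix of $\binom{\bv^\top}{\bM(0)}$; this is covered by Lemma~\ref{lemma_append_u_v_1_x_x+1} applied to $\polyM^\top=\bM(0)^\top+\indet\,\bv\bu^\top$, which is totally $\polyS$-modular.
\item It uses both the first row and the first column, so its determinant is $\slope(\bM_{I,J})$ for some square submatrix. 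By \eqref{eq_matrix_determinant_lemma} this is minus the $\indet$-coefficient of $\det\polyM_{I,J}\in\polyS$, and every element of $\polyS$ has linear coefficient in $\pm\{0,1\}$.
\end{enumerate}
In the fourth type the case of an empty $I,J$ is just the entry $0$, which is fine. Thus every subdeterminant of $\slopematrix(\bM)$ lies in $\pm\{0,1\}$.

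The main obstacle is the second and third types, the bordered matrices, but Lemma~\ref{lemma_append_u_v_1_x_x+1} handles exactly this, so the only subtle points are two bookkeeping checks. First, the transposed matrix must satisfy that lemma's hypotheses, which holds because total $\polyS$-modularity is transpose-invariant. Second, the degenerate case $\bu=\zero$ or $\bv=\zero$ must be handled by choosing the decomposition appropriately.
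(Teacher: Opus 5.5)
Your proof is correct and follows the paper's route. The ``only if'' direction uses the same four-way split of submatrices of $\slopematrix(\bM)$, with Lemma~\ref{lemma_append_u_v_1_x_x+1} for the bordered ones. Your direct enumeration in the ``if'' direction replaces the paper's appeal to Lemma~\ref{lemma_t_bar_t_tu} followed by ruling out $\pm(2\indet+1)$ via the slope. Your explicit handling of $\bu=\zero$ or $\bv=\zero$ fills in a point the paper leaves implicit, since there the slope matrix genuinely depends on the chosen decomposition.

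One slip to fix: $\det\bM(-1)_{I,J}=c+\sigma$, not $c-\sigma$. So the excluded case is $c=\sigma\neq 0$, i.e.\ $\pm(1-\indet)$, as your own parenthetical and your earlier formula for the slope correctly show.
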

	\begin{proof}
		It suffices to show the statement for square matrices. Assume $\bM(0)$, $\bM(-1)$, and $\slopematrix(\bM)$ are totally unimodular. 
		Following Lemma~\ref{lemma_t_bar_t_tu}, we deduce that $\polyM$ is totally $\pm\lbrace 0,1,\indet,\indet + 1, 2\indet + 1\rbrace$-modular since $\bM(0)$ and $\bM(-1)$ are totally unimodular. We argue that no submatrix admits a determinant contained in $\pm\lbrace 2\indet + 1\rbrace$. Let $\polyM'\subseteq \polyM$ be a square submatrix. Then observe that $\slopematrix(\bM')$ is a submatrix of $\slopematrix(\bM)$. So we deduce that $\slope(\bM') \in \pm\lbrace 0,1\rbrace$ as $\slopematrix(\bM)$ is totally unimodular. By \eqref{eq_matrix_determinant_lemma}, we obtain that no square submatrix of $\bM$ has a determinant contained in $\pm\lbrace 2\indet + 1\rbrace$. So the matrix $\polyM$ is totally $\polyS$-modular. 
		
		Now suppose $\polyM$ is totally $\polyS$-modular. By evaluating at $\intvar\in\lbrace -1,0\rbrace$, the matrices $\bM(0)$ and $\bM(-1)$ are totally unimodular. Every square submatrix of $\slopematrix(\bM)$ that meets the first row and column has a determinant in $\pm \lbrace 0,1\rbrace$. This is because such matrices are slope matrices of totally $\polyS$-modular matrices, which have a slope of $\pm \lbrace 0,1\rbrace$ by (\ref{eq_matrix_determinant_lemma}). Also, the square submatrices of $\slopematrix(\bM)$ that do not meet both the first row and column are also totally unimodular as these are submatrices of $\bM(0)$. It remains to argue that submatrices of $\slopematrix(\bM)$ that meet the first row but not the first column, or the first column but not the first row, are totally unimodular as well. But this follows from Lemma~\ref{lemma_append_u_v_1_x_x+1} applied to $(\bM(0) \; \bu)$ and $(\bM(0)^\top \; \bv)$ .
	\end{proof}
	
	\subsection{The case $\pm\lbrace 0,1,\indet\rbrace$}
	For $\polyS = \pm \lbrace 0,1,\indet\rbrace$, 
%	As before, we begin by testing in polynomial time whether the decomposition $\polyM = \bM(0) + \indet\cdot\bu\cdot\bv^\top$ for some $\bu$ and $\bv$ applies. 
	one can show the following characterization similar to Lemmas~\ref{lemma_t_bar_t_tu} and \ref{lemma_recognition_characterization_1_x_x+1}.
	\begin{lemma}
		\label{lemma_recognition_characterization_0_1_x}
		Let $\polyS = \pm\lbrace 0,1,\indet\rbrace$. A matrix $\polyM = \bM(0) + \indet \cdot\bu\cdot\bv^\top$ is totally $\polyS$-modular if and only if $\bM(0)$, $\bM(-1)$, and $\bM(1)$ are totally unimodular.
	\end{lemma}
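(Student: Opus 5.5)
It suffices to treat square matrices, since both conditions pass to submatrices: a square submatrix of $\bM$ again has the form $\bM'(0)+\indet\cdot\bu'\bv'^\top$, and its evaluations are the corresponding submatrices of $\bM(0)$, $\bM(-1)$ and $\bM(1)$. The key tool is the modified matrix determinant lemma, Lemma~\ref{lemma_matrix_determinant_lemma}. It says that $\det\bM$ is a polynomial of degree at most one in $\indet$, and that it is completely determined by the pair $(\det\bM(0),\det\bM(-1))$:
\[
\det\polyM=(\det\bM(0)-\det\bM(-1))\cdot\indet+\det\bM(0).
\]
Evaluating at $\indet=1$ gives the consistency relation
\[
\det\bM(1)=2\det\bM(0)-\det\bM(-1).
\]

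\emph{Necessity.} Suppose $\polyM$ is totally $\polyS$-modular. By Lemma~\ref{lemma_totally_S_r_modular}, $\bM(\intvar)$ is totally $\polyS(\intvar)$-modular for every integer $\intvar$. We have $\polyS(0)=\pm\{0,1\}$, $\polyS(-1)=\pm\{0,1\}$ and $\polyS(1)=\pm\{0,1\}$, since $\indet$ evaluates to $0$, $-1$ and $1$ respectively. So all three evaluations are totally unimodular.

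\emph{Sufficiency.} Suppose $\bM(0)$, $\bM(-1)$ and $\bM(1)$ are totally unimodular, and let $\bM'$ be any square submatrix. Set $d_0=\det\bM'(0)$ and $d_{-1}=\det\bM'(-1)$, both in $\pm\{0,1\}$. Then $\det\bM'=(d_0-d_{-1})\indet+d_0$. Running through the nine cases gives the following.
\begin{itemize}
\item If $d_0=d_{-1}$, then $\det\bM'=d_0\in\pm\{0,1\}$.
\item If $d_0=0$ and $d_{-1}=\pm1$, then $\det\bM'=\mp\indet$.
\item If $d_0=\pm1$ and $d_{-1}=0$, then $\det\bM'=\pm(\indet+1)$.
\item If $d_0=-d_{-1}=\pm1$, then $\det\bM'=\pm(2\indet+1)$.
\end{itemize}
The cases $\pm(\indet+1)$ and $\pm(2\indet+1)$ must be excluded. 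Here the third evaluation is used: by the consistency relation, $\det\bM'(1)=2d_0-d_{-1}$. This equals $\pm2$ in the $\pm(\indet+1)$ case and $\pm3$ in the $\pm(2\indet+1)$ case. Both contradict the total unimodularity of $\bM(1)$, of which $\bM'(1)$ is a submatrix. Hence $\det\bM'\in\pm\{0,1,\indet\}$, and $\polyM$ is totally $\polyS$-modular.

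There is no real obstacle beyond the bookkeeping. The one point to check is that Lemma~\ref{lemma_matrix_determinant_lemma} applies verbatim to every square submatrix with the restricted vectors $\bu_I,\bv_J$, including when $\bM'(0)$ is singular. This is covered by the lemma as stated over $\R[\indet]$. In contrast to Lemma~\ref{lemma_recognition_characterization_1_x_x+1}, no slope matrix or auxiliary result like Lemma~\ref{lemma_append_u_v_1_x_x+1} is needed, because a single extra evaluation already separates $\indet$ from $\indet+1$ and $2\indet+1$.
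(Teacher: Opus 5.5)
Your proposal is correct and follows essentially the same route as the paper's proof. Like the paper, you reduce to square submatrices and apply the modified matrix determinant lemma together with the relation $\det\bM(1)=2\det\bM(0)-\det\bM(-1)$ to rule out $\pm(\indet+1)$ and $\pm(2\indet+1)$, and you get necessity by evaluating at $-1,0,1$.
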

	\begin{proof}
		It suffices to show the statement for square matrices. Let $\bM(0)$, $\bM(-1)$, and $\bM(1)$ be totally unimodular. By the matrix determinant lemma, we have 
		\begin{align*}
			\det\bM = (\det\bM(0) - \det\bM(-1))\cdot\indet + \det\bM(0).
		\end{align*}
		Evaluating at $\indet = 1$ gives us
		\begin{align*}
			\det\bM(1) = 2\det\bM(0) - \det\bM(-1).
		\end{align*}
		Since $\det\bM(-1),\det\bM(0),\det\bM(1)\in\pm\lbrace 0,1\rbrace$, we can enumerate all feasible cases that satisfy the equation above and obtain $\det\polyM\in\pm\lbrace 0,1,\indet\rbrace$.
		
		For the other direction, suppose that $\polyM = \bM(0) + \indet \cdot\bu\cdot\bv^\top$ is totally $\polyS$-modular. The claim follows by evaluating at $\intvar\in\lbrace -1,0,1\rbrace$.
	\end{proof}
	
	\section{The Optimization Problem}
	\label{sec_optimization}
		We now turn to the problem of integer linear optimization when the constraint matrix is totally $\polyS$-modular. For a matrix $\bM$ and a row vector $\bv^\top$, let $\bM_{\bv}$ denote the matrix obtained by adding the row vector $\bv^\top$ to the bottom of $\bM$.
	
	\begin{lemma}
		\label{lemma_bounded_subdeterminants}
		Let $S\subseteq \polyring$ be finite and $\max_{s\in S} \deg(s) = 1$. Let $\polyM = \bM(0) + \indet\cdot\bu\cdot\bv^\top$ be totally $\polyS$-modular. Suppose that $\bu,\bv\neq \bm{0}$. Then  
		\begin{align*}
			\Delta\left(\bM(0)_{\bv}\right)\leq 2\cdot\Delta(\bM(0))\cdot\Delta(\bM(-1)).
		\end{align*}
	\end{lemma}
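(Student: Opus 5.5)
The plan is to reduce everything to a 3-term Grassmann--Pl\"ucker computation, as in the proof of Lemma~\ref{lemma_bound_subdeterminants_matrix_projection}. Square submatrices of $\bM(0)_{\bv}$ that avoid the appended row are submatrices of $\bM(0)$. For those the bound is immediate, because $\Delta(\bM(-1))\geq 1$ whenever $\bu,\bv\neq\bm{0}$. So the whole content is to bound $|\det \bK|$ for an invertible matrix
\[
\bK=\begin{pmatrix}\bM(0)_{I,J}\\ \bv_J^\top\end{pmatrix},\qquad |I|=k-1,\ |J|=k .
\]

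The first observation is an invariance under evaluation. For every integer $\intvar$ we have $\bM(\intvar)_{I,J}=\bM(0)_{I,J}+\intvar\,\bu_I\bv_J^\top$. Subtracting $\intvar u_i$ times the last row of $\bK$ from each row $i\in I$ is a determinant-$1$ row operation, so
\[
\det\bK=\det\begin{pmatrix}\bM(\intvar)_{I,J}\\ \bv_J^\top\end{pmatrix}\qquad\text{for all }\intvar,
\]
and in particular this holds for $\intvar=0$ and $\intvar=-1$. Hence the appended row $\bv^\top$ plays, simultaneously for $\bM(0)$ and for $\bM(-1)$, the role of one extra vector. Transposing, this is the setting of Lemma~\ref{lemma_bound_subdeterminants_matrix_projection}: the columns $\bM(0)^\top$ and $\bM(-1)^\top$ differ only by multiples of $\bv$.

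Next I would choose a row $i\notin I$ with $u_i\neq 0$; such a row exists unless $\bu_{[m]\setminus I}=\bm{0}$, which is treated separately below. I would apply Lemma~\ref{lemma_3_term_GP_relations} to the $(k+1)$ rows $I\cup\{i\}$ together with the row $\bv^\top$, working with transposes so that rows become vectors. For the fixed part $\bA$ I take $k-2$ of the rows of $\bM(0)_{I,J}$, chosen so that $\bK$ is one of the six products; the two vectors taken from $I$ not in $\bA$ give the remaining choices. The row $i$ of $\bM(0)$ and the row $i$ of $\bM(-1)$ differ by $u_i\bv_J^\top$. By the invariance above, every product in the relation that contains $\bv$ can be rewritten with all $\bM(0)$-rows replaced by $\bM(-1)$-rows. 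The products not containing $\bv$ are $k\times k$ minors of $\bM(0)$ or of $\bM(-1)$. Multilinearity in row $i$ then gives
\[
u_i\det\bK=\det\bM(0)_{I\cup i,J}-\det\bM(-1)_{I\cup i,J},
\]
the $\pm$ sign being absorbed into the orientation of $\bK$. As a fallback I would instead use the GP relation and isolate $\det\bK$ against a non-zero integer factor, exactly as in Lemma~\ref{lemma_bound_subdeterminants_matrix_projection}. In that case the right-hand side is a difference of two products, each of a $\bM(0)$-minor and a $\bM(-1)$-minor, which yields the factor $2\Delta(\bM(0))\Delta(\bM(-1))$.

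The main obstacle is the degenerate case, where every row with $u_i\neq0$ lies in $I$, or where the needed minors vanish. Here I would use Lemma~\ref{lemma_rank_coefficient_matrix} and the hypothesis $\max\deg=1$, which give that subdeterminants of $\bM$ are linear in $\indet$. Then I would pivot: swap a row of $I$ carrying $u_i\neq 0$ with a row outside $I$ so that an invertible $(k-1)$-subset remains. If $\bM(0)_{I\cup\{i\},J}$ has full row rank, this pivot exists. I expect checking this case carefully, and making sure the non-zero integer divisor in the GP identity really is a minor of $\bM(0)$ or of $\bM(-1)$, to be the delicate step.
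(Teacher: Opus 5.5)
\textbf{There is a genuine gap.} Your key identity
\[
u_i\det\bm{K}=\det\bM(0)_{I\cup i,J}-\det\bM(-1)_{I\cup i,J}
\]
is false unless $\bu_I=\bm{0}$. Since $\bM(-1)_{I\cup i,J}=\bM(0)_{I\cup i,J}-\bu_{I\cup i}\bv_J^\top$, multilinearity actually gives
\[
\det\bM(0)_{I\cup i,J}-\det\bM(-1)_{I\cup i,J}=\sum_{r\in I\cup i}u_r\det\bigl(\bM(0)_{I\cup i,J}\text{ with row } r \text{ replaced by } \bv_J^\top\bigr).
\]
This is one determinant of the same type as $\det\bm{K}$ for every row with $u_r\neq 0$, not just for row $i$. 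Your invariance lets you turn the $I$-rows of $\bm{K}$ into $\bM(-1)$-rows. But when you expand in row $i$, the same $I$-rows occur in both terms, so one of the two terms is a mixed determinant rather than a minor of $\bM(0)$ or of $\bM(-1)$. For a concrete failure, take $\bM(0)=\id$ of size $2$ and $\bu=\bv=(1,1)^\top$. Then $\bM$ has entries $x+1,x,x,x+1$ and $\det\bM=2x+1$, so $\bM$ is totally $\pm\{0,1,x,x+1,2x+1\}$-modular. With $I=\{1\}$, $J=\{1,2\}$, $i=2$ one has $\det\bm{K}=1$, but $\det\bM(0)-\det\bM(-1)=1-(-1)=2$.

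Your fallback names the right target shape, namely a difference of two products of an $\bM(0)$-minor and an $\bM(-1)$-minor, divided by a non-zero integer. What it lacks is a setup that produces this shape. If the vectors are rows of $\bM(0)_{\cdot,J}$ and $\bv_J$ in $\R^k$, every determinant in a GP relation is either an $\bM(0)$-minor or again contains $\bv_J$, so the relation is circular. You also cannot mix in $\bM(-1)$-rows: the block $\bA$ is shared by all six determinants, and a row is simultaneously a row of $\bM(0)$ and of $\bM(-1)$ only when $u_r=0$.

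The missing idea is to lift by one coordinate that records $\bu$. The paper takes $\bp=\be^1$ and $\bq=(1,\bv^\top)^\top$, and lets $\bT$ have first row $(\bu^\top\ \zero)$ and remaining rows $(\bM(0)^\top\ \id)$. A lifted column $(u_r,\bM(0)_{r,\cdot})^\top$ projects to the $\bM(0)$-row along $\bp$ and to the $\bM(-1)$-row along $\bq$. This is exactly where your invariance lives. Hence $\bT/\bp=(\bM(0)^\top\ \id)$ and $\bT/\bq=(\bM(-1)^\top\ \id)$. Up to sign, the subdeterminants of $\bM(0)_{\bv}$ are maximal minors of $(\bq\ \bT)/\bp$, so they are bounded by $\Delta_{n+1}(\bp\ \bq\ \bT)$, and Lemma~\ref{lemma_bound_subdeterminants_matrix_projection} finishes the proof.

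In that argument the divisor $\det(\br\ \bs\ \bT_I)$ only has to be a non-zero integer, which exists because $\bT$ has full row rank. It need not be a minor of $\bM(0)$ or $\bM(-1)$, so the step you flag as delicate is not an issue. No degenerate-case analysis, no row-swap pivot (which would change $\bm{K}$ anyway), and no use of Lemma~\ref{lemma_rank_coefficient_matrix} is needed.

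Finally, $\Delta(\bM(-1))\geq1$ does not follow from $\bu,\bv\neq\bm{0}$; consider $\bM=(x+1)$. It holds only by the empty-minor convention, which the identity block in $\bT$ encodes.
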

	\begin{proof}%S: We do not seem to use that M has full rank. i removed this assumption
		Define
		\[
		\left(\bp\ \bq\ \bT\right):=\left(\begin{array}{cccc}
			1 & 1 & \bu^{\top} & \zero \\
			\zero & \bv & \bM(0)^{\top} & \id
		\end{array}\right).
		\]	
		The matrix $\bT$ has full row rank since $\bu\neq \bm{0}$ and $\bp,\bq$ are non-parallel as $\bv\neq \bm{0}$. So we can apply Lemma~\ref{lemma_bound_subdeterminants_matrix_projection} and the result follows, noting that
		\begin{align*}
			\Delta(\bM(0))&=\Delta_n(\bT/\bp),\\
			\Delta(\bM(-1))&=\Delta_n(\bT/\bq),\\
			\Delta(\bM(0)_{\bv})&=\Delta_n((\bq\;\bT)/\bp)\leq\Delta_{n+1}(\bp\;\bq\;\bT).
		\end{align*}
	\end{proof}
	
	One can interpret this upper bound in terms of polynomials in $\polyS$, leveraging the fact that
	\begin{align*}
		\Delta\left(\bM(0)\right) \leq \max_{\alpha_1\indet+\alpha_0\in S}|\alpha_0| \text{ and }\Delta\left(\bM(-1)\right) \leq \max_{\alpha_1\indet+\alpha_0\in S}|\alpha_1-\alpha_0|.
	\end{align*}
	Using Lemma~\ref{lemma_bounded_subdeterminants} we can show the following reduction from $\ILP(\bM(\intvar),\bb,\bc)$ to integer linear programs with constraint matrix $\bM(0)_{\bv}$.
	
	\begin{lemma}
		\label{lemma_reduction}
		Let $S\subseteq \polyring$ be finite and $\max_{s\in S} \deg(s) = 1$. Let $\polyM = \bM(0) + \indet\cdot\bu\cdot\bv^\top$ be totally $\polyS$-modular and $\intvar\in\Z$ such that $\bM(\intvar)$ has full column rank. Then one can solve $\ILP(\bM(\intvar),\bb,\bc)$ in polynomial time if one can solve
		\begin{align*}
			\max \bc^\top\bx, \text{ s.t. } \bM(0)\bx\leq\tilde{\bb}, \ \bv^\top\bx = y, \ \bx\in\Z^n
		\end{align*}
		in polynomial time for all $\tilde{\bb}\in\Z^{m}$ and $y\in\Z$.
	\end{lemma}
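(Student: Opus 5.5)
The reduction enumerates the possible values of $\bv^\top\bx$. Write $\bM(\intvar)\bx = \bM(0)\bx + \intvar\cdot\bu\cdot(\bv^\top\bx)$. If we fix $y=\bv^\top\bx$, the constraint $\bM(\intvar)\bx\leq\bb$ becomes $\bM(0)\bx\leq \bb-\intvar y\bu=:\tilde{\bb}$ together with $\bv^\top\bx=y$. So $\ILP(\bM(\intvar),\bb,\bc)$ is the maximum over $y\in\Z$ of the stated subproblem. It then suffices to show that only polynomially many values of $y$ (in the encoding size) need to be tried. If $\bu=\zero$ or $\bv=\zero$, then $\bM(\intvar)=\bM(0)$ and a single call with $y$ free is not available; instead we note that $\bv^\top\bx$ is then irrelevant, or we restrict to a single $y=0$ when $\bv=\zero$. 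When $\bu=\zero$, $\bM(\intvar)=\bM(0)$ is totally unimodular-like with bounded $\Delta$, and we can still enumerate $y$ as below, since $\Delta(\bM(0)_\bv)\le\Delta(\bM(0))\cdot\|\bv\|_\infty$-type bounds hold trivially. So the main case is $\bu,\bv\neq\zero$.

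First solve the LP relaxation $\max\{\bc^\top\bx:\bM(\intvar)\bx\leq\bb\}$ in polynomial time. If it is infeasible, so is the ILP. If it is unbounded, then, since the constraint matrix is rational and of full column rank, the ILP is either infeasible or unbounded; feasibility can be decided by the same enumeration applied with $\bc=\zero$. If the LP has an optimal vertex $\bx^*$, we use a proximity argument. Because the feasible region in $(\bx)$ is the union over $y$ of the slices, an optimal integer solution can be chosen close to an optimum of the relaxation $\max\{\bc^\top\bx:\bM(0)\bx\le\bb-\intvar y\bu,\ \bv^\top\bx=y\}$ with $y$ continuous. Equivalently, consider the lifted system in variables $(\bx,y)$ with constraint matrix built from $\bM(0)$, $\intvar\bu$ and the row $\bv^\top$. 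Its relevant subdeterminants are controlled by $\Delta(\bM(0)_{\bv})$, which Lemma~\ref{lemma_bounded_subdeterminants} bounds by $2\Delta(\bM(0))\Delta(\bM(-1))$, a constant depending only on $\polyS$.

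The key step applies the Cook--Gerards--Schrijver--Tardos proximity theorem~\cite{CGST1986} to $\max\{\bc^\top\bx:\bM(0)\bx\le\tilde\bb,\ \bv^\top\bx=y\}$ for rational $y$. We parametrize by $y$ and note that the optimal integer $\bx$ lies within $\ell_\infty$-distance $n\Delta(\bM(0)_\bv)$ of an LP optimum for the same $y$. From this we derive that it suffices to try integers $y$ with $|y-\bv^\top\bx^*|\le n\|\bv\|_1\cdot 2\Delta(\bM(0))\Delta(\bM(-1))$. Alternatively, and more cleanly, we apply proximity directly to the original system $\bM(\intvar)\bx\le\bb$. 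This is the main obstacle, because $\Delta(\bM(\intvar))$ grows with $|\intvar|$, so one must argue through the lifted matrix $\left(\begin{smallmatrix}\bM(0)&\intvar\bu\\ \bv^\top&-1\end{smallmatrix}\right)$. Its subdeterminants involving the last column scale with $\intvar$, but the proximity statement only needs the $\bx$-part, where the bounding subdeterminants come from $\bM(0)_\bv$. I would first try to show, via Cramer's rule on the lifted system, that conformal decompositions of $\bx-\bz$ use kernel directions whose $\bx$-components are bounded by $\Delta(\bM(0)_\bv)$. Then the number of candidate $y$ is at most $2n\|\bv\|_1\Delta(\bM(0)_\bv)+1$, which is polynomial because $\bv$ has entries bounded in terms of $\polyS$. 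We solve each subproblem with $\tilde\bb=\bb-\intvar y\bu$ and return the best solution.
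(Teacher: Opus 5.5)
\textbf{There is a genuine gap at the central step.} Your reduction to slices ($y=\bv^\top\bx$, $\tilde\bb=\bb-\intvar y\bu$) matches the paper's. You also correctly locate the crux: you need a window for $y$ whose length does not depend on $\intvar$, and plain Cook et al.\ proximity for $\bM(\intvar)$ cannot give one. Neither of your two routes produces such a window.

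The slice-wise argument is a non sequitur. Proximity between LP and integer optima \emph{inside} a fixed slice $\bv^\top\bx=y$ says nothing about how far the optimal integer slice is from $y^*=\bv^\top\bx^*$.

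The lifted route aims at the wrong quantity, and the statement you propose to prove is false. The $\bx$-coordinates of the Cramer kernel vectors are minors of $\bD_I$ that still contain the column carrying $\intvar\bu$, so they grow with $\intvar$. Concretely, take $\bM=\left(\begin{smallmatrix}1&0\\ x&1\end{smallmatrix}\right)$, $\bu=(0,1)^\top$, $\bv=(1,0)^\top$. The lifted rows $(1,0,-1)$ and $(0,1,\intvar)$ have kernel vector $(1,-\intvar,1)$, while $\Delta(\bM(0)_{\bv})=1$. So any $\ell_\infty$ bound on $\bx$-differences obtained this way depends on $\intvar$. Your $\|\bv\|_1$-based window then has length of order $n|\intvar|$, which is not polynomial in the encoding length of $\intvar$.

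The missing idea is to bound only the $y$-coordinate, i.e.\ proximity in the direction $\be^{n+1}$. This is what the paper does, via the generalized Cook et al.\ bound of \cite{celayakuhlpaatweis2023proxandflatness} with $\balpha=\be^{n+1}$. Use the standard cone argument for the lifted matrix $\bD=\left(\begin{smallmatrix}\bM(0)&\intvar\bu\\ \bv^\top&-1\\ -\bv^\top&1\end{smallmatrix}\right)$, and take each extreme-ray generator $\bm{g}$ to be the integral Cramer kernel vector of $n$ linearly independent rows $\bD_I$. Its last coordinate is $\pm\det$ of $\bD_I$ with the $y$-column removed. That is an $n\times n$ minor of $\bM(0)$ stacked with $\pm\bv^\top$, so $|g_{n+1}|\le\Delta_n(\bM(0)_{\bv})$ regardless of $\intvar$. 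Rounding $\bz-(\bx^*,y^*)=\sum_i\lambda_i\bm{g}_i$ as usual then yields an optimal integer point with $|y^*-\alpha^*|<n\,\Delta_n(\bM(0)_{\bv})$, with no $\|\bv\|_1$ factor. Combined with Lemma~\ref{lemma_bounded_subdeterminants}, this leaves $\mathcal{O}(n)$ values of $y$. Once this bound is in place, your treatment of the unbounded and infeasible cases (enumerating around any feasible point) goes through as in the paper.
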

	
	Using these two lemmas, we can prove Theorem~\ref{thm_opt_polyring}.
	
%	\begin{lemma}
%		\label{lemma_linear_independence_evaluation}
%		Let $S\subseteq \polyring$ be finite and $\max_{s\in S} \deg(s) = 1$. Let $\polyM$ be totally $\polyS$-modular. Suppose that $\bM(\intvar)$ has full column rank over $\Z$ for some $\intvar\in\Z$. Then $\polyM$ has full column rank over $\polyring$.
%	\end{lemma}

	\begin{proof}[Proof of Theorem~\ref{thm_opt_polyring}]
		Suppose that $\bu\neq \bm{0}$ and $\bv\neq \bm{0}$, otherwise $\polyM$ is totally unimodular and the claim follows. It suffices to be able to solve integer linear programs with constraint matrix $\bM(0)_{\bv}$ by Lemma~\ref{lemma_reduction}. We can bound the largest determinant of $\bM(0)_{\bv}$ using Lemma~\ref{lemma_bounded_subdeterminants}. %since $\polyM$ has full column rank by Lemma~\ref{lemma_linear_independence_evaluation}. 
		This gives us $\Delta\left(\bM(0)_{\bv}\right)\leq 2$. This implies the claim since integer linear programs with totally bimodular constraint matrix can be solved in polynomial time; see \cite{artweiszenbimodalgo2017}.
	\end{proof}
	
%	\subsection{Results on linear independence}
%	We will use the following results that follow from the matrix determinant lemma, Lemma~\ref{lemma_matrix_determinant_lemma}.
%	\begin{lemma}
%		\label{lemma_linear_independence_equivalence}
%		Let $S\subseteq \polyring$ be finite and $\max_{s\in S} \deg(s) = 1$. Let $\polyM$ be totally $\polyS$-modular. Then $\polyM$ has full column rank over $\polyring$ if and only if at least one of $\bM(0)$ or $\bM(-1)$ has full column rank over $\Z$.
%	\end{lemma}
	
	\subsection{Proof of Lemma~\ref{lemma_reduction}}
	We substitute $y := \bv^\top\bx$ in $\ILP(\bM(\intvar),\bb,\bc)$, which results in  
	\begin{align*}
		\max \bc^\top\bx, \text{ s.t. } \bM(0)\bx + \intvar\cdot\bu \cdot y \leq \bb, \ \bv^\top\bx = y, \ \bx\in\Z^n.
	\end{align*}
	Since we only care for integral solutions $\bx$, we can additionally assume that $y = \bv^\top\bx$ is an integer as $\bv$ is an integral vector. This allows us to treat $y$ as an additional integer valued variable. So the resulting integer linear program is defined by the inequality system
	\begin{align}
		\label{inequ_proof_lemma_two}
		\underbrace{
			\begin{pmatrix}
				\bM(0) & \intvar\cdot\bu \\
				\phantom{-}\bv^\top & -1 \\
				-\bv^\top & \phantom{-}1
		\end{pmatrix}}_{=:\bD}
		\begin{pmatrix}
			\bx \\ y
		\end{pmatrix}
		\leq \begin{pmatrix}
			\bb \\ 0 \\ 0
		\end{pmatrix}.
	\end{align}
	We next argue that optimal integral solutions can only attain certain values of $y$. Let $P\subseteq \R^{n+1}$ denote the feasible region given by the inequality system (\ref{inequ_proof_lemma_two}). Observe that $\bD$ has full column rank. This is indeed true, because $(n+1)\times (n+1)$ subdeterminants of $\bD$ that contain the last row are equal to $n\times n$ subdeterminants of $\bM(\intvar)$, which has full column rank by assumption. 
	
	Suppose there exists an optimal solution $(\bx^*,y^*)^\top$ to $\max_{(\bx,y)^\top\in P} \bc^\top\bx$.
	Since $\bD$ has full column rank, we may further assume that this solution is a vertex. 
	This vertex can be computed in polynomial time by solving a linear program. 
	We want to argue that there is an optimal integer solution of $\max_{(\bx,y)^\top\in P} \bc^\top\bx$ close to $(\bx^*,y^*)^\top$, where the distance is measured in the variable $y$ or, equivalently with respect to the $(n+1)$\textsuperscript{st} unit vector $\be^{n+1}$. We apply the proximity bound by Cook et al. \cite{CGST1986} in its more general form as outlined in \cite{celayakuhlpaatweis2023proxandflatness} with $\balpha = \be^{n+1}$. That guarantees the existence of some optimal integer solution $(\bz^*,\alpha^*)^\top\in P$ with  
%	We may further assume this solution is a vertex, because $\polyM$ has full column rank and full rank subdeterminants that contain the last row are equal to full rank subdeterminants of $\bM(\intvar)$. The full rank property follows then from Lemma~\ref{lemma_linear_independence_evaluation}. 
%	This vertex can be computed in polynomial time by solving a linear program. 
%	We want to argue that there is an optimal integer solution of $\max_{(\bx,y)^\top\in P} \bc^\top\bx$ close to $(\bx^*,y^*)^\top$, where the distance is measured in the variable $y$ or, equivalently with respect to the $(n+1)$\textsuperscript{st} unit vector $\be^{n+1}$. We apply the proximity bound by Cook et al. \cite{CGST1986} that guarantees the existence of some optimal integer solution $(\bz^*,\alpha^*)^\top\in P$ with  
	\begin{align*}
		\left|y^* - \alpha^*\right| = |(\bx^*,y^*)^\top\be^{n+1} - (\bz^*,\alpha^*)^\top\be^{n+1}| < n\cdot \Delta_n(\bD/(\be^{n+1})^\top)=n\cdot\Delta_n(\bM(0)_{\bv})
	\end{align*}
	provided that $P\cap\Z^{n+1}\neq \emptyset$. Hence, it suffices to check integer linear programs where $y\in \Z$ is restricted to integers that are contained in the interval 
	\begin{align}
		\label{proof_opt_interval}
		y^* + \left[- n\cdot\Delta_n\left(\bM(0)_{\bv}\right),n\cdot\Delta_n\left(\bM(0)_{\bv}\right)\right].
	\end{align}
Since $\bM(0)_{\bv}$ has bounded subdeterminants, see Lemma~\ref{lemma_bounded_subdeterminants}, there are only $\mathcal{O}(n)$ many choices of $y$. Therefore we can solve $\ILP(\bM(\intvar),\bb,\bc)$ by solving $\mathcal{O}(n)$ integer linear programs of the type given in the statement.

It remains to consider the case where $\max_{(\bx,y)^\top\in P} \bc^\top\bx$ is unbounded. If we are in this situation, then, as the constraint matrix $\bD$ has full column rank, we can apply the Cook et al. bound as before. Specifically, given \emph{any} feasible point $(\bx^*,y^*)^\top\in P$, we solve one integer linear program for each choice of $y\in \Z$ in the interval \eqref{proof_opt_interval}. 
If a solution $(\bz^*,\alpha^*)^\top\in P\cap\Z^{n+1}$ is found, then we know that our original integer linear program is unbounded. This is because we are in the unbounded setting, so there exists an integer vector $\br$ for which $(\bz^*,\alpha^*)^\top+\lambda\br\in P$ for all $\lambda\geq 0$, and $\bc^\top\br>0$. Otherwise, if no such solution $(\bz^*,\alpha^*)^\top$ is found, then we conclude that $P\cap\Z^{n+1}= \emptyset$.
\section{A Connection to Bimodular Matrices}
\label{sec_bimodular}
Theorem~\ref{thm_bimodular_matrix_projection} relates the family of $\pm\{0,1,x,x+1,2x+1\}$-modular matrices to bimodular matrices with at least two unimodular projections. We now give a proof of this theorem, and explore one example of a matrix from this family.

\subsection{Proof of Theorem~\ref{thm_bimodular_matrix_projection}}

\begin{lemma}
\label{lemma_matrix_as_projection}
Let $\bA$ be a full row rank $\polyS$-modular matrix, where $S\subseteq \polyring$ consists of linear forms only. Assume that at least one maximal subdeterminant of $\bA$ is not an integer, and that the greatest common divisor of the maximal subdeterminants of $\bA$ is $1$. Let $B$ be a basis of $\bA$ such that $s:=cx+d:=\det\bA_B$ and $\gcd(c,d)=1$. Let $a,b\in\Z$ so that $ad-bc=1$, and let $s'=ax+b$. Then there exists a full row rank integer matrix $\bT$ and non-parallel, non-zero integral vectors $\bp,\bq$ such that $\bA=\bT/(s'\bp+s\bq)$. 
\end{lemma}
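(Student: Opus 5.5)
We plan to use the Möbius substitution $\varphi$ attached to the unimodular matrix $\begin{pmatrix} a & b\\ c& d\end{pmatrix}$. Since $ad-bc=1$, it sends the pair of linear forms $(s',s)=(ax+b,cx+d)$ to a basis of linear forms over $\Z$. Concretely, consider $y:=s'/s=\frac{ax+b}{cx+d}$. Every linear form $\alpha x+\beta$ can be written uniquely as $\lambda s'+\mu s$ with $\lambda,\mu\in\Z$, because the change of basis is unimodular. Dividing by $s$ gives $(\alpha x+\beta)/s=\lambda y+\mu$. Hence every maximal subdeterminant of $\bA$, divided by $s$, is an integer-coefficient polynomial of degree at most one in $y$.

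The first step is to normalise. Replace $\bA$ by $\bA':=\bA_B^{-1}\bA$, viewed as a matrix over $\mathbb{Q}(x)$; its maximal minors are $\det\bA_{I}/s$. Next apply the substitution $\psi:\mathbb{Q}(x)\to\mathbb{Q}(x)$ that inverts $x\mapsto y$, i.e.\ $x\mapsto -\frac{dx-b}{cx-a}$. After substitution, the maximal minors of $\bA'':=\psi(\bA')$ become $\lambda_I x+\mu_I$ with integers $\lambda_I,\mu_I$, by Lemma~\ref{lemma_determinant_homomorphism}. In addition, $\bA''_B=\id$. The Plücker vector of $\bA''$ therefore has the form $x\bP+\bQ$ with $\bP,\bQ$ integer vectors indexed by $\binom{[n]}{m}$.

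The second step is to realise this as a projection. Because $\bA''_B=\id$, every entry of $\bA''$ is, up to sign, a maximal minor of $\bA''$, hence a linear form in $x$. Write $\bA''=\bQ_0+x\bR$ with integer matrices $\bQ_0$ and $\bR$. Lemma~\ref{lemma_rank_coefficient_matrix} gives $\rank\bR\le1$. Its proof works for maximal minors, since an $r\times r$ block of $\bR$ extends to a maximal minor using columns of $B$, so this applies here. Writing $\bR=\bu\bv^\top$, the matrix $\bA''=\bQ_0+x\bu\bv^\top$ is exactly the type handled in the proof of Lemma~\ref{lemma_bounded_subdeterminants}. There, with $\bT$ built from $\bu,\bQ_0,\id$ and $\bp=(1,\zero)$, $\bq=(1,\bv)$, one has $\bT/(\bp+x\cdot\text{(shift)})$ realising $\bQ_0+x\bu\bv^\top$. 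More cleanly, we set
\[
\bT:=\begin{pmatrix} \zero^\top & \bv^\top\\ \id & \bQ_0\end{pmatrix}\ \text{(rearranged)},\qquad \bw=\be^1+x\begin{pmatrix}0\\ \bu\end{pmatrix}\ \text{or similar}.
\]
We then verify by multilinearity that $\det(\bw\;\bT_I)$ equals the minors of $\bA''$. Doing this rigorously, and in particular checking that $\bT$ has full row rank and that the resulting $\bp,\bq$ are non-zero and non-parallel, is the delicate step. Non-parallelism should follow from the hypothesis that some minor of $\bA$ is not an integer, which forces $\bu\bv^\top\ne0$. The gcd-one hypothesis is what lets us work with integral $\bT$.

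The third step is to pull back. By Lemma~\ref{lemma_matrix_projection_variable_substitution}, applying $\varphi$ (i.e.\ $x\mapsto y=\frac{ax+b}{cx+d}$) to the identity $\bA''=\bT/(x\bp+\bq)$ gives $\bA'=\bT/(y\bp+\bq)$. Since $\bT$ is an integer matrix it is unchanged by the substitution. Scaling the vector by $s$ scales all minors by $s$, so
\[
\bT/(s'\bp+s\bq)=\bT/(s(y\bp+\bq))
\]
has minors $s\det\bA'_I=\det\bA_I$. Thus it agrees with $\bA$ up to a determinant-one row operation, which is exactly what $\bA=\bT/(s'\bp+s\bq)$ asserts.

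The main obstacle is the second step: producing a genuinely integral full row rank $\bT$ whose projection has exactly the Plücker vector $x\bP+\bQ$. It is not enough to match up to scaling or up to a rational transformation, and this is where the gcd condition must be used.
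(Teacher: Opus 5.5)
Your skeleton is the paper's: pass to $\bA_B^{-1}\bA$, apply $x\mapsto-\frac{dx-b}{cx-a}$ to get $\bA''$ with $\bA''_B=\id$ and integral linear-form minors, write $\bA''=\bm{Q}_0+x\bu\bv^\top$ via Lemma~\ref{lemma_rank_coefficient_matrix}, realise it as a projection of an integer matrix, and pull back with Lemma~\ref{lemma_matrix_projection_variable_substitution}. Your first and third steps are fine.

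The gap is the second step, which you leave as ``or similar''. The one concrete candidate you give does not work. The $\id$ block borrowed from Lemma~\ref{lemma_bounded_subdeterminants} makes $\bT/\bw$ have $m+n$ columns instead of $n$. In addition, with $+\bv^\top$ on top, the remaining columns give $\bm{Q}_0-x\bu\bv^\top$ rather than $\bA''$. That identity block was there to turn all subdeterminants of a totally modular matrix into maximal minors. Here $\bA''$ already carries $\id$ in the columns $B$, so the block is superfluous. The right choice is
\[
\bT:=\begin{pmatrix}-\bv^\top\\ \bm{Q}_0\end{pmatrix},\qquad \bp:=\begin{pmatrix}0\\ \bu\end{pmatrix},\qquad \bq:=\be^1 .
\]
A Schur complement on the pivot $1$ then gives $\det(x\bp+\bq\ \ \bT_I)=\det(\bm{Q}_{0,I}+x\bu\bv_I^\top)=\det\bA''_I$ exactly. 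So exact matching and integrality of $\bT$ are not the obstacle you fear, and the gcd hypothesis plays no role in them.

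The hypotheses actually enter through non-degeneracy, and there your claim is false: a non-integral minor alone does not force $\bu\bv^\top\neq\zero$. If $\bu\bv^\top=\zero$, then $\bA=\bA_B\bm{Q}_0$, so every maximal minor is an integer multiple of $s$. Such multiples are non-integral as soon as $s$ is non-constant. For example, $\bA=\begin{pmatrix}x&0&x\\0&1&1\end{pmatrix}$ with $B=\{1,2\}$ meets every hypothesis except $\gcd=1$, yet $\bA_B^{-1}\bA$ is already integral. The correct argument splits into two cases:
\begin{itemize}
\item if $s=\pm1$, the non-integral minor gives the contradiction;
\item if $s\neq\pm1$, then $s$ divides every maximal minor, contradicting $\gcd=1$.
\end{itemize}
Hence $\bu,\bv\neq\zero$. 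You then still need two checks you did not supply. First, $\bp,\bq$ are non-zero and non-parallel because $\bu\neq\zero$. Second, $\bT$ has full row rank: comparing coefficients in $\bA''_B=\id$ gives $\bu\bv_B^\top=\zero$, hence $\bv_B=\zero$, and also $\bm{Q}_{0,B}=\id$. So the columns $B\cup\{j\}$ with $v_j\neq 0$ give a minor $\pm v_j\neq 0$.
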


\begin{proof}
First, note that $\bA_B^{-1}\bA$ is $\polyS'$-modular where
\[
\polyS':=\left\{\frac{a'x+b'}{cx+d}:a'x+b'\in\polyS\right\}.
\]
Let $\hat{\bA}=(\bA_B^{-1}\bA)(-\frac{dx-b}{cx-a})$. Since $ad-bc=1$, we have that $\hat{\bA}$ is $\polyS''$-modular where
\[
\polyS'':=\left\{(a'd-b'c)x+(ab'-a'b):a'x+b'\in\polyS\right\}\subseteq\polyring.
\]
In fact $\hat{\bA}$ is totally $\polyS''$-modular, since $\hat{\bA}_B$ is an identity matrix. So we may apply Lemma~\ref{lemma_rank_coefficient_matrix} and obtain $\hat{\bA}=\hat{\bA}(0)+x\bu\bv^\top$ for integral $\bu$ and $\bv^\top$. Note that $\hat{\bA}(0)$ is an integral matrix.

We show both $\bu$ and $\bv^\top$ are non-zero. We have $\bA^{-1}_B\bA=\hat{\bA}(s'/s)$, and therefore $\bA=\bA_B(\hat{\bA}(s'/s))$. If either $\bu$ or $\bv^\top$ are zero then $\hat{\bA}=\hat{\bA}(0)$, hence $\bA=\bA_B(\hat{\bA}(0))$. If $s=1$, we obtain a contradiction of the assumption that at least one maximal subdeterminant of $\bA$ is not an integer. If $s\neq 1$, then $s$ divides every maximal subdeterminant of $\bA$, but this is a contradiction of the assumption that the greatest common divisor of the maximal subdeterminants of $\bA$ is 1. We conclude $\bu$ and $\bv^\top$ must both be non-zero.

Define the integer matrix
\[
\left(\bp\ \bq\ \bT\right):=\left(\begin{array}{ccc}
0 & 1 & -\bv^{\top}\\
\bu & \zero & \hat{\bA}(0)
\end{array}\right).
\]
Now, $\bu$ is non-zero, which implies $\bv^\top_B$ is zero as $\hat{\bA}_B$ is an identity matrix. As $\bv^\top$ is non-zero we conclude that $\bT$ has full row rank. Because $\bu\neq\zero$, we have that $\bp$ and $\bq$ are non-zero and non-parallel. By construction we have $\hat{\bA}=\bT/(x\bp + \bq)$. Hence by Lemma~\ref{lemma_matrix_projection_variable_substitution} we have
\[
\bA^{-1}_B\bA=\hat{\bA}(s'/s)=\bT/((s'/s)\bp + \bq),
\]
and from this we conclude $\bA=\bT/(s'\bp + s\bq)$.
\end{proof}

%\begin{proposition}We have $\bA$ is $S'$-modular if and only if there exists a full row rank bimodular matrix $\bT$ and non-zero, non-parallel integer vectors $\bu,\bw$ such that $\bT/(\bu-\bw)$, $\bT/\bu$, and $\bT/(\bu+\bw)$ are unimodular and $\bA=\bT/(x\bu+(x+1)\bw)$.
%\end{proposition}
%Now suppose $\bA$ is $\polyS'$-modular. It remains to show that $\bA$ is trivially $\polyS'$-modular. For this it suffices to show that the matrix $\bT':=[\bu\ \bv\ \bT]$ is unimodular, and that there is no circuit of $\bT'$ (i.e. a minimal-support element of the kernel of $\bT'$) whose support contains the first two columns of $\bT'$. 

%By the above equation, we must have $\det(\bu\ \bT_I)\det(\bv\ \bT_I)=0$ for all $I$.

\begin{proof}[Proof of Theorem~\ref{thm_bimodular_matrix_projection}]
 Suppose $\bA$ is $S$-modular. Let $s:=cx+d\in\polyS$ be non-zero. Since $\gcd(c,d)=1$, there exist integers $a,b$ such that $ad-bc=1$. Let $s'=ax+b$. 
 By Lemma~\ref{lemma_matrix_as_projection} we have that there exists a full row rank matrix $\bT$ and non-zero, non-parallel integral vectors $\bp',\bq'$ such that $\bA=\bT/(s'\bp'+s\bq')$. Now let $\bp=(a-b)\bp'+(c-d)\bq'$ and $\bq=b\bp'+d\bq'$. Then $\bp,\bq$ are non-zero, non-parallel integral vectors such that $\bA=\bT/(x\bp + (x+1)\bq)$. By Lemma~\ref{lemma_matrix_projection_variable_substitution}, we have $\bA(0)=\bT/\bq$ and $\bA(-1)=\bT/(-\bp)$, and since $\polyS(0)=\polyS(-1)=\{-1,0,1\}$, we get by Lemma~\ref{lemma_totally_S_r_modular} that $\bT/\bp$ and $\bT/\bq$ are unimodular. By Lemma~\ref{lemma_bound_subdeterminants_matrix_projection}, then, $\bT$ is a bimodular matrix.
 
Conversely, suppose $\bT$ is a full row rank bimodular matrix and $\bp,\bq$ are non-zero, non-parallel so that $\bT/\bp$ and $\bT/\bq$ are unimodular. Let $\bA=\bT/(x\bp+(x+1)\bq)$. Let $I$ be a set of $m$ columns of $\bA$, where $m$ is the number of rows of $\bA$. We have
\[
\det(\bA_I)=\det(x\bp+(x+1)\bq\quad\bT_I)=x\det(\bp\ \bT_I)+(x+1)\det(\bq\ \bT_I).
\]
We have $\det(\bp\ \bT_I),\det(\bq\ \bT_I)\in\{-1,0,1\}$, which yields precisely 9 possible values for $\det(\bA_I)$. One can check that these values are precisely the elements of $S$.
\end{proof}

\subsection{An Example}
Let $\polyS=\pm\{0,1,x,x+1,2x+1\}$. We finish this section with an example to show that the $\polyS$-modular matrices can be understood as a family of matrices which sits properly between the unimodular matrices and bimodular matrices.

Consider the matrix
\[
\bA:=\begin{pmatrix}
1 & 1 & 1 &   &   &   & \phantom{-}1 & \phantom{-}1 & \phantom{-}1 &   &   &   & 1 \\
1 &   &   & 1 & 1 &   & -1 &   &   & \phantom{-}1 & \phantom{-}1 &   & 1 \\
  & 1 &   & 1 &   & 1 &   & -1 &   & -1 &   & \phantom{-}1 & 1 \\
  &   & 1 &   & 1 & 1 &   &   & -1 &   & -1 & -1 & 1
\end{pmatrix}.
\]
This matrix is $\pm\{0,2,4\}$-modular. Hence, if $B$ is a basis of $\bA$ for which $\det\bA_B=\pm 2$, then the matrix $\bT:=\bA_B^{-1}\bA$ is bimodular.

It can be shown that for all non-zero $\bp\in\Z^4$, the matrix $(\bT\; \bp)$ is bimodular if and only if $\bp$ already appears up to sign as a column in $\bT$. This means that $\bT/\bp$ can only be unimodular if $\bp$ already appears up to sign as a column in $\bT$. However, one can verify with a straightforward computation that there is no column $\bp$ of $\bT$ for which $\bT/\bp$ is unimodular. We conclude that $\bT$ is an example of a bimodular matrix which does not admit even a single unimodular projection, let alone two. 

Now consider the columns
\[
\left(\bp\; \br\; \bq\; \bs \right)=\bA_B^{-1}\begin{pmatrix}
1 & \phantom{-}1 &   & \\
1 & -1 &   &  \\
  &  & 1 & \phantom{-}1 \\
  &  & 1 & -1
\end{pmatrix}.
\]
of $\bT$. Let $\bT'$ be the matrix obtained from $\bT$ by removing either column $\br$ or column $\bs$. One can compute that $\bT'$ is bimodular but not unimodular, however both $\bT'/\bp$ and $\bT'/\bq$ are unimodular. Hence $\bT'/(x\bp + (x+1)\bq)$ is $\polyS$-modular. All elements of $\polyS$ are attained (up to sign) as maximal subdeterminants of this matrix, as shown in the following table:

\begin{center}
\renewcommand{\arraystretch}{1.2}
\begin{tabular}{p{4cm}|p{1cm} p{1cm} p{1cm} p{1cm} p{1cm}}
\raggedleft $\left|s\right|$: & \centering $0$ & \centering $1$ & \centering $x$ & \centering $x+1$ & \centering $2x+1$\tabularnewline
\hline 
\raggedleft number of occurences: & \centering 76 & \centering 36 & \centering 48 & \centering 48 & \centering 12\tabularnewline
\end{tabular}
\end{center}

\section{Open Directions}
There are several interesting directions that would allow us to generalize the range of applications of the framework introduced in this paper. 
A first step relates to Section~\ref{sec_smallest_nontrivial_cases}. We believe that one could establish a theory of what sets $\polyS\subseteq \polyring$ of polynomials are interesting because of their combinatorial or algebraic properties. For instance, for our most general set $\polyS = \pm \lbrace 0,1,\indet,\indet + 1, 2\indet + 1\rbrace$ of allowed determinants, it seems reasonable to add a determinant that is currently attained by a forbidden submatrix, i.e., an element in $F(\polyS)$.

So far all recognition algorithms in Section~\ref{sec_recognition} are based on characterizations that relate totally $\polyS$-modular matrices to totally unimodular matrices. In light of the celebrated Robertson-Seymour theorem \cite{ROBERTSON2004325}, a first step to establish results beyond Seymour's decomposition could be to classify for which sets $\polyS$ the number of forbidden submatrices is finite. Initial computations show that this is true for totally $\pm\lbrace 0,\indet,\indet + 1\rbrace$-modular matrices \cite[Section~1.1]{celayakuhlmannweismantel2024matricespolynomial} or when $0\notin\polyS$ \cite{ARTMANN2016635}. However, this is not true for our general set $\polyS=\pm\lbrace 0,1,\indet,\indet+1,2\indet+1\rbrace$. Even with entries restricted to $\lbrace \indet,\indet + 1\rbrace$ we can find an infinite sequence of forbidden submatrices; see the first row of Figure \ref{fig_forbidden minors}. Interestingly, if one removes $\pm1$ from $\polyS$ and passes to $\pm\lbrace 0,\indet,\indet+1,2\indet+1\rbrace$, then we are only aware of finitely many forbidden minors; see the second row of Figure \ref{fig_forbidden minors}. It is open whether this list is complete. A positive answer might support the following conjecture for a finite set $\polyS\subseteq\polyring$ such that $s\in S$ implies $-s\in S$: there exists a finite list of forbidden minors if $\pm1\notin \polyS$. 

\begin{figure}
	\centering
	\includegraphics[scale=.75]{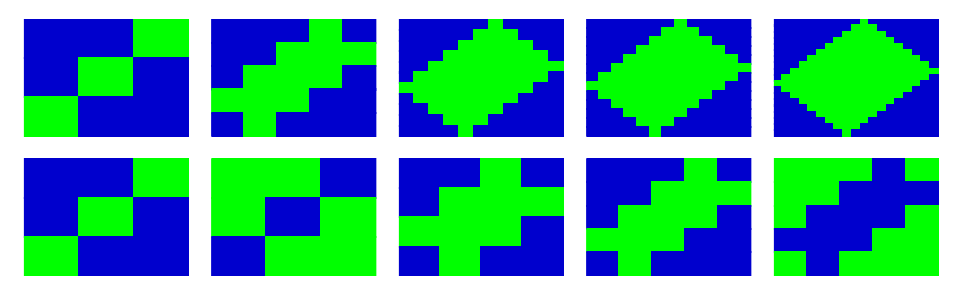}
	\caption{The blue boxes depict the value $\indet$ and the green boxes $\indet+1$ or vice-versa. The first row of matrices corresponds to the first five elements of an infinite sequence of matrices that can be obtained by generalizing the diamond pattern with dimension congruent to $3$ and $5$ modulo $8$. It can be shown that those infinitely many matrices are forbidden submatrices for $\pm\lbrace 0,1,\indet,\indet+1,2\indet+1\rbrace$.The matrices in the last row correspond to the only five forbidden submatrices for $\pm \lbrace 0,\indet,\indet+1,2\indet+1\rbrace$ that are known to us when $n\geq 3$.}
	\label{fig_forbidden minors} 
\end{figure}

The optimization result presented in this paper rely on Lemmas~\ref{lemma_bounded_subdeterminants} and \ref{lemma_reduction}, which reduce optimization over a totally $\pm\lbrace 0,1,\indet,\indet + 1,2\indet + 1\rbrace$-modular matrix to an optimization problem over a totally unimodular system with an additional constraint such that the resulting matrix is bimodular. A major hurdle in studying optimization problems for more general $\polyS$ is that currently there exists no polynomial time algorithm that solves integer linear programs with bounded subdeterminants beyond two. In view of Section~\ref{sec_bimodular}, where we show that $\pm\lbrace 0,1,\indet,\indet + 1,2\indet + 1\rbrace$-modular matrices arise as special bimodular matrices, it would be natural to consider analogous sets of polynomials that correspond to special families of $3$-modular matrices and allow for a decomposition into a totally unimodular matrix plus an additional row.

	\bibliography{references}

\end{document}